\newtheorem{theorem}{Theorem}[section]
\newtheorem{lemma}[theorem]{Lemma}
\newtheorem{proposition}[theorem]{Proposition}
\theoremstyle{definition}
\newtheorem{example}[theorem]{Example}
\newtheorem{definition}[theorem]{Definition}
\newtheorem{remark}[theorem]{Remark}
\DeclareMathOperator{\id}{id}
\DeclareMathOperator{\tr}{tr}
\newcommand{\cl}[1]{\mathcal{#1}}
\newcommand{\bb}[1]{\mathbb{#1}}
\newcommand{\bC}{\bb C}
\newcommand{\wh}[1]{\widehat{#1}}
\DeclareMathOperator{\Ad}{Ad}
\let\norm\nor
\newcommand{\la}{\langle}
\newcommand{\ra}{\rangle}
\newcommand{\vphi}{\varphi}
\newcommand{\om}{\omega}
\newcommand{\lm}{\lambda}
\newcommand{\Ltau}{\Lambda_\tau}
\newcommand{\BH}{\mathcal{B}(H)}
\newcommand{\N}{\mathbb{N}}
\newcommand{\Z}{\mathbb{Z}}
\newcommand{\R}{\mathbb{R}}
\let\C\outer\renewcommand{\C}{\mathbb{C}}
\newcommand{\ten}{\otimes}
\newcommand{\oten}{\overline{\otimes}}
\let\phi\varphi
\let\epsilon\varepsilon
\def\ket#1{| #1 \rangle}
\def\bra#1{\langle #1 |}
\def\kb#1#2{|#1\rangle\!\langle #2 |}
\newcommand{\mc}[1]{\mathcal{#1}}
\newcommand{\BLTM}{\mc{B}(L^2(M,\tau))}
\let\mpar\marginpar
\def\marginpar#1{\mpar{#1}\ignorespaces}
\begin{document}

\title[]{Quantum teleportation in the commuting operator framework}

\author[A. Conlon]{Alexandre Conlon$^{1}$}

\author[J. Crann]{Jason Crann$^{1}$}
\address{$^1$School of Mathematics \& Statistics, Carleton University, Ottawa, ON, Canada H1S 5B6}
\email{alexandreconlon@cmail.carleton.ca}
\email{jasoncrann@cunet.carleton.ca}

\author[D. W. Kribs]{David W. Kribs$^{2,3}$}
\address{$^2$Department of Mathematics \& Statistics, University of Guelph, Guelph, ON, Canada N1G 2W1}
\address{$^3$Institute for Quantum Computing, University of Waterloo, Waterloo, ON, Canada N2L 3G1}
\email{dkribs@uoguelph.ca}

\author[R. H. Levene]{Rupert H. Levene$^{4,5}$}
\address{$^4$School of Mathematics and Statistics, University College Dublin, Belfield, Dublin 4, Ireland}
 \address{$^5$Centre for Quantum Engineering, Science, and Technology, University College Dublin, Belfield, Dublin 4, Ireland}

\email{rupert.levene@ucd.ie}


\subjclass[2010]{46L10, 46L30, 46N50, 47L90, 81P40, 81P45, 81R15}

\keywords{quantum teleportation, local operations and classical communication, von Neumann algebra, subfactors, quantum graphs, quantum chromatic numbers}

\begin{abstract} We introduce a notion of teleportation scheme between subalgebras of semi-finite von Neumann algebras in the commuting operator model of locality. Using techniques from subfactor theory, we present unbiased teleportation schemes for relative commutants $N'\cap M$  of a large class of finite-index inclusions $N\subseteq M$ of tracial von Neumann algebras, where the unbiased condition means that no information about the teleported observables are contained in the classical communication sent between the parties. For a large class of subalgebras $N$ of matrix algebras $M_n(\C)$, including those relevant to hybrid classical/quantum codes, we show that any tight teleportation scheme for $N$ necessarily arises from an orthonormal unitary Pimsner-Popa basis of $M_n(\C)$ over $N'$, generalising work of Werner \cite{w}. Combining our techniques with those of Brannan-Ganesan-Harris \cite{bgh}, we compute quantum chromatic numbers for a variety of quantum graphs arising from finite-dimensional inclusions $N\subseteq M$.
\end{abstract}

\maketitle

\section{Introduction}

Quantum teleportation \cite{betal1}, the transfer of qubits between two separated parties using preshared entanglement and local operations and classical communication, is a quintessential protocol in quantum information. It and its variants are used in a multitude of scenarios, including quantum error correction \cite{betal2}, quantum cryptography \cite{gr} and universal quantum computation \cite{gc}. 

Mathematically, the protocol involves three single qubit quantum systems, $A_0\otimes A_1 \otimes B = \mathbb C^2 \otimes \mathbb C^2 \otimes \mathbb C^2$, with Alice ($A$) having access to the first two systems and Bob ($B$) having access to the third. The composite system $A_1 B$ is entangled in the (maximally entangled) Bell state $\ket{\beta_{00}} = \frac{1}{\sqrt{2}}(\ket{00} + \ket{11})$. The entire two qubit Bell basis for $A_1B$ is then obtained as $\ket{\beta_{ij}} = (1 \otimes X^i Z^j)\ket{\beta_{00}}$, for $0 \leq i,j \leq 1$ and $X,Z$ denoting the usual single qubit Pauli operators (in this case acting on the third qubit). One can then verify that for an arbitrary single qubit $\ket{\psi}\in \mathbb C^2$, we have 
\[
\ket{\psi}\ket{\beta_{00}} = \frac12 \big(  \ket{\beta_{00}} \ket{\psi} +   \ket{\beta_{01}}(X \ket{\psi})  +  \ket{\beta_{10}}(Z\ket{\psi})  +  \ket{\beta_{11}} (XZ\ket{\psi})  \big). 
\]
Thus, $A$ proceeds by measuring the first two qubit composite system $A_0A_1$ in its Bell basis $\{\ket{\beta_{ij}}\}$, which is given as a quantum measurement on the combined three qubit system by the family of projection operators $P_{ij}\otimes 1$, $0 \leq i,j \leq 1$, with $P_{ij} = \kb{\beta_{ij}}{\beta_{ij}}$ on $A_0A_1$ and $1$ the identity operator on $B$. The party $A$ then communicates the result $(i,j)$ of this measurement to $B$, who then implements the unitary reversal $(X^i Z^j)^*$ on the third system to obtain the state $\ket{\psi}$, and this completes the protocol.

Using the maximially entangled state $|\psi_n\ra:=\frac{1}{\sqrt{n}}\sum_{i=0}^{n-1}|ii\ra\in\C^n\ten\C^n$ and the generalised Pauli $X$ and $Z$ operators on $\C^n$, the procedure generalises verbatim to states in $\C^n$. This latter protocol was put into the larger context of \textit{teleportation schemes} by Werner \cite{w}, which allowed for broader possible implementations by the parties. Specifically, a teleportation scheme for $\C^n$ consists of a triple $(\om,\{F_i\}_{i\in I}, \{T_i\}_{i\in I})$ where $\om$ is a density on $\C^n\ten \C^n$ (entangled resource state), $\{F_i\}_{i\in I}$ is a positive operator-valued measure (POVM) on $\C^n\ten\C^n$ (Alice's measurement system) and $\{T_i\}_{i\in I}$ are unital completely positive (UCP) maps on $M_n(\C)$ (Bob's quantum channels in the Heisenberg picture) for which
$$\tr(\rho B)=\sum_{i\in I} \tr((\rho\ten \om)(F_i\ten T_i(B))), \ \ \ \forall \ \rho, B\in M_n(\C).$$
The scheme $(\om,\{F_i\}_{i\in I}, \{T_i\}_{i\in I})$ is \textit{tight} \cite{w} if $|I|=n^2$, that is, the amount of classical signals communicated coincides with the dimension of the algebra to be teleported: in the Heisenberg picture, Bob's observable algebra ($M_n(\C)$) is teleported to Alice's local observable algebra. Werner established a correspondence between tight teleportation schemes for $M_n(\C)$ and unitary error bases of $M_n(\C)$ \cite{w}, that is, orthonormal bases of unitaries with respect to the (normalised) Hilbert-Schmidt inner product (a prime example given by the generalised Pauli operators; see Example \ref{ex:1}). 

Rather than teleportation of a full system, one can imagine scenarios in which it is desirable to teleport quantum information encoded into subsystems of a full system or even hybrid forms of classical and quantum information. This could arise for instance with subsystem codes used in quantum algorithms and fault tolerant quantum computing architectures \cite{aliferis2007subsystem,bacon2006operator,klappenecker2008clifford,kribs2005unified,oreshkov2009fault,poulin2005stabilizer} or hybrid codes used for the simultaneous transmission of classical and quantum information in communication schemes \cite{bkk1,bkk1a,cao2021higher,devetak2005capacity,grassl2017codes,kremsky2008classical,kuperberg2003capacity,nemec2018hybrid,nemec2021infinite}. Such scenarios admit convenient mathematical descriptions in the Heisenberg picture, and as such, can be studied through generalisations of teleportation schemes to the commuting operator framework, in which locality is modelled by commuting algebras of observables~\cite{hk}. Indeed, the mathematical origins of quantum theory \cite{vN} together with recent advances in non-local games (e.g., \cite{dpp,musath,jietal,jungeetal,ozawa,slofstra}) and the increasing number of connections between quantum information and quantum field theory (e.g., \cite{adh,cnn,gk,harlow,hs}) continue to motivate the study/extension of central results in quantum information to the commuting operator framework. Examples include entropy theory (e.g., \cite{bfs,ggpp,gjl,hiaif1,hiaif2,lx}), quantum error correction (e.g., \cite{bkk2,cklt2}), the theory of local operations (e.g., \cite{cklt,gjl2,vw}) and entanglement in quantum field theory (e.g., \cite{hs} and the references therein).

In this paper we continue this line of work by introducing a general notion of teleportation scheme in the setting of semi-finite von Neumann algebras and studying analogues of tightness and related properties in this setting. Our main examples utilize subfactor theory  and deepen the connection between standard teleportation of observables in $M_n(\C)$ and Jones' basic construction for the inclusion $\C\subseteq M_n(\C)$ (see, e.g., \cite{huang,zhang} and Examples \ref{ex:3.3} and \ref{ex:3.4}). In particular, in section 3 we establish unbiased teleportation protocols for relative commutants $N'\cap M$ of (finite-index) inclusions $N\subseteq M$ which admit orthonormal Pimsner-Popa bases in the unitary normaliser of $N$. See Theorem \ref{t:tele} for details.

Restricting attention to the tripartite system $M_n(\C)\ten M_n(\C)\ten M_n(\C)$, in section 4 we generalise Werner's characterization of tight teleportation schemes \cite{w}, showing that, for a large class of (unital $*$-)subalgebras $N\subseteq M_n(\C)$, any tight (and faithful) teleportation scheme of $N$ between the first and third subsystems necessarily arises from a unitary orthonormal Pimsner-Popa basis for $M_n(\C)$ over $N'$. See Theorem \ref{t:Werner} for a precise statement. Our result applies, in particular, to homogeneous subalgebras $N$, which model hybrid classical/quantum codes in the Heisenberg picture \cite{bkk1,cao2021higher,grassl2017codes,nemec2018hybrid,nemec2021infinite}.

It is known that unitary error bases generate quantum-to-classical graph homomorphisms \cite{bgh,mrv}, and therefore can be used to estimate/calculate various chromatic numbers of quantum graphs~\cite{bgh}. Based on our generalised teleportation schemes and techniques from \cite{bgh}, in section 5 we calculate various chromatic numbers for quantum graphs arising from inclusions $N\subseteq M$ on a finite-dimensional Hilbert space $H$. In particular, when $N$ is a factor, we show that 
$$\chi_q(N',M,\mc{B}(H))=\chi_{qc}(N',M,\mc{B}(H))=[M:N],$$
where $[M:N]$ is the index of $N$ and $M$, and $\chi_q$ and $\chi_{qc}$ are the quantum and quantum commuting chromatic numbers of the quantum graph $(N',M,\mc{B}(H))$, respectively. This generalises the case $N=\C$ established in \cite{bgh}. Also, when the inclusion $N\subseteq M$ admits an orthonormal Pimsner-Popa basis in the unitary normaliser of $N$, we show that 
$$\chi_{loc}(M,N',\BLTM)=\chi_{q}(M,N',\BLTM)=\chi_{qc}(M,N',\BLTM)=[M:N],$$
where $\chi_{loc}$ is the local chromatic number. 

Several natural lines of investigation are suggested by this work, including connections with weak Hopf $C^*$-algebras and the structure of depth-2 subfactors \cite{nv}, as well as diagrammatic representations of teleportation in monoidal categories \cite{ac,jlw,lwj}. The final outlook section elaborates on these and other connections left for future work.

\section{Preliminaries}

In this section we outline relevant preliminaries from the theory of tracial von Neumann algebras, including Jones' basic construction, Pimsner-Popa bases, and notions of entanglement in the commuting operator framework.

\subsection{Jones' Basic Construction}
Let $M$ be a tracial von Neumann algebra, that is, a finite von Neumann algebra with fixed normal faithful tracial state $\tau$. Inclusions of von Neumann subalgebras $N\subseteq M$ will always be assumed unital. 

The GNS construction of $(M,\tau)$ yields the Hilbert space $L^2(M,\tau)$, the GNS map $\Lambda_\tau:M\rightarrow L^2(M,\tau)$ and the (faithful) representation $\pi_\tau:M\rightarrow\cl B(L^2(M,\tau))$, where
$$\pi_\tau(x)\Ltau(y)=\Ltau(xy), \ \ \ x,y\in M.$$
We often simply write $x$ for $\pi_\tau(x)$ (and $M$ for $\pi_\tau(M)$) when convenient. The inner product on $L^2(M,\tau)$ satisfies
$$\la\Ltau(x),\Ltau(y)\ra=\tau(y^*x), \ \ \ x,y\in M.$$
The adjoint operation in $M$ yields a conjugate linear isometry $J$ on $L^2(M,\tau)$ via
$$J\Ltau(x)=\Ltau(x^*), \ \ \ x\in M.$$
The associated (anti)representation of right multiplication $\pi_\tau^r:M\rightarrow\cl B(L^2(M,\tau))$ is given by
$$\pi_\tau^r(x)\Ltau(y)=\Ltau(yx), \ \ \ x,y\in M.$$
One easily sees that $\pi_\tau^r(x)=J\pi_\tau(x)^*J$ for every $x$ in $M$, and that $\pi_\tau^r$ is $*$-preserving by traciality. It follows that $\pi_\tau(M)'=\pi_\tau^r(M)=J\pi_\tau(M)J$ (see, e.g., \cite[Theorem  V.2.22]{t1}), that is, $M'=JMJ$ in $\cl B(L^2(M,\tau))$.

For a von Neumann subalgebra $N\subseteq M$, let $L^2(N,\tau)=\overline{\Ltau(N)}$ be the associated closed subspace of $L^2(M,\tau)$. The orthogonal projection $e_N$ onto $L^2(N,\tau)$ induces the unique $\tau$-preserving faithful normal conditional expectation $E_N:M\rightarrow N$ via
$$e_N\Ltau(x)=\Ltau(E_N(x)),\ \ \ x\in M.$$
(See \cite[Theorem IX.4.2]{t2} for a more general result.) The projection $e_N$ is commonly known as the \textit{Jones projection} for the inclusion $N\subseteq M$. The von Neumann subalgebra $M_1:=\la M,e_N\ra$ of $\cl B(L^2(M,\tau))$ generated by $M$ and $e_N$ is the result of the basic construction of the inclusion $N\subseteq M$. We list some standard facts (see, e.g., \cite[\S3.1]{js}):
\begin{enumerate}
\item $e_N x e_N = E_N(x)e_N$, $x\in M$;
\item $E_N(axb)=aE_N(x)b$, $a,b\in N$, $x\in M$;
\item $e_N\in N'$;
\item $Je_N=e_NJ$;
\item $M_1=JN'J$.
\end{enumerate}
Property (5) together with the equality $M'=JMJ$ imply that $J(N'\cap M)J=M'\cap M_1$. It follows that the map
$$\gamma_0:=\pi^r_\tau|_{N'\cap M}:N'\cap M\ni x\mapsto Jx^*J\in M'\cap M_1$$
is an anti-isomorphism. 

The algebra $M_1$ has a canonical faithful semi-finite normal trace $\mathrm{tr}_1$ determined by $\mathrm{tr}_1(xe_Ny)=\tau(xy)$, $x,y\in M$ \cite[\S 1.1.2]{p}. The trace $\tau$ is \textit{Markov} for the inclusion $N\subseteq M$ if $\mathrm{tr}_1$ is finite, and $\tau_1:=\mathrm{tr}_1(1)^{-1}\mathrm{tr}_1$ has $\tau_1|_M=\tau$. In this case, we may iterate the basic construction to obtain $M\subseteq M_1\subseteq M_2$, where $M_2$ is the von Neumann subalgebra of $\cl{B}(L^2(M_1,\tau_1))$ generated by $M_1$ and $e_M$, the Jones projection for the inclusion $M\subseteq M_1$. As above, $M_2=J_{\tau_1}M'J_{\tau_1}$, and 
$$\gamma_1:=\pi^r_{\tau_1}|_{M'\cap M_1}:M'\cap M_1\rightarrow M_1'\cap M_2$$
is an anti-isomorphism. The composition 
$$\Gamma:=\gamma_1\circ\gamma_0:N'\cap M\rightarrow M_1'\cap M_2$$ 
is therefore a $*$-isomorphism, known as the \textit{canonical shift} (see e.g., \cite[Proposition 2.22]{jp}). One may continue to iterate the basic construction, yielding an increasing sequence of finite von Neumann algebras $N\subseteq M\subseteq M_1\subseteq \cdots$ known as the Jones tower of the inclusion $N\subseteq M$. In this paper we will only be concerned with the first two iterations. 

\subsection{Bases} Let $N\subseteq M$ be an inclusion of finite von Neumann algebras. A finite subset $B=\{\lm_i\mid i=1,...,d\}\subseteq M$ is a (left) \textit{Pimsner--Popa basis}, or simply \textit{basis}, for $M$ over $N$ if either of the following equivalent conditions hold:
\begin{enumerate}
\item $\sum_{i=1}^d \lm_i^*e_N\lm_i = 1$;
\item $x=\sum_{i=1}^d E_N(x\lm_i^*)\lm_i$ for all $x\in M$.
\end{enumerate}
When $E_N(\lm_i\lm_j^*)=\delta_{i,j}1$, we say that $\{\lm_i\}_{i=1}^d$ is an \textit{orthonormal basis} of $M$ over $N$ (compare with \cite{p}, wherein orthonormality allows $E_N(\lm_i\lm_i^*)$ to be a projection in $N$). In this case, $B$ forms an orthonormal basis of $M$ as a (left) Hilbert $N$-module, with respect to the $N$-valued inner product $\la x,y\ra_N=E_N(xy^*)$. This notion of basis was introduced in \cite{pp} in the setting of II$_1$-factors, and was later generalised (see, e.g., \cite{p,wat}).

Following the terminology of \cite{jp}, we call an inclusion $N\subseteq M$ \textit{strongly Markov} if the trace $\tau$ is Markov and there exists a finite Pimsner-Popa basis for $M$ over $N$. In this case, the element
$$\sum_{i}\lm_i^*\lm_i\in \R^+1$$
is independent of the Pimsner-Popa basis. Indeed, by \cite[\S 1.1.4]{p} $E_M(e_N)=\alpha1$ for some scalar $\alpha>0$ (where $E_M$ is the $\tau_1$-preserving conditional expectation $M_1\rightarrow M$), so that
$$\sum_{i}\lm_i^*\lm_i=\alpha^{-1}\sum_iE_M(\lm_i^*e_N\lm_i)=\alpha^{-1}1.$$
The associated scalar $\alpha^{-1}$ is the (Watatani) index $[M:N]$ of $M$ in $N$ \cite{wat}. Thus, we have
\begin{equation}\label{e:Markov} E_M(e_N)=[M:N]^{-1}1.\end{equation}
It follows that
\begin{equation}\label{e:TL} e_Ne_Me_N=[M:N]^{-1}e_N, \ \ \ e_Me_Ne_M=[M:N]^{-1}e_M, \ \ \ [M:N]=[M_1:M].\end{equation}
For example, a finite-index inclusion $N\subseteq M$ of II$_1$ factors is strongly Markov with  the index coinciding with the Jones index \cite{j}. Also, a connected (i.e., $Z(N)\cap Z(M)=\C$) inclusion $N\subseteq M$ of finite-dimensional $C^*$-algebras is strongly Markov \cite[Corollary 3.2.5]{js} and the index coincides with $\norm{\Lambda_N^M}^2$ \cite[Proposition 3.3.2]{js}, the square of the (operator) norm of the inclusion matrix $\Lambda_N^M$.

\begin{example}\label{ex:1} Let $N=\C\subseteq M$. Then $E_N=\tau(\cdot)1$ is the ``completely depolarising channel''. The Jones projection satisfies
$$e_N\Ltau(x)=\Ltau(E_N(x))=\tau(x)\Ltau(1)=\Ltau(1)\Ltau(1)^*\Ltau(x), \ \ \ x\in M.$$
Hence, $e_N=\Ltau(1)\Ltau(1)^*$. 

When $M=M_n(\C)$, $\Lambda_{\tau_n}(1)=n^{-1/2}\sum_{i=0}^{n-1}|ii\ra=:\psi_n$ is the maximally entangled state. In this case, any orthonormal basis of $M_n(\C)$ with respect to $\tau_n$ (i.e., the normalised Hilbert--Schmidt inner product) will form an orthonormal Pimsner--Popa basis for the inclusion $\C\subseteq M_n(\C)$. A natural choice is the image of the Weyl representation
$$W:\Z_n\times\Z_n\ni(k,l)\mapsto V^lU^k\in M_n(\C),$$
where $U$ and $V$ are the translation and multiplication operators associated with the standard basis $\{\ket{k}\}_{k\in \Z_n}$ of $\C^n$ (also known as generalised Pauli operators): 
\begin{equation}\label{e:U}U|k\ra=|k+1\ra, \ \ \ V|k\ra=e^{2\pi ik/n}|k\ra, \ \ \ k\in\Z_n.\end{equation}
One easily verifies that 
$$\tau_n(W(z')^*W(z))=\delta_{z',z}, \ \ \ z,z'\in\Z_n^2.$$

\end{example}

\begin{example}\label{ex:2} Let $N=\ell^\infty_n$ (diagonals) inside $M=M_n(\C)$. Then the translation operators $\{U^k\mid k\in\Z_n\}$ with $U$ defined as in Example \ref{ex:1} form an orthonormal basis for $M$ over $N$. Indeed, the decomposition
$$x=\sum_{k\in\Z_n} E_N(x(U^k)^*)U^k$$
corresponds to breaking $x$ into the sum of its diagonals. 

Another basis of $M_n(\C)$ over $\ell^\infty_n$ is $\{\frac{1}{\sqrt{n}}|\chi\ra\la\chi|\mid \chi\in\wh{\Z_n}\}$. This ``character basis'' of $M_n(\C)$ over $\ell^\infty_n$ has the nice property that each $|\chi\ra$ (suitably normalized) acts as a trace vector for $\ell^\infty_n$. 
\end{example}

\begin{example}\label{ex:3} The first part of Example \ref{ex:2} generalises naturally to crossed product inclusions. Let $G$ be a finite group acting by $\tau$-preserving automorphisms $\alpha_s$, $s\in G$, on a tracial von Neumann algebra $(M,\tau)$. The action induces a unital, injective $*$-homomorphism 
$$\alpha:M\ni x\mapsto (s\mapsto \alpha_{s^{-1}}(x))\in \ell^\infty(G,M),$$
where $\ell^\infty(G,M)=\ell^\infty(G)\ten M$ denotes the (bounded) $M$-valued functions on $G$. The crossed product $G\ltimes M$ is the von Neumann subalgebra of $\cl B(\ell^2(G))\oten M$ generated by $\alpha(M)$ and the (amplified) left regular representation $\lm(G)\ten 1$ under the covariance relations
$$(\lm_s\ten 1)\alpha(x)(\lm_s\ten 1)^*=\alpha(\alpha_s(x)), \ \ \ x\in M, \ s\in G,$$
where $\oten$ denotes the von Neumann algebra tensor product. The inclusion $\alpha(M)\subseteq G\ltimes M$ admits a canonical conditional expectation $E:G\ltimes M\rightarrow \alpha(M)$ satisfying
$$E\bigg(\sum_{s\in G}\alpha(x_s)(\lm_s\ten 1)\bigg)=\alpha(x_e), $$
for any collection $x_s$, $s\in G$ in $M$. In this case, $\{(\lm_s\ten 1)\mid s\in G\}$ is a basis of $G\ltimes M$ over $\alpha(M)$ via the usual ``Fourier'' decomposition 
$$X=\sum_{s\in G} E(X(\lm_s\ten 1)^*)(\lm_s\ten 1), \ \ \ X\in G\ltimes M.$$
\end{example}

\subsection{Entanglement}\label{s:ent} The entanglement of $\psi_n=n^{-1/2}\sum_{i=0}^{n-1}|ii\ra\in \C^n\ten\C^n$ is expressed through the relation 
\begin{equation}\label{eq:ME_n}\pi_{\tau_n}(x)\psi_n=(x\ten 1)\psi_n=(1\ten x^t)\psi_n=\gamma_0(x)\psi_n, \ \ \ x\in M_n(\C).\end{equation}
From a mathematical perspective, the heart of the standard teleportation protocol (see below) is the following double application of (\ref{eq:ME_n}):
\begin{equation}\label{eq:ME_n(2)}(|\psi_n\ra\la\psi_n|\ten 1)(x\ten 1\ten 1)(1\ten|\psi_n\ra\la\psi_n|)=(|\psi_n\ra\la\psi_n|\ten 1)(1\ten 1\ten x)(1\ten|\psi_n\ra\la\psi_n|), \ \ \ x\in M_n(\C).\end{equation}
Viewing $\psi_n$, or rather its associated density, as the Jones projection for $\C\subseteq M_n(\C)$, these manifestations of entanglement generalize naturally, as is well-known (see \cite[Lemma 2.4]{bisch}, for example).

\begin{lemma}\label{l:me} Let $N\subseteq M$ be an inclusion of finite von Neumann algebras. For any $x\in N'\cap M$, 
$$xe_N=\pi^r_\tau(x)e_N=\gamma_0(x)e_N.$$
\end{lemma}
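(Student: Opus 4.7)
The plan is to verify the identity on the dense subspace $\Lambda_\tau(M)\subseteq L^2(M,\tau)$. First observe that the second equality $\pi^r_\tau(x)e_N=\gamma_0(x)e_N$ is immediate from the definition $\gamma_0=\pi^r_\tau|_{N'\cap M}$ (which, via traciality, coincides with $x\mapsto Jx^*J$). So the only content is the first equality $xe_N=\pi^r_\tau(x)e_N$.

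For arbitrary $y\in M$, I would compute both sides on $\Lambda_\tau(y)$. Using the defining relation $e_N\Lambda_\tau(y)=\Lambda_\tau(E_N(y))$, together with the formulas for $\pi_\tau$ and $\pi^r_\tau$,
\begin{align*}
xe_N\Lambda_\tau(y) &= \pi_\tau(x)\Lambda_\tau(E_N(y)) = \Lambda_\tau(xE_N(y)),\\
\pi^r_\tau(x)e_N\Lambda_\tau(y) &= \pi^r_\tau(x)\Lambda_\tau(E_N(y)) = \Lambda_\tau(E_N(y)x).
\end{align*}
Since $E_N(y)\in N$ and $x\in N'\cap M$, the two elements $xE_N(y)$ and $E_N(y)x$ agree in $M$, so the two operators agree on $\Lambda_\tau(y)$. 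Because $\Lambda_\tau(M)$ is dense in $L^2(M,\tau)$ and both $xe_N$ and $\pi^r_\tau(x)e_N$ are bounded, the identity extends to all of $L^2(M,\tau)$.

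There is essentially no obstacle here: the argument is a one-line application of the commutation $x\in N'$ against $E_N(y)\in N$, together with the defining property of the Jones projection. The only mild subtlety is bookkeeping the two pictures of $\gamma_0(x)$ (as $\pi^r_\tau(x)$ and as $Jx^*J$), but this was already established in the preceding discussion.
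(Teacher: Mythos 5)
Your argument is correct and is essentially identical to the paper's proof: both evaluate $xe_N$ and $\pi^r_\tau(x)e_N$ on $\Lambda_\tau(y)$, use $e_N\Lambda_\tau(y)=\Lambda_\tau(E_N(y))$, and commute $x\in N'\cap M$ past $E_N(y)\in N$. Your explicit remarks about density of $\Lambda_\tau(M)$ and the identification $\gamma_0=\pi^r_\tau|_{N'\cap M}$ are just making precise what the paper leaves implicit.
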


\begin{proof} For any $y\in M$,
\begin{align*} xe_N\Ltau(y)&=x\Ltau(E_N(y))=\Ltau(xE_N(y))=\Ltau(E_N(y)x)\\
&=\pi_\tau^r(x)\Ltau(E_N(y))=\gamma_0(x)e_N\Ltau(y).
\end{align*}
\end{proof}

\begin{lemma}\label{l:me2} Let $N\subseteq M$ be a strongly Markov inclusion. For any $x\in N'\cap M$, 
$$e_Nxe_M=e_N\Gamma(x)e_M.$$
\end{lemma}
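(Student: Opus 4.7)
The plan is to chain together two applications of the preceding Lemma \ref{l:me}, one for the inclusion $N\subseteq M$ and one for the inclusion $M\subseteq M_1$, sandwiched at the intermediate step by the observation that $\gamma_0(x)$ lives in $M'\cap M_1$.

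First I would pass from the identity $xe_N=\gamma_0(x)e_N$ of Lemma \ref{l:me} to its ``left'' counterpart $e_N x=e_N\gamma_0(x)$ for $x\in N'\cap M$. Since $N'\cap M$ is a $*$-subalgebra and $\gamma_0$ is $*$-preserving (being the restriction to $N'\cap M$ of the anti-representation $\pi_\tau^r$, equivalently $\gamma_0(x^*)=JxJ=\gamma_0(x)^*$), applying Lemma \ref{l:me} to $x^*\in N'\cap M$ and taking adjoints yields $e_N x=e_N\gamma_0(x)$. Multiplying on the right by $e_M$ gives
\[
e_N x e_M = e_N \gamma_0(x) e_M.
\]

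Next I would apply Lemma \ref{l:me} to the inclusion $M\subseteq M_1$ in place of $N\subseteq M$. This is legitimate because strong Markovness of $N\subseteq M$ ensures $\tau_1$ is a finite trace on $M_1$, so $M\subseteq M_1$ is an inclusion of finite von Neumann algebras with Jones projection $e_M$ and associated anti-isomorphism $\gamma_1:M'\cap M_1\to M_1'\cap M_2$. Since $\gamma_0(x)\in M'\cap M_1$, the lemma gives $\gamma_0(x)e_M=\gamma_1(\gamma_0(x))e_M=\Gamma(x)e_M$. Combining with the previous display yields the desired identity.

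I don't anticipate any serious obstacle: the statement is essentially a bookkeeping exercise in which Lemma \ref{l:me} is applied twice in succession, and the only substantive verification is the ``left-sided'' version $e_N x=e_N\gamma_0(x)$, which follows by symmetry from the already-proven right-sided form together with $*$-preservation of $\gamma_0$. The mild subtlety worth flagging in the write-up is making explicit that strong Markovness is exactly what allows the second application of Lemma \ref{l:me} to the tower $M\subseteq M_1$.
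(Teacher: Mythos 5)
Your proof is correct and follows essentially the same route as the paper: take the adjoint of the identity in Lemma \ref{l:me} (using that $\gamma_0=\pi_\tau^r|_{N'\cap M}$ is $*$-preserving) to get $e_Nx=e_N\gamma_0(x)$, then apply Lemma \ref{l:me} to the inclusion $M\subseteq M_1$ with $\gamma_0(x)\in M'\cap M_1$ to conclude $\gamma_0(x)e_M=\Gamma(x)e_M$. Your write-up just makes explicit the adjoint step and the role of the strong Markov hypothesis, both of which the paper leaves implicit.
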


\begin{proof} Taking the adjoint of the relation from Lemma \ref{l:me} we have $e_Nx=e_N\gamma_0(x)$ with $\gamma_0(x)\in M'\cap M_1$. Applying Lemma \ref{l:me} to the inclusion $M\subseteq M_1$, we have $\gamma_0(x)e_M=\gamma_1(\gamma_0(x))e_M=\Gamma(x)e_M$. The result follows.
\end{proof}

Lemma \ref{l:me} implies that any unit vector $\psi\in L^2(N,\tau)$ is a perfectly correlated/EPR state with respect to the commuting algebras $N'\cap M$ and $M'\cap M_1$, meaning that any self-adjoint $x\in N'\cap M$ has an ``EPR double'' $x'\in M'\cap M_1$ for which
$$\la (x-x')^2\psi,\psi\ra=0.$$
Indeed, by Lemma~\ref{l:me}, for $x'=\gamma_0(x)$,
\[\la(x-x')^2\psi,\psi\ra=\|(x-x')\psi\|^2=\|(x-x')e_N\psi\|^2=0.\]
For details on perfect correlation see \cite{av} for the type I case and \cite{w99} for the general von Neumann algebraic setting (both works of course building on the seminal paper \cite{epr} of Einstein--Podolski--Rosen). More generally, if a unitary $u$ belongs to the normaliser 
$$\cl N_M(N):=\{u\in \mc{U}(M)\mid u^*Nu=N\},$$
then $u(N'\cap M)u^*=N'\cap M$ and $u^*\psi$ is also an EPR state with respect to the same commuting algebras:
\begin{equation}\label{e:normal}\gamma_0(uxu^*)u^*\psi=u^*\gamma_0(uxu^*)\psi=u^*\gamma_0(uxu^*)e_N\psi=u^*(uxu^*)\psi=xu^*\psi, \ \ \ x\in N'\cap M.\end{equation}
In other words, the EPR double of $x\in N'\cap M$ relative to the state $u^*\psi$ is $\gamma_0(uxu^*)\in M'\cap M_1$. Moreover, the restricted vector state $\om_{u^*\psi}|_{N'\cap M}$ is tracial, which is often viewed as a form of maximal entanglement in the commuting operator framework (see e.g., \cite[\S V.A]{keylsw} or \cite[\S6]{cklt}). Explicitly, for $x,y\in N'\cap M$, we have
\begin{align*}\om_{u^*\psi}(xy)&=\la xyu^*\psi,u^*\psi\ra=\la x\gamma_0(uyu^*)u^*\psi,u^*\psi\ra\\
&=\la\gamma_0(uyu^*)xu^*\psi,u^*\psi\ra=\la xu^*\psi,\gamma_0(uy^*u^*)u^*\psi\ra\\
&=\la xu^*\psi,y^*u^*\psi\ra=\la yxu^*\psi,u^*\psi\ra.
\end{align*}       

\section{Teleportation Schemes for Semi-Finite von Neumann Algebras}

The standard teleportation protocol fits naturally into the framework of the basic construction for $\C\subseteq M_n(\C)$. Indeed, iterating the construction gives
$$\underbrace{\C}_{N}\subseteq \underbrace{M_n(\C)}_{M}\subseteq \underbrace{M_n(\C)\ten M_n(\C)}_{M_1}\subseteq \underbrace{M_n(\C)\ten M_n(\C)\ten M_n(\C)}_{M_2}.$$
From the operator algebraic perspective, the observable algebra of the global system of the protocol is $M_2$, while Alice and Bob's observable algebras are $M_1\cong M_n(\C)\ten M_n(\C)\ten 1$ and $M_1'\cap M_2\cong 1\ten 1\ten M_n(\C)$, respectively. The first Jones projection $e_N\in M_1$ is the (rank-1 projection onto the) maximally entangled state $\psi_n$, while the second Jones projection $e_M\in M_2$ is $1\ten|\psi_n\ra\la\psi_n|$ (since $M=M_n(\C)\ten1$ when viewed as a subalgebra of $M_1=M_n(\C)\ten M_n(\C)$). Note that $e_M$ is precisely the entangled resource shared by Alice and Bob. Let $B=\{u_i\}_{i=1}^{n^2}$ be any unitary orthonormal basis of $M_n(\C)$ with respect to the normalised Hilbert--Schmidt inner product. Then $B$ is a Pimsner--Popa basis for $\C\subseteq M_n(\C)$, and Alice's local measurement in the associated teleportation protocol is with respect to the projection-valued measure (PVM) $\{P_i:=(u_i^*\ten 1)e_N(u_i\ten 1)\}\subseteq M_1$. Bob's operations are local conjugations by the $u_i$ on the third tensor factor, i.e., automorphisms of $M_1'\cap M_2$.

In the Heisenberg picture, the (standard) teleporation identity associated to the unitary basis $B$ is 
$$x=\sum_{i=1}^{n^2} (\id\ten\tr\ten\tr)((1\ten \psi_{\tau_n}\psi_{\tau_n}^*)(P_i\ten u_ixu_i^*)), \ \ \ x\in M_n(\C).$$
From this perspective, Bob's observable algebra $M_1'\cap M_2$ is teleported to Alice's local algebra $M=M_n(\C)\ten 1\ten 1=N'\cap M$. Recasting the above identity inside $M_2$,
$$x\ten 1\ten 1=n^2\sum_{i=1}^{n^2} E_{M}(e_M(P_i\ten u_ixu_i^*))=\Gamma^{-1}(1\ten 1\ten x),$$
where $E_{M}=(\id\ten\tau_n\ten\tau_n)$ is the unique trace-preserving conditional expectation from $M_2$ onto $M$, and $\Gamma$ is the canonical shift. Thus, from this perspective, the celebrated teleportation identity is an LOCC implementation of (the inverse of) the canonical shift associated to the inclusion $\C\subseteq M_n(\C)$. This observation, together with the framework of \cite{w} motivated the definition to follow. We first recall the commuting model of local operations and classical communication (LOCC) recently developed in \cite{cklt}.

Given two commuting von Neumann subalgebras $A$ and $B$ of an ambient von Neumann algebra $M$, a (one-way, right) LOCC operation is a normal, unital completely positive (UCP) map $\Phi:M\rightarrow M$ of the form $\Phi=\sum_{i=1}^\infty S_i\circ T_i$ (point weak*-convergent), where $S_i:M\rightarrow M$ is a normal CP $B$-bimodule map satisfying $S_i(A)\subseteq A$, and $T_i:M\rightarrow M$ are normal UCP $A$-bimodule maps satisfying $T_i(B)\subseteq B$. (Note that the invariance conditions $S_i(A)\subseteq A$ and $T_i(B)\subseteq B$ were automatic in \cite{cklt} since they considered $M=\BH$, and $B=A'$, in which case bimodularity implies invariance.) We also recall that $A\vee B$ denotes the von Neumann algebra generated by $A\cup B$.

\begin{definition}\label{d:tele}
Let $A$ and $B$ be two commuting von Neumann subalgebras of a tracial von Neumann algebra $(M,\tau)$. Suppose that $A$ contains von Neumann subalgebras $A_0$, $A_1$, for which there exist anti-isomorphisms $\gamma_0:A_0\rightarrow A_1$ and $\gamma_1:A_1\rightarrow B$. A \textit{teleportation scheme for $A_0$ relative to $A,B\subseteq  A\vee B$} consists of the following: 
\begin{itemize}
\item[$\bullet$] a $\tau$-density operator $\omega$ in $A_0'\cap M$;
\item[$\bullet$] a collection $\{T_i\}_{i\in I}$ of normal UCP $A$-bimodule maps on $A\vee B$ for which $T_i(B)\subseteq B$, $i\in I$.
\item[$\bullet$] a POVM $\{ F_i \}_{i\in I}$ in $A$ such that $\sum_{i\in I}\Ad(F_i)\circ T_i$ is a one-way right LOCC map relative to $A,B\subseteq A\vee B$, and 
\begin{equation}
\sum_{i\in I} E_{A_0}(F_iT_i(\Gamma(a))\om)=a,  \ \ \ a\in A_0,\end{equation}
where $\Gamma:A_0\rightarrow B$ is the $*$-isomorphism $\Gamma=\gamma_1\circ\gamma_0$, and $E_{A_0}$ is the $\tau$-preserving conditional expectation from $M$ onto $A_0$, and the series converges in the weak* topology.
\end{itemize}
The scheme $(\om,\{T_i\}_{i\in I},\{F_i\}_{i\in I})$ is 
\begin{itemize}
\item \textit{faithful} if $\tau(F_i\rho\om)>0$ for all $i\in I$ and $\tau$-densities $\rho\in A_0$. 
\item \textit{minimal} if $\om\in A_1\vee B$ and $\{F_i\}_{i\in I}\subseteq A_0\vee A_1$. 
\end{itemize}
When $|I|<\infty$, the scheme $(\om,\{T_i\}_{i\in I},\{F_i\}_{i\in I})$ is 
\begin{itemize}
\item[$\bullet$] \textit{tight} if  $\dim(A_0)=|I|$, 
\item[$\bullet$] \textit{unbiased} if $\tau(F_i\rho\om)=|I|^{-1}$ for all $i\in I$ and $\tau$-densities $\rho\in A_0$.
\end{itemize}
\end{definition}

\begin{remark}${}$
\begin{enumerate}
\item As in \cite{w}, tightness means the amount of classical signals sent from Alice to Bob equals the dimension of the algebra to be teleported.
\item Minimality implies that the resource state $\omega$ lives in the ``smallest'' algebra possessing a density which entangles $A_1$ and $B$. Similar remark for Alice's POVM $\{F_i\}$.  
\item The unbiasedness property ensures that for any input state $\rho$ from $A_0$, Alice's local measurement result is uniformly random, so that no information about $\rho$ is contained in the classical information sent to Bob.
\item Definition \ref{d:tele} readily generalises to semi-finite von Neumann algebras $(M,\tau)$ with a normal conditional expectation onto $A_0$. For instance, when the restriction of the (normal semi-finite faithful) trace $\tau$ to $A_0$ is semi-finite (in which case there is a unique normal $\tau$-preserving conditional expectation from $M$ onto $A_0$ by Takesaki's theorem \cite[Theorem IX.4.2]{t2}). One can also envision similarly defined schemes beyond semi-finite von Neumann algebras, although we will not pursue them in this paper.
\end{enumerate}
\end{remark}

\begin{example}\label{ex:3.3} Let $M=M_n(\C)\ten M_n(\C)\ten M_n(\C)$, 
$$A_0=M_n(\C)\ten 1\ten 1, \ A_1=1\ten M_n(\C)\ten 1, \ B=1\ten 1\ten M_n(\C).$$
We recover the standard teleportation scheme with
\begin{itemize}
\item $\om=n^2(1\ten|\psi_n\ra\la\psi_n|)\in A_1\vee B$ (the factor $n^2$ is to ensure normalisation with respect to $\tau_n\ten\tau_n$);
\item $T_i=\mathrm{Ad}(1\ten 1\ten u_i)$, where $\{u_i\}_{i=1}^{n^2}$ is any unitary orthonormal basis of $M_n(\C)$;
\item $\{F_i=(u_i^*\ten 1)|\psi_n\ra\la\psi_n|(u_i\ten 1)\}_{i=1}^{n^2}\subseteq A_0\vee A_1$.
\end{itemize}
\end{example}

\begin{example}\label{ex:3.4} Recently, Huang studied teleportation in a II$_1$-factor setting, showing the existence of certain (finite-dimensional) matrix subalgebras and analogues of the standard teleportation protocol for them \cite{huang}. We sketch how his protocol fits into our general framework, referring the reader to \cite[\S IV]{huang} for details.

Let $M$ be a II$_1$-factor on $H$ with cyclic and separating trace vector $\psi\in H$, that is, $\tau(x)=\la x\psi,\psi\ra$, $x\in M$ (where $\tau$ is the unique trace of $M$). Let $N=M'$, and let $\gamma_1(x)=Jx^*J$ denote the canonical anti-isomorphism between $M$ and $N$, where $J$ is the modular conjugation associated to $(M,\psi)$. The global system is $N\oten\mc{B}(H)$, where Alice's observable algebra is $N\ten M$ and Bob's is $1\ten N$. For a natural number $n\in\N$, Huang takes a PVM $\{P_j\}_{j=0}^{n-1}$ in $M$ consisting of equivalent projections satisfying $\tau(P_j)=\frac{1}{n}$ for all $j$ (which exists in II$_1$-factors), and defines an associated family $\{V_{j,k}\mid j,k=0,...,n-1\}\subseteq M$ of partial isometries which form matrix units for a subalgebra $A_1$ of $M$ isomorphic to $M_n(\C)$ \cite[Equation (6)]{huang}. Let $W_{j,k}=\gamma(V_{k,j})$,  
$$\Psi_{j,k}=\frac{1}{N}\sum_{\mu,\nu}e^{\frac{2\pi i k(\nu-\mu)}{n}}W_{\mu+j,\nu+j}\ten V_{\mu,\nu}, \ \ \ \textnormal{and} \ \ \  U_{j,k}=\sum_{\mu,\nu=0}^{n-1}e^{\frac{2\pi i k\nu}{n}}\delta_{\mu,\nu+j}V_{\mu,\nu},$$
where addition of the indicies is modulo $n$. Then $\{\Psi_{j,k}\mid j,k=0,...,n-1\}$ is PVM in $N\ten M$ (Alice's algebra) and $U_{j,k}$ are unitaries in $M$. With $A_0=B=\mathrm{span}\{W_{j,k}\mid j,k=0,...,n-1\}\subseteq N$, 
$$\gamma_0:A_0\ten 1\ni a\ten 1\mapsto 1\ten \gamma(a)\in 1\ten A_1,$$
and $\Gamma=(\id\ten\gamma_1)\circ\gamma_0:A_0\ten 1\rightarrow 1\ten B$, one can show that
\begin{equation}\label{e:huang}\sum_{j,k=0}^{n-1}(\id\ten\om_{\psi})(\Psi_{j,k}(\id\ten\Ad(\gamma(U_{j,k})))(\Gamma(a)))=a, \ \ \ a\in A_0.\end{equation}
Thus, Huang's protocol fits into our framework for semi-finite von Neumann algebras viewing $(\id\ten\om_{\psi})$ as a normal conditional expectation $N\oten\mc{B}(H)\rightarrow N$ (and further composing with the unique normal trace-preserving conditional expectation $E_{A_0}:N\rightarrow A_0$ on either side of (\ref{e:huang})).
\end{example}

Given the discussion at the beginning of the section, it is natural to consider teleportation schemes for more general inclusions $N\subseteq M$ in connection with the basic construction, in particular, when $N$ and/or $M$ are not necessarily factors. We first examine the natural ``direct sum protocol'' when $N=\C$, and $M$ is finite-dimensional. 

Suppose $M\cong\bigoplus_{j=1}^m M_{n_j}(\C)$. The (unique) Markov trace $\tau$ for the inclusion $\C\subseteq M$ is 
$$\tau = \frac{1}{\dim M} \sum_{j=1}^m n_j\tr_{n_j},$$
where $\tr_{n_j}$ is the unnormalized trace on $M_{n_j}(\C)$. It follows that $L^2(M,\tau)\cong\bigoplus_{j=1}^m \C^{n_j}\ten\C^{n_j}$, and that $M$ is represented on $L^2(M,\tau)$ as $M=\bigoplus_{j=1}^m M_{n_j}(\C)\ten 1_{n_j}$. Note that since $N=\C$, we have $M_1=JN'J=\cl B(L^2(M,\tau))$. The anti-isomorphism $\gamma_0:M\rightarrow M'$ is then simply
$$\gamma_0(\oplus_j x_j\ten 1_{n_j})=\oplus_j 1_{n_j}\ten x_j^t,$$
where $t$ denotes transposition.

As $M_1=\cl{B}(L^2(M,\tau))$, the extended trace $\tau_1$ is the unique normalised trace on $\cl{B}(L^2(M,\tau))$, and $L^2(M_1,\tau_1)\cong L^2(M,\tau)\ten L^2(M,\tau)$, with $M_1$ (and hence $M$) acting on the first tensor leg. The conjugation $J_{1}$ is then the tensor flip (plus complex conjugation), so we have 
$$M_2=J_{1}(M\ten 1)'J_{1}=J_{1}(M'\ten \cl{B}(L^2(M,\tau)))J_{1}=\cl{B}(L^2(M,\tau))\ten M'.$$


In the Proposition below, we view $M_2=\cl{B}(L^2(M,\tau))\ten M'$ as a tripartite system, consisting of the (anti-)isomorphic commuting subalgebras $M\ten 1$, $M'\ten 1$ and $1\ten M'$. Alice has access to the first two, and Bob has access to the third. Here, the pertinent trace is $\tau_2:=\tau_1\ten\tau'$, where $\tau'(y)=\tau(Jy^*J)$, $y\in M'$.

\begin{proposition}\label{p:directsum} Let $M$ be a finite-dimensional $C^*$-algebra, and $M_1$ and $M_2$ be the result of the iterated basic construction of the inclusion $\C\subseteq M$ with respect to the Markov trace $\tau$. Then there exists a tight, minimal teleportation scheme $(\om,\{T_i\}_{i=1}^d,\{F_i\}_{i=1}^d)$ for $M$ relative to
$$\cl{B}(L^2(M,\tau))\ten 1, \ 1\ten M'\subseteq (M_2,\tau_2).$$
\end{proposition}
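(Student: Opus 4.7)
My plan is to implement the scheme blockwise, running Werner's standard teleportation of each simple summand $M_{n_j}(\C)$ in parallel, with the Jones projection $e_M\in M_2$ playing the role of the global shared entangled resource. Write $M=\bigoplus_{j=1}^m M_{n_j}(\C)$ with minimal central projections $z_j\in Z(M)=Z(M')$ (these are fixed by $\gamma_0$, i.e.\ $\gamma_0(z_j)=z_j$, by centrality), so that $L^2(M,\tau)=\bigoplus_j H_j$ with $H_j\cong\C^{n_j}\ten\C^{n_j}$, $M=\bigoplus_j M_{n_j}(\C)\ten 1_{n_j}$, and $M'=\bigoplus_j 1_{n_j}\ten M_{n_j}(\C)$.

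For the data, fix for each $j$ a unitary orthonormal basis $\{u_i^{(j)}\}_{i=1}^{n_j^2}$ of $(M_{n_j}(\C),\tau_{n_j})$ (Example \ref{ex:1}), set $\lm_{(j,i)}:=\sqrt{\dim M/n_j^2}\,u_i^{(j)}z_j\in M$ (the resulting family is a normalised Pimsner--Popa basis for $\C\subseteq M$), and let $\tilde v_{(j,i)}:=u_i^{(j)}z_j+(1-z_j)\in M$ be the unitary extension of $u_i^{(j)}$ by the identity on the remaining blocks. With index set $I=\{(j,i):1\le j\le m,\,1\le i\le n_j^2\}$ of cardinality $\sum_j n_j^2=\dim M$, I propose
\[
\om:=(\dim M)\,e_M,\qquad F_{(j,i)}:=\lm_{(j,i)}^*\,e_\C\,\lm_{(j,i)}\ten 1,\qquad T_{(j,i)}:=\Ad(\Gamma(\tilde v_{(j,i)})).
\]
The Markov identity (\ref{e:Markov}) yields $\tau_2(\om)=1$; property (3) of the basic construction combined with $(M\ten 1)'\cap M_2 = M'\oten M' = A_1\vee B$ places $\om\in A_0'\cap M_2$ and realises minimality of the resource. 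Each $\Gamma(\tilde v_{(j,i)})$ is a unitary in $1\ten M'=B$, so $T_{(j,i)}$ is a UCP $A$-bimodule map preserving $B$. A direct calculation identifies $F_{(j,i)}=P_{(j,i)}\ten 1$, where $P_{(j,i)}=((u_i^{(j)})^*\ten 1)|\psi_j\ra\la\psi_j|(u_i^{(j)}\ten 1)$ is the Bell-type rank-one projection on $H_j$; these constitute a PVM in $\bigoplus_k\mc{B}(H_k)=A_0\vee A_1$, giving the second half of minimality.

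The heart of the argument is the teleportation identity $\sum_{(j,i)} E_{A_0}(F_{(j,i)}T_{(j,i)}(\Gamma(a))\om)=a$ for $a\in M$. Since $T_{(j,i)}(\Gamma(a))=\Gamma(\tilde v_{(j,i)}a\tilde v_{(j,i)}^*)$ and Lemma \ref{l:me2} gives $\Gamma(x)e_M=\gamma_0(x)e_M$, the product inside $E_{A_0}$ equals $(\dim M)\,F_{(j,i)}\,\gamma_0(\tilde v_{(j,i)}a\tilde v_{(j,i)}^*)\,e_M$. Decomposing $\tilde v_{(j,i)}a\tilde v_{(j,i)}^*=u_i^{(j)}(z_jaz_j)(u_i^{(j)})^*+(a-z_jaz_j)$, the off-block contribution is killed because $\gamma_0((1-z_j)a(1-z_j))=\sum_{k\ne j}z_k\gamma_0(a)z_k$ and $P_{(j,i)}z_k=0$ for $k\ne j$ (by the $H_j$-support of $P_{(j,i)}$ and $\gamma_0(z_k)=z_k$). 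What remains, for each fixed $j$, reduces after the cancellations $(\dim M)\cdot(n_j^2/\dim M)\cdot(\dim M/n_j^2)^{-1}$ exactly to the standard Werner teleportation identity for $M_{n_j}(\C)$ of Example \ref{ex:3.3}, producing $z_jaz_j$. Summing over $j$ gives $\sum_jz_jaz_j=a$. Tightness is immediate from $|I|=\dim M=\dim A_0$, and the one-way LOCC structure is the expected ``measure $\{F_{(j,i)}\}\subseteq A$, then apply $T_{(j,i)}$ on $B$''.

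The main obstacle is the careful bookkeeping in the middle step: tracking how the three normalising constants --- the $\sqrt{\dim M/n_j^2}$ in $\lm_{(j,i)}$, the factor $\dim M$ in $\om$, and the Markov block-weight $n_j^2/\dim M$ of $\tau$ --- conspire to collapse the global expression to the standard $M_{n_j}$ teleportation identity within each block, and verifying rigorously the vanishing of the off-block contribution via the centrality $\gamma_0(z_k)=z_k$ and the block support of $P_{(j,i)}$. Once these points are settled, the remaining claims are formal consequences of the basic-construction formalism.
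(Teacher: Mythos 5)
Your proposal is correct and follows essentially the same route as the paper's proof: the same resource $\om=(\dim M)e_M$, the same block-diagonal Bell-type PVM (the paper uses the Weyl basis in each summand where you allow an arbitrary unitary error basis), the same unitaries $\tilde v_{(j,i)}$ extended by the identity off the $j$-th block and pushed through $\Gamma$, and the same use of the entanglement relations for $e_{\C}$ and $e_M$ plus the Markov property $E_M(e_M)=(\dim M)^{-1}$ to collapse the sum. The only quibble is that the identity $\Gamma(x)e_M=\gamma_0(x)e_M$ follows from Lemma \ref{l:me} applied to the inclusion $M\subseteq M_1$ (as in the proof of Lemma \ref{l:me2}), not from the statement of Lemma \ref{l:me2} itself.
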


\begin{proof} As above, we take $M\cong\bigoplus_{j=1}^m M_{n_j}(\C)$ so that $M$ is represented on $L^2(M,\tau)$ as $M=\bigoplus_{j=1}^m M_{n_j}(\C)\ten 1_{n_j}$. 

Let $W_{j} : \Z_{n_j} \times \Z_{n_j} \rightarrow \mathcal{U}(M_{n_j}(\C))$ denote the Weyl representation of $ \Z_{n_j} \times \Z_{n_j}$, and $|\psi_{n_j}\ra$ denote the maximally entangled state in $\C^{n_j} \ten \C^{n_j}$ so that $|\psi_{n_j}\ra\la\psi_{n_j}|$ is the Jones projection for the inclusion $\C\subseteq M_{n_j}(\C)$ (cf. Example \ref{ex:1}). Define $F_{j,z_j}\in M\vee M'\subseteq M_1$ by 
$$F_{j,z_j} = 0_{n_1} \oplus ... \oplus 0_{n_{j-1}} \oplus(W_j(z_{j})^* \ten 1_{n_j})\ket{\psi_{n_j}}\bra{\psi_{n_j}}(W_j(z_{j}) \ten 1_{n_j})\oplus 0_{n_{j+1}} \oplus ... \oplus 0_{n_m},$$
for $j = 1,...n$ and $z_j \in \Z_{n_j}^2$. It follows that, $\sum_{z_j\in\Z_{n_j}^2}F_{j, z_j}=1_{n_j} \ten 1_{n_j}$ for each $j=1,...,n$. Hence, $\{ F_{j, z_j} \ | \ j = 1,...,n, z_{j} \in \Z_{n_j}^2 \}$ is a POVM in $M_1$ with cardinality $d:=\dim M$. Put 
$$\tilde{W}_{j}(z_j) = 1_{n_1} \oplus ... \oplus 1_{n_{j-1}} \oplus W_{j}(z_j) \oplus 1_{n_{j+1}} \oplus ... \oplus 1_{n_m},$$
which is a unitary in $M$, and define the automorphism $ T_{j, z_j} : M_2 \rightarrow M_2$ by 
$$ T_{j, z_j}(y) = \Gamma(\tilde{W}_j(z_j)) y \Gamma(\tilde{W}_j(z_j)^*), \ \ \ y\in M_2.$$
Clearly, each $ T_{j, z_j}$ leaves $M_1'\cap M_2=\Gamma(M)$ invariant, and is an $A$-bimodule map, where $A=\mc{B}(L^2(M,\tau))\ten 1$.

Finally, let $\om:=(\dim M)e_M$. Then $\om\in M'\cap M_2=M'\ten M'$. Noting that $\dim M=[M:\C]$, the Markov property (\ref{e:Markov}) implies that
$$\tau_2(\om)=\dim M(\tau_1\ten\tau')(e_M)=(\dim M)\tau_1(E_{M_1}(e_M))=\tau_1(1)=1,$$
so $\om$ is a $\tau_2$-density. We show that
\begin{equation*}
x=E_{M} \bigg( \sum_{j=1}^m \sum_{z_j \in \Z_{n_j}^2} F_{j,z_j} T_{j,z_j}(\Gamma(x))\om \bigg), \ \ \ x\in M,
\end{equation*}
where $E_M \equiv E_{M_1 \rightarrow M} \circ E_{M_2 \rightarrow M_1}$ is the unique trace preserving conditional  expectation from $M_2$ to $M$, and $\Gamma$ is the canonical shift from $M$ to $M_1' \cap M_2$. This will establish the Proposition, as tightness and minimality are clear by construction.

First consider the $ T_{j,z_j}(\Gamma(x))e_M$ term in the above equation (pulling the constant off $\om$).
\begin{align*}
 T_{j,z_j}(\Gamma(x))e_M= \Gamma\bigg( \tilde{W}_j(z_j) x \tilde{W}_j(z_j)^* \bigg )e_{M}
= \gamma_0\bigg( \tilde{W}_j(z_j) x \tilde{W}_j(z_j)^* \bigg )e_{M},
\end{align*}
the last equality following from Lemma \ref{l:me}. Write $x=\oplus_j (x_j\ten 1_{n_j})$. Then multiplying the above by $F_{j,z_j}$, we see that

\begin{align*}&F_{j,z_j}\gamma_0\bigg( \tilde{W}_j(z_j) x \tilde{W}_j(z_j)^* \bigg )e_M\\
&=F_{j,z_j}\gamma_0\bigg((x_1\ten 1_{n_1})\oplus\cdots\oplus (W_j(z_j)x_jW_j(z_j)^*\ten 1_{n_j})\oplus\cdots\oplus (x_m\ten 1_{n_m})\bigg)e_M\\
&=\bigg( (W_j(z_{j})^* \ten 1_{n_j})\ket{\psi_{n_j}}\bra{\psi_{n_j}}(W_j(z_{j}) \ten 1_{n_j}) \bigg) \bigg(1_{n_j} \ten \overline{W_j(z_j)}x_j^tW_j(z_j)^t\bigg)e_M\\
&=\bigg( (W_j(z_{j})^* \ten 1_{n_j})\ket{\psi_{n_j}}\bra{\psi_{n_j}}(1_{n_j} \ten \overline{W_j(z_j)}x_j^tW_j(z_j)^t)(W_j(z_{j}) \ten 1_{n_j}) \bigg)e_M\\
&=(W_j(z_{j})^* \ten 1_{n_j})\ket{\psi_{n_j}}\bra{\psi_{n_j}}((\overline{W_j(z_j)}x_j^tW_j(z_j)^t)^tW_j(z_{j}) \ten 1_{n_j})e_M \ \ \ \ \ \ \textnormal{(Lemma \ref{l:me} with $\ket{\psi_{n_j}}\bra{\psi_{n_j}}$)}\\
&=(W_j(z_{j})^* \ten 1_{n_j})\ket{\psi_{n_j}}\bra{\psi_{n_j}}(W_j(z_j)x_j \ten 1_{n_j})e_M\\
&=F_{j,z_j}xe_M
\end{align*}
Hence,
\begin{align*} E_{M} \bigg( \sum_{j=1}^m \sum_{z_j \in \Z_{n_j}^2} F_{j,z_j} T_{j,z_j}(\Gamma(x)))\om \bigg)&=\dim ME_M\bigg(\sum_{j=1}^m \sum_{z_j \in \Z_{n_j}^2}F_{j,z_j}xe_M\bigg)\\
&=x\dim ME_M(e_M)\\
&=x,
\end{align*}
where the last equality uses the Markov property 
\[E_M(e_M)=E_M(E_{M_1}(e_M))=\dim M^{-1}=[M:\C]^{-1}.\qedhere\]
\end{proof}

\begin{remark} Proposition \ref{p:directsum} shows that the ``direct sum'' of the standard protocols allows one to teleport any finite-dimensional algebra $M$. Moreover, the scheme is tight in the sense that the number of classical messages sent by Alice to Bob coincides with the dimension of the algebra to be teleported. However, one drawback of this approach is that Alice's measurement results are correlated with the location of the state within $M=\bigoplus_j M_{n_j}(\C)$. Indeed, if $\rho\in M$ is a density living entirely in one summand, say $M_{n_j}(\C)$, then the probability of Alice measuring outcome $(i,z_i)$, $i\neq j$, is $\tau(F_{i,z_i}\rho\om)=0$.

It could be desirable that Alice's measurement result, i.e., the classical information sent to Bob, contains no information about the state to be teleported. This is precisely the unbiased condition, and one can imagine this feature to be important in potential applications of the teleportation schemes introduced here; for instance, in the context of quantum privacy applications such as those considered in \cite{amtdw,bhs,brs,br03,ckpp,cgs,cgl,fiedler2017jones,jklp1,kribs2021approximate,levick2017quantum}. If $\C\subseteq M$ admits an orthonormal basis (with respect to the Markov trace) consisting of unitaries, then we can obtain an unbiased scheme. In fact, the same is true for more general inclusions $N\subseteq M$.
\end{remark}

\begin{theorem}\label{t:tele} Let $N\subseteq M$ be a strongly Markov inclusion of tracial von Neumann algebras. Suppose there is an orthonormal Pimsner--Popa basis $\{u_i\}_{i=1}^d$ for $M$ over $N$ inside the normaliser $\mathcal{N}_M(N)$, that $N'\cap M$ is finite-dimensional, and that $N'\cap M\subseteq L\subseteq N'$ for some injective factor $L$. Then there exists an unbiased teleportation scheme $(\om,\{F_i\}_{i=1}^d,\{ T_i\}_{i=1}^d)$ for $N'\cap M$, with respect to $M_1, M_1'\cap M_2\subseteq M_1\vee (M_1'\cap M_2)$.
\end{theorem}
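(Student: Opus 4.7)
The plan is to model the proof on Proposition \ref{p:directsum}, replacing the Weyl basis by the given orthonormal Pimsner--Popa basis $\{u_i\}_{i=1}^d$. I would take
\[
\omega=[M:N]\,e_M,\qquad F_i=u_i^*e_Nu_i\in M_1,\qquad T_i=\Ad(\Gamma(u_i))
\]
with $T_i$ acting on the ambient algebra $M_1\vee(M_1'\cap M_2)$. The Pimsner--Popa relation gives $\sum_iF_i=1$, so $\{F_i\}$ is a PVM in Alice's algebra $M_1$. Writing $B:=M_1'\cap M_2$, the element $\Gamma(u_i)\in B\subseteq M_1'$ makes each $T_i$ a normal $*$-automorphism of the ambient algebra that fixes $M_1$ pointwise and carries $B$ onto itself; symmetrically $\Ad(F_i)$ is a normal CP, $B$-bimodule map preserving $M_1$. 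Thus $\sum_i\Ad(F_i)\circ T_i$ is a one-way right LOCC operation in the sense of \cite{cklt}, while the Markov identity $E_{M_1}(e_M)=[M:N]^{-1}$ normalises $\omega$ to a $\tau_2$-density, and $e_M\in M'$ gives $\omega\in A_0'$ since $A_0=N'\cap M\subseteq M$.

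The teleportation identity $\sum_iE_{A_0}(F_iT_i(\Gamma(a))\omega)=a$ for $a\in N'\cap M$ I would extract from the operator-level claim $F_iT_i(\Gamma(a))e_M=F_iae_M$. The normaliser hypothesis $u_i\in\mathcal{N}_M(N)$ yields $u_iau_i^*\in N'\cap M$ and hence $T_i(\Gamma(a))=\Gamma(u_iau_i^*)$; since $\Gamma(u_iau_i^*)\in M_1'$ it commutes with $u_i,e_N\in M_1$, and Lemma \ref{l:me2} applied to $u_iau_i^*\in N'\cap M$ lets one replace $e_N\Gamma(u_iau_i^*)e_M$ by $e_N(u_iau_i^*)e_M$; commuting the unitary $u_i\in M$ through $e_M$ then collapses the expression to $F_iae_M$. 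Summing over $i$ using $\sum_iF_i=1$ gives $ae_M$, and $E_{A_0}(e_M)=[M:N]^{-1}$ (obtained by iterating $E_{M_1}(e_M)=[M:N]^{-1}$ with Takesaki's theorem) returns $a$. For unbiasedness I would use cyclic traciality of $\tau_2$ to write $\tau_2(F_i\rho\omega)=[M:N]\,\tau_2(e_Nu_i\rho u_i^*e_M)$, then apply successively $\tau_2(xe_M)=[M:N]^{-1}\tau_1(x)$ for $x\in M_1$ and the Markov property $\tau_1(e_Ny)=[M:N]^{-1}\tau(y)$ for $y\in M$; the final product is $[M:N]^{-1}\tau(u_i\rho u_i^*)=d^{-1}$, using $d=[M:N]$ for any orthonormal unitary Pimsner--Popa basis.

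The main obstacle I anticipate is verifying the constraint $\omega\in A_0'\cap(M_1\vee B)$ imposed by Definition \ref{d:tele}: commutation with $A_0$ is immediate, but one generally has $M_1\vee B\subsetneq M_2$, so the membership $e_M\in M_1\vee B$ is not formal. This is where the as-yet-unused hypotheses should intervene: finite-dimensionality of $N'\cap M$ forces $B=\Gamma(N'\cap M)$ to be finite-dimensional, and the inclusion $N'\cap M\subseteq L\subseteq N'$ into an injective factor $L$ should supply either an identification of $M_1\vee B$ inside $M_2$ large enough to contain $e_M$, or a conditional-expectation mechanism that replaces $e_M$ by an image in $M_1\vee B$ for which the operator identities above still propagate. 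Pinning down this final structural step appears to be the genuine content of the theorem beyond the calculations, and is the place where the injective factor hypothesis does the real work.
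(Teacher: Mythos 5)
Your choices of $\omega=[M:N]e_M$ and $F_i=u_i^*e_Nu_i$, the reduction of the teleportation identity to $F_iT_i(\Gamma(a))e_M=F_iae_M$ via Lemmas \ref{l:me} and \ref{l:me2}, and the trace computation for unbiasedness all match the paper. But the definition $T_i=\Ad(\Gamma(u_i))$ does not parse: the canonical shift $\Gamma=\gamma_1\circ\gamma_0$ is defined only on $N'\cap M$, and the basis elements $u_i\in\mathcal{N}_M(N)$ need not lie in $N'\cap M$ (they do only in degenerate cases such as $N=\C$). Nor can you repair this by finding a unitary inside $B=M_1'\cap M_2$ implementing the automorphism $\Gamma(a)\mapsto\Gamma(u_iau_i^*)$: that automorphism can permute the minimal central projections of the finite-dimensional algebra $B\cong N'\cap M$ (take $N=\ell^\infty_2\subseteq M_2(\C)$ with $u$ the flip matrix), hence need not be inner in $B$, and there is no reason for an implementing unitary to exist in $M_1'$ either. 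This is exactly the problem the paper's Lemma \ref{l:LOCC} solves: using the isomorphism $\varphi:M_d(M)\to M_2$, $\varphi([x_{ij}])=[M:N]\sum_{i,j}u_i^*e_Nx_{ij}e_Me_Nu_j$, one builds unitaries $v_i=\varphi(\mathrm{diag}(u_i,\dots,u_i))\in M_2$ satisfying $v_i\Gamma(x)v_i^*=\Gamma(u_ixu_i^*)$ for $x\in N'\cap M$. These $v_i$ live in $M_2$, not in $M_1'$, so $\Ad(v_i)$ is not automatically an $M_1$-bimodule map; the finite-dimensionality of $N'\cap M$ and the injective factor $L$ with $N'\cap M\subseteq L\subseteq N'$ are then used to split $M_1\vee B\cong M_1\,\oten\,B$ and extend $\Ad(v_i)|_{B}$ to an $M_1$-bimodule $*$-automorphism of $M_1\vee B$, which is what the LOCC condition in Definition \ref{d:tele} demands.

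Consequently your closing diagnosis misplaces where the unused hypotheses do their work. They are consumed entirely by the construction and extension of the $T_i$ just described, not by any issue with $\omega$: the paper takes the ambient tracial algebra to be $(M_2,\tau_2)$ and only needs $\omega=[M:N]e_M$ to be a $\tau_2$-density in $(N'\cap M)'\cap M_2$, which follows from $e_M\in M'$ and the Markov property; membership of $e_M$ in $M_1\vee B$ is never invoked. Once the $T_i$ are defined via Lemma \ref{l:LOCC}, the identity $T_i(\Gamma(a))=\Gamma(u_iau_i^*)$ that you assumed does hold, and the remainder of your argument goes through essentially as in the paper.
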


In the proof, the following Lemma will be used to achieve the locality condition between Alice's and Bob's operations in Definition \ref{d:tele}. It is an application of the periodicity of the tower of the basic construction (see, e.g., \cite[Proposition 1.5]{pp}, or \cite[Proposition 4.3.7]{js}). We include details for the convenience of the reader (since we could not find a proof beyond the II$_1$-factor setting).

\begin{lemma}\label{l:LOCC} Let $N\subseteq M$ be a strongly Markov inclusion of tracial von Neumann algebras. Suppose there is an orthonormal Pimsner--Popa basis $\{u_i\}_{i=1}^d$ for $M$ over $N$ in the normaliser $\cl N_M(N)$. Then there exist unitaries $\{v_i\}_{i=1}^d\subseteq M_2$ satisfying
$$v_i\Gamma(x)v_i^*=\Gamma(u_ixu_i^*), \ \ \ i=1,...,d, \ x\in N'\cap M.$$
If, in addition, $N'\cap M$ is finite-dimensional and $N'\cap M\subseteq L\subseteq N'$ for some injective factor $L$, then $\Ad(v_i)|_{M_1'\cap M_2}$ extends to a $M_1$-bimodule $*$-automorphism of $M_1\vee (M_1'\cap M_2)$.
\end{lemma}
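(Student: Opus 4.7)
The plan is to realise the automorphism $\Ad(u_i)$ of $N'\cap M$ as an inner automorphism of $M_2$, restricted to $M_1'\cap M_2$, via the 2-periodicity of the Jones tower. I would first observe that $\Ad(u_i)$ is a $\tau$-preserving automorphism of the pair $(M,N)$, and by functoriality of the basic construction it lifts canonically (through two iterations) to a $\tau_2$-preserving automorphism $\alpha_i$ of $M_2$ preserving $M_1$, $M$, $N$ and fixing both Jones projections $e_N$ and $e_M$. A direct calculation using $\Gamma=\gamma_1\circ\gamma_0$ together with the commutation $M\subseteq(M'\cap M_1)'$ (as $M'\cap M_1\subseteq M'$) shows
\[\alpha_i(\Gamma(x))=\Gamma(u_ixu_i^*), \qquad x\in N'\cap M.\]
It therefore suffices to exhibit a unitary $v_i\in M_2$ implementing $\alpha_i$ on $M_1'\cap M_2$, not necessarily on all of $M_2$.

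For this I would invoke the 2-periodicity of the Jones tower \cite[Proposition~1.5]{pp}, which identifies $(M_1,M_2)$ with a shifted copy of $(N,M)$ and hence transports $\cl N_M(N)$ into $\cl N_{M_2}(M_1)$ (well-defined modulo $\mc U(M_1)$). A lift $v_i$ of the coset of $u_i$ then lies in $\cl N_{M_2}(M_1)$ and, by naturality of $\Gamma$, its conjugation on $M_1'\cap M_2=\Gamma(N'\cap M)$ agrees with $\alpha_i|_{M_1'\cap M_2}$, giving the desired identity. An alternative, more explicit route would be to write $v_i$ directly as a weighted combination of the basis elements $u_j$ and the Jones projections sandwiched between $e_N$ and $e_M$; unitarity would then reduce to the orthonormal Pimsner--Popa relations, and the intertwining identity would follow from Lemmas~\ref{l:me} and~\ref{l:me2} together with the Temperley--Lieb identities \eqref{e:TL}.

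For the second assertion, the finite-dimensionality of $N'\cap M$ combined with the injective-factor hypothesis $N'\cap M\subseteq L\subseteq N'$ allows us to transport $L$ through the tower to produce an injective factor $\hat L\subseteq M_2$ containing $M_1'\cap M_2$ and commuting with $M_1$. Since $\hat L$ is a factor, the uniqueness of trace-preserving embeddings of a finite-dimensional algebra into a factor lets us implement $\Ad(v_i)|_{M_1'\cap M_2}$ by a unitary $\hat u_i\in\hat L$, and replacing $v_i$ by $\hat u_i$ does not affect the conjugation action on $M_1'\cap M_2$. Factoriality of $\hat L$ yields $M_1\vee\hat L\cong M_1\oten\hat L$, and then $\id_{M_1}\oten\Ad(\hat u_i)$ is the desired $M_1$-bimodule $*$-automorphism of $M_1\oten\hat L$, which restricts to $M_1\vee(M_1'\cap M_2)$ and extends $\Ad(v_i)|_{M_1'\cap M_2}$.

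The main obstacle I expect is the first step: the natural unitary implementing $\alpha_i$ on $L^2(M_1,\tau_1)$ lies in $\cl B(L^2(M_1,\tau_1))$ but fails to belong to $M_2$ unless $u_i\in Z(M)$, so the 2-periodicity (or an explicit basis-plus-Jones-projection formula) is essential for producing $v_i$. The LOCC-style extension in the second part is then largely structural, modulo carefully identifying the injective factor $\hat L\subseteq M_2$ and verifying that the replacement of $v_i$ by $\hat u_i$ within $\hat L$ preserves the intertwining relation on $M_1'\cap M_2$.
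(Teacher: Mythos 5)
For the first half of the lemma your plan is essentially the paper's. The proof in the paper is exactly your ``alternative, more explicit route'': it builds a unital $*$-homomorphism $\vphi:M_d(M)\to M_2$, $\vphi([x_{i,j}])=[M:N]\sum_{i,j}u_i^*e_Nx_{i,j}e_Me_Nu_j$ (this is the periodicity isomorphism of \cite[Proposition 1.5]{pp} written out by hand, which the authors do precisely because that periodicity statement is only documented in the literature for II$_1$ factors, not for general strongly Markov inclusions), sets $v_i=\vphi(\mathrm{diag}(u_i,\dots,u_i))$, and verifies the intertwining identity by a direct computation using Lemma~\ref{l:me2}, orthonormality, and the relations (\ref{e:TL}). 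Your more abstract route via a lifted automorphism $\alpha_i$ of $M_2$ and a transported normaliser element is the same idea one level up; to make it rigorous outside the factor setting you would still need to construct the identification $(M_1\subseteq M_2)\cong(M_d(N)\subseteq M_d(M))$ and check that it carries $\Gamma$ correctly, which is exactly the content of the explicit computation. You correctly identify the key obstruction (the spatial implementer $u_iJu_iJ$ does not lie in $M_1$, resp.\ $M_2$).

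The second half is where your argument has genuine gaps. First, you ask for an injective factor $\hat L\subseteq M_2$ containing $M_1'\cap M_2$ and commuting with $M_1$; but these conditions force $\hat L\subseteq M_1'\cap M_2$, which is finite-dimensional, so $\hat L=M_1'\cap M_2$ and $N'\cap M$ would have to be a factor --- which is not assumed. The paper instead transports $L$ to an injective factor $L_0:=J_1J_0(L)J_0J_1$ with $M_1'\cap M_2\subseteq L_0\subseteq M_1'$; this $L_0$ is \emph{not} contained in $M_2$, and does not need to be. Second, ``factoriality of $\hat L$ yields $M_1\vee\hat L\cong M_1\oten\hat L$'' is false as stated: two commuting factors need not generate a copy of their tensor product. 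The property doing the work is injectivity, via Effros--Lance \cite[Corollary 4.6]{el}: multiplication induces $L_0'\ten_{\min}L_0\cong C^*(L_0',L_0)$, hence $M_1\ten_{\min}(M_1'\cap M_2)\cong C^*(M_1,M_1'\cap M_2)$, and finite-dimensionality of $M_1'\cap M_2$ is then used to upgrade this to the von Neumann algebra $M_1\vee(M_1'\cap M_2)$; if you instead try to split $M_1\vee\hat L$ with $\hat L$ infinite-dimensional you also face the unaddressed problem of promoting a minimal-tensor-product isomorphism to the weak closure. Third, the detour through a unitary $\hat u_i\in\hat L$ implementing $\Ad(v_i)|_{M_1'\cap M_2}$ is both unjustified (conjugacy of two embeddings of a finite-dimensional algebra into a factor of unspecified type requires an argument) and unnecessary: once one has $M_1\vee(M_1'\cap M_2)\cong M_1\oten(M_1'\cap M_2)$, the desired $M_1$-bimodule extension is simply $\id\ten\Ad(v_i)$, with no new unitary needed.
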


\begin{proof} Define $\vphi:M_d(M)\rightarrow M_2$ by 
$$\phi([x_{i,j}])=[M:N]\sum_{i,j=1}^d u_i^*e_Nx_{i,j}e_Me_Nu_j, \ \ \ x\in M_d(M).$$ 
Using orthonormality of $\{u_i\}$ together with the relations $e_M(\cdot)e_M=E_M(\cdot)e_M$ and $e_N(\cdot)e_N=E_N(\cdot)e_N$ on $M_1$ and $M$, respectively, it follows that $\phi$ is multiplicative:
\begin{align*}\phi([x_{i,j}])\phi([y_{i,j}])&=[M:N]^2\sum_{i,j,k,l=1}^du_i^*e_Nx_{i,j}e_M(e_Nu_ju_k^*e_Ny_{k,l})e_Me_Nu_l\\
&=[M:N]^2\sum_{i,j,k,l=1}^du_i^*e_Nx_{i,j}E_M(e_Nu_ju_k^*e_Ny_{k,l})e_Me_Nu_l\\
&=[M:N]^2\sum_{i,j,k,l=1}^du_i^*e_Nx_{i,j}E_M(E_N(u_ju_k^*)e_N)y_{k,l}e_Me_Nu_l\\
&=[M:N]^2\sum_{i,j,l=1}^du_i^*e_Nx_{i,j}E_M(e_N)y_{j,l}e_Me_Nu_l\\
&=[M:N]\sum_{i,j,l=1}^du_i^*e_Nx_{i,j}y_{j,l}e_Me_Nu_l\\
&=\phi([x_{i,j}][y_{k,l}]).\\
\end{align*}
Also, as $e_M\in M'$, 
$$\phi([x_{i,j}])^*=[M:N]\sum_{i,j=1}^d u_j^*e_Ne_Mx_{i,j}^*e_Nu_i=[M:N]\sum_{i,j=1}^d u_i^*e_Nx_{j,i}^*e_Me_Nu_j=\phi([x_{i,j}]^*).$$
Also, the relation $e_Ne_Me_N=[M:N]^{-1}e_N$ (see (\ref{e:TL})) together with the basis property implies that $\phi$ is unital. (One can also show that $\vphi$ is bijective, and hence a $*$-isomorphism. This fact is well known for \textit{any} finite-index inclusion of II$_1$-factors, see, e.g., \cite[Proposition 4.3.7]{js}.)

For each $i=1,...,d$, define $v_i:=\vphi(\mathrm{diag}(u_i,u_i,...,u_i))\in M_2$. Then $$v_i=[M:N]\sum_{j=1}^d u_j^*e_Nu_ie_Me_Nu_j$$
is unitary as the image of a unitary in $M_d(M)$ under a unital $*$-homomorphism. 

Let $x\in N'\cap M$. By Lemma \ref{l:me2} we have $e_N\Gamma(x)e_M=e_N xe_M$. Also, by orthonormality and the fact that $\Gamma(x)\in M_1'\cap M_2$, for each $j,k=1,...d$ we have
$$e_Nu_j\Gamma(x)u_k^*e_N=E_N(u_ju_k^*)e_N\Gamma(x)=\delta_{j,k}e_N\Gamma(x).$$
Thus,
\begin{align*}v_i\Gamma(x)v_i^*&=[M:N]^2\sum_{j,k=1}^du_j^*e_Nu_ie_M(e_Nu_j\Gamma(x)u_k^*e_N)e_Mu_i^*e_Nu_k\\
&=[M:N]^2\sum_{j=1}^du_j^*e_Nu_ie_M(e_N\Gamma(x)e_M)u_i^*e_Nu_j\\
&=[M:N]^2\sum_{j=1}^du_j^*e_Nu_ie_M(e_Nxe_M)u_i^*e_Nu_j\\
&=[M:N]^2\sum_{j=1}^du_j^*e_Nu_i(e_Me_Ne_M)xu_i^*e_Nu_j\\
&=[M:N]\sum_{j=1}^du_j^*e_N(u_ie_M)xu_i^*e_Nu_j\\
&=[M:N]\sum_{j=1}^du_j^*e_Ne_M(u_ixu_i^*)e_Nu_j\\
&=[M:N]\sum_{j=1}^du_j^*e_Ne_M\Gamma(u_ixu_i^*)e_Nu_j\\
&=[M:N]\bigg(\sum_{j=1}^du_j^*e_Ne_Me_Nu_j\bigg)\Gamma(u_ixu_i^*)\\
&=\bigg(\sum_{j=1}^du_j^*e_Nu_j\bigg)\Gamma(u_ixu_i^*)\\
&=\Gamma(u_ixu_i^*).
\end{align*}
Now, suppose, in addition, that $N'\cap M$ is finite-dimensional and $N'\cap M\subseteq L\subseteq N'$ for some injective factor $L$. Then $L_0:=J_1J_0(L)J_0J_1\subseteq M_1'$ is an injective factor such that $M_1'\cap M_2\subseteq L_0\subseteq M_1'$. By injectivity of $L_0$, multiplication induces a $*$-isomorphism $m:L_0'\ten_{\min} L_0\cong C^*(L_0',L_0)$, the $C^*$-subalgebra of $\mc{B}(L^2(M_1,\tau_1))$ generated by $L_0$ and $L_0'$ \cite[Corollary 4.6]{el}. By injectivity of the minimal tensor product, $M_1\ten_{\min} (M_1'\cap M_2)\subset L_0'\ten_{\min} L_0$. It follows that the restriction of $m$ induces a $*$-isomorphism 
$$m:M_1\ten_{\min} (M_1'\cap M_2)\cong C^*(M_1,M_1'\cap M_2).$$
Note also that by finite-dimensionality of $M_1'\cap M_2=\Gamma(N'\cap M)$, 
$$M_1\ten_{\min} (M_1'\cap M_2)=M_1\oten (M_1'\cap M_2),$$
where $\oten$ is the von Neumann tensor product. Since $M_1$ is weakly closed and commutes with the finite-dimensional $C^*$-algebra $M_1'\cap M_2$, by considering a system of matrix units for the latter algebra we can see that $C^*(M_1,M_1'\cap M_2)$ is weakly closed, and hence coincides with $M_1\vee (M_1'\cap M_2)$. The desired extension of $\Ad(v_i)$ is then the composition
$$M_1\vee (M_1'\cap M_2)\cong M_1\oten (M_1'\cap M_2)\xrightarrow{(\id\ten\Ad(v_i))}M_1\oten (M_1'\cap M_2)\cong M_1\vee (M_1'\cap M_2).$$
Note that the extension is necessarily an $M_1$-bimodule $*$-automorphism.
\end{proof}


\begin{proof}[Proof of Theorem \ref{t:tele}] Let $\om:=[M:N]e_M$. Then $\om$ is a $\tau$-density in $M'\cap M_2$ by the Markov property. For each $i=1,...,d$, let $F_i=u_i^*e_Nu_i$. Then $\{F_i\}_{i=1}^d$ is a PVM in $N'\cap M_1$. Finally, let $v_i:=\vphi(\mathrm{diag}(u_i,u_i,...,u_i))\in M_2$ be the unitary associated to $u_i$ from Lemma \ref{l:LOCC}, and define $ T_i$ to be the $M_1$-bimodule extension of $\mathrm{Ad}(v_i)|_{M_1'\cap M_2}$ to $M_1\vee (M_1'\cap M_2)$. It follows that $\sum_{i=1}^d \Ad(F_i)\circ T_i$ is a one-way (right) LOCC operation relative to $M_1,M_1'\cap M_2\subseteq M_1\vee (M_1'\cap M_2)$.

Fix $x\in N'\cap M$. We will show that
$$x=\sum_{i=1}^dE_{N'\cap M}(F_i T_i(\Gamma(x))\om),$$
so that $(\om,\{F_i\}_{i=1}^d,\{ T_i\}_{i=1}^d)$ forms a teleportation scheme. By Lemma \ref{l:LOCC}
$$T_i(\Gamma(x))=v_i\Gamma(x)v_i^*=\Gamma(u_ixu_i^*).$$
Applying Lemma \ref{l:me} twice, we see that
\begin{align*}
E_{N'\cap M}(F_i T_i(\Gamma(x))\om)&=[M:N]E_{N'\cap M}(F_i T_i(\Gamma(x))e_M)\\
&=[M:N]E_{N'\cap M}(F_i\Gamma(u_ixu_i^*)e_M)\\
&=[M:N]E_{N'\cap M}(F_i\gamma_0(u_ixu_i^*)e_M)\\
&=[M:N]E_{N'\cap M}(u_i^*e_Nu_i\gamma_0(u_ixu_i^*)e_M)\\
&=[M:N]E_{N'\cap M}(u_i^*e_N\gamma_0(u_ixu_i^*)u_ie_M)\\
&=[M:N]E_{N'\cap M}(u_i^*e_N(u_ixu_i^*)u_ie_M)\\
&=[M:N]E_{N'\cap M}(u_i^*e_Nu_ixe_M)\\
&=[M:N]E_{N'\cap M}(F_ie_M)x.\\
\end{align*}
Summing over $i$, and applying the Markov property 
$$E_{N'\cap M}(e_M)=E_{N'\cap M}(E_{M_1}(e_M))=[M_1:M]^{-1} 1=[M:N]^{-1}1$$
gives the identity.

It remains to show the unbiased property, that is, $\tau(F_i\rho\om)=[M:N]^{-1}$ for all $i$ and $\tau$-densities $\rho\in N'\cap M$. For any such $\rho$, $F_i\rho\in M_1$, so that $\tau(F_i\rho e_M)=[M:N]^{-1}\tau(F_i\rho)$. Thus,
\begin{align*}\tau(F_i\rho\om)&=[M:N]\tau(F_i\rho e_M)=\tau(u_i^*e_Nu_i\rho)\\
&=\tau(e_N(u_i\rho u_i^*))=[M:N]^{-1}\tau(u_i\rho u_i^*)\\
&=\frac{1}{[M:N]}.
\end{align*}
\end{proof}

\begin{remark} The unbiased scheme in Theorem \ref{t:tele} is not necessarily tight. For example, tightness would imply $[M:N]=\dim(N'\cap M)$, which, for a finite-index inclusion $N\subseteq M$ of II$_1$-factors, means that the inclusion has depth 1 \cite[Theorem 4.6.3(vii)]{dhj}.
\end{remark}

The main assumption in Theorem \ref{t:tele} was the existence of a Pimsner--Popa basis $\{u_i\}$ for $M$ over $N$ inside the normaliser $\mathcal{N}_M(N)$. In this case, the completeness relation $\sum_i u_i^*e_N u_i=1$  corresponds to the decomposition 
\begin{equation}\label{e:decomp}L^2(M,\tau)=\bigoplus_i u_i^*L^2(N,\tau)\end{equation}
of $L^2(M,\tau)$ into maximally entangled subspaces with respect to the commuting subsystems $N'\cap M$ and $M'\cap M_1$ (see Section \ref{s:ent}). When $N=\C\subseteq M_n(\C)=M$, the normaliser assumption holds trivially, and the decomposition (\ref{e:decomp}) simply corresponds to an orthonormal basis of maximally entangled vectors of $L^2(M_n(\C),\tau_n)=\C^n\ten\C^n$, i.e., Alice's local measurement in teleportation. Moreover, the normalisation property implies that the decomposition (\ref{e:decomp}) is one of $N$-bimodules, suggesting a potential connection with categorical approaches to teleportation \cite{ac,lwj}, which we leave for future investigations (see Outlook section).

\begin{example}\label{ex:PPbases} Recall that the hypotheses of Theorem~\ref{t:tele} require the existence of an orthonormal Pimsner-Popa basis in the normaliser. This holds in the following cases:
${}$
\begin{enumerate}
\item Any inclusion $M\subseteq G\ltimes M$, where $G$ is a finite group with a trace-preserving action on a tracial von Neumann algebra $M$ (see Example \ref{ex:3}).
\item Any finite-index regular inclusion $N\subseteq M$ of II$_1$-factors \cite{bg1,bg2,ckp}. This fact was shown by Bakshi-Gupta whenever $N'\cap M$ is either simple or commutative \cite{bg1,bg2} (follow the proof of \cite[Theorem 3.21]{bg2}), and was recently extended to any finite-index regular inclusion (of II$_1$-factors) by Crann-Kribs-Pereira \cite{ckp}.
\item Any inclusion $N\subseteq M_n(\C)$ with $N=\bigoplus_{j=1}^k M_l(\C)$ homogeneous. (This may be seen as a special case of (1).) Homogeneous algebras of this type can be used to model hybrid quantum codes \cite{bkk1,cao2021higher,grassl2017codes,nemec2018hybrid,nemec2021infinite}, so our scheme from Theorem \ref{t:tele} could theoretically be used to teleport hybrid quantum codes in an unbiased manner.
\end{enumerate}
\end{example}

We end this section with a partial converse of Example \ref{ex:PPbases} (3).

\begin{proposition} A multiplicity-free inclusion $N\subseteq M_n(\C)$ admits a Pimsner-Popa basis in $\mc{N}_{M_n(\C)}(N)$ if and only if $N$ is homogeneous.
\end{proposition}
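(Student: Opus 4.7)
The plan is to dispatch $(\Leftarrow)$ by Example \ref{ex:PPbases}(3) and to prove $(\Rightarrow)$ by translating the Pimsner--Popa reconstruction identity into a Latin-square condition on the permutations of the central summands of $N$ induced by a normalising basis, and then closing the argument by rank preservation.

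For the setup, I would write $N=\bigoplus_{j=1}^k M_{n_j}(\C)\subseteq M_n(\C)$ with $n=\sum_j n_j$, let $p_1,\ldots,p_k$ denote the central projections of $N$, and record two consequences of multiplicity-freeness: $p_lM_n(\C)p_l=p_lNp_l=M_{n_l}(\C)$, so the $\tau$-preserving conditional expectation is the block cutdown $E_N(x)=\sum_l p_lxp_l$; and the rank of $p_l$ in $M_n(\C)$ is $n_l$. The index is $[M_n(\C):N]=k$ by the inclusion-matrix formula recalled in Section~2. Any $u\in\mc{N}_{M_n(\C)}(N)$ induces a permutation $\sigma$ of $\{1,\ldots,k\}$ via $up_lu^*=p_{\sigma(l)}$, and rank preservation gives $n_{\sigma(l)}=n_l$. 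If $\{\lm_i\}\subseteq\mc{N}_{M_n(\C)}(N)$ is a Pimsner--Popa basis, then $\sum_i\lm_i^*\lm_i=[M_n(\C):N]\cdot 1$ combined with $\lm_i^*\lm_i=1$ forces exactly $k$ basis elements, inducing permutations $\sigma_1,\ldots,\sigma_k$.

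The heart of the argument is the identity $x=\sum_iE_N(x\lm_i^*)\lm_i$. Using $\lm_i^*p_l=p_{\sigma_i^{-1}(l)}\lm_i^*$, a short computation gives $E_N(x\lm_i^*)\lm_i=\sum_m p_{\sigma_i(m)}xp_m$, so that
$$x=\sum_{i=1}^k\sum_{m=1}^kp_{\sigma_i(m)}xp_m \qquad (x\in M_n(\C)).$$
Compressing on the left by $p_a$ and on the right by $p_b$ yields $p_axp_b=|\{i:\sigma_i(b)=a\}|\,p_axp_b$, and since $p_aM_n(\C)p_b$ has dimension $n_an_b>0$, we conclude $|\{i:\sigma_i(b)=a\}|=1$ for every pair $(a,b)$. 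Thus, for each fixed $b$, the map $i\mapsto\sigma_i(b)$ is a bijection onto $\{1,\ldots,k\}$; combined with $n_{\sigma_i(b)}=n_b$, this forces $n_a=n_b$ for all $a,b$, so $N$ is homogeneous.

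The main (mild) obstacle I anticipate is the bookkeeping of the commutations $\lm_ip_l=p_{\sigma_i(l)}\lm_i$ through the conditional expectation to cleanly extract the Latin-square condition; once that is in hand, the combinatorics is immediate and rank preservation closes the argument. A secondary point worth stating explicitly is the index computation $[M_n(\C):N]=k$ together with the block-cutdown form of $E_N$, both of which rely essentially on multiplicity-freeness.
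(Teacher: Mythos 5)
Your proof is correct, but the forward implication takes a genuinely different route from the paper's. The paper writes the completeness relation $\sum_i\lambda_i^*e_N\lambda_i=1$ on $L^2(M_n(\C),\tau)\cong\C^n\otimes\C^n$, where multiplicity-freeness gives $e_N=\sum_j z_j\otimes z_j$; multiplying by $1_n\otimes z_j$ and taking unnormalised traces yields $nn_j=dn_j^2$, hence $n_j=n/d$ for every $j$. Notably, that computation uses only the \emph{unitarity} of the $\lambda_i$, not the normaliser condition, so the paper in effect proves the stronger statement that any unitary Pimsner--Popa basis forces homogeneity, and it never needs to identify $d$ with the index. Your argument, by contrast, exploits the normaliser hypothesis essentially: the induced permutations $\sigma_i$ of the minimal central projections, the reconstruction identity $x=\sum_i E_N(x\lambda_i^*)\lambda_i$, and the compressions $p_a(\cdot)p_b$ give the sharply transitive (``Latin square'') condition $|\{i:\sigma_i(b)=a\}|=1$, which combined with rank preservation $n_{\sigma_i(l)}=n_l$ yields homogeneity. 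The supporting facts you invoke ($E_N=\sum_l p_l(\cdot)p_l$ in the multiplicity-free case, $[M_n(\C):N]=\norm{\Lambda_N^{M_n(\C)}}^2=k$ for this connected inclusion, and $\sum_i\lambda_i^*\lambda_i=[M_n(\C):N]1$ forcing $d=k$) are all available from Section 2 and check out. What your route buys is extra structural information --- the $\sigma_i$ form a sharply transitive set of block-size-preserving permutations; what the paper's route buys is brevity and a conclusion that does not need the basis to normalise $N$.
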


\begin{proof} If $N\subseteq M_n(\C)$ is multiplicity free and $N$ is homogeneous, then $n=kl$ and $N=\bigoplus_{j=1}^k M_l(\C)\cong\ell^\infty_k\ten M_l(\C)$. The existence of the required basis follows from Example \ref{ex:PPbases} (3).

Suppose $N = \bigoplus_{j=1}^k M_{n_j}(\C) \subseteq M_n(\C)$ is a multiplicity-free inclusion. Let $z_1,...,z_k$ be the minimal central projections of $N$, indexed such that $z_jN \cong M_{n_j}(\C)$. It follows that $E_N=\sum_{j=1}^k z_j(\cdot)z_j$ and that $e_N=\sum_{j=1}^k z_k\ten z_k\in \mc{B}(\C^n\ten\C^n)$.

Let $\{ \lm_i \}_{i=1}^d$ be any Pimsner-Popa basis for $M_n(\C)$ over $N$.  Then
\begin{align*}
1_n\ten 1_n &= \sum_{i=1}^d (\lm_i^* \ten 1_n)e_N (\lm_i \ten 1_n) \\ 
&= \sum_{i=1}^d (\lm_i^* \ten 1_n) \bigg(\sum_{j=1}^k z_j \ten z_{j}\bigg) (\lm_i \ten 1_n) \\
&= \sum_{j=1}^k \bigg(\sum_{i=1}^d \lm_i^*z_j\lm_i\bigg) \ten z_{j}.
\end{align*}
If each $\lambda_i$ is unitary, then we can multiply by $1_n\otimes z_j$ and take unnormalised traces to obtain
\[ n n_j=\tr(1_n\otimes z_j)=\tr\bigg(\big(\sum_{i=1}^d \lm_i^*z_j\lm_i\big)\otimes z_j\bigg) = d n_j^2,\]
so $n_j=n/d$ for every $j$, hence $N$ is homogeneous.
\end{proof}

\section{Rigidity of Teleportation for Finite-Dimensional Inclusions}

Werner \cite{w} established a one-to-one correspondence between tight teleportation schemes for the tripartite system $M_n(\C)\ten M_n(\C)\ten M_n(\C)$ and orthonormal bases of unitaries of $M_n(\C)$ (a.k.a, unitary error bases). In this section we generalize Werner's result to inclusions of the form $N\subseteq M_n(\C)$.

\begin{theorem}\label{t:Werner} Let $N\subseteq (M_n(\C),\tau)$ be an inclusion such that $\tau|_{N'}$ is the Markov trace for $\C\subseteq N'$. Put $A_0:=N'\ten 1\ten 1$, $A_1:=1\ten N'\ten 1$ and $B=1\ten 1\ten N'$. If $(\om,\{F_i\}_{i=1}^d,\{T_i\}_{i=1}^d)$ is a tight, minimal, faithful teleportation scheme for $A_0$, with respect to
 $$M_n(\C)\ten M_n(\C)\ten 1, \ 1\ten 1\ten N'\subseteq M_n(\C)\ten M_n(\C)\ten N',$$
then there exist (1) an orthonormal basis $\{u_i\}_{i=1}^d$ of $M_n(\C)$ over $N$ in $\mathcal{N}_{M_n(\C)}(N)$, (2) a unitary $u\in\mathcal{N}_{M_n(\C)}(N)$, (3) a positive invertible operator $z\in \mathcal{Z}(N)$ such that
\begin{itemize}
\item $\om=[M_n(\C):N](1\ten z^{1/2}u)e_N(1\ten u^*z^{1/2})$;
\item $F_i=(u_i^*u\ten 1)e_N(u^*u_i\ten 1)$, $i=1,...,d$;
\item $T_i(x)=u_ixu_i^*$, $x\in N'$.
\end{itemize}
\end{theorem}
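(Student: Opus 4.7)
The plan is to adapt Werner's strategy \cite{w} to the subalgebra setting, with a Jones projection playing the role of the maximally entangled resource. First, by minimality, $\omega\in A_1\vee B\cong 1\otimes (N'\oten N')$ and each $F_i\in A_0\vee A_1\cong (N'\oten N')\otimes 1$. The $A$-bimodularity of $T_i$ together with $T_i(B)\subseteq B$ makes $T_i$ act as the identity on $A$ and reduces it to a normal UCP map $\widehat{T}_i\colon N'\to N'$ on $B\cong N'$. After these identifications, and absorbing the $*$-isomorphism $\Gamma\colon A_0\to B$ into the $\widehat{T}_i$, the teleportation identity collapses to
\begin{equation*}
\sum_{i=1}^{d} (\id\otimes \tau_{N'}\otimes \tau_{N'})\bigl(F_i\,(1\otimes 1\otimes \widehat{T}_i(a))\,\omega\bigr) = a, \qquad a\in N',
\end{equation*}
inside $N'\oten N'\oten N'$.

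Next, I would argue that tightness $|I|=\dim N'$ and faithfulness force each $F_i$ to have the form
\begin{equation*}
F_i=(w_i^*\otimes 1)\,P\,(w_i\otimes 1)
\end{equation*}
for unitaries $w_i\in \mc N_{M_n(\C)}(N)$, where $P\in N'\oten N'$ denotes the Jones projection for $\C\subseteq N'$ (rank-one onto a scalar multiple of the trace vector of $N'$, by the Markov hypothesis). This is the subalgebra analogue of Werner's observation that tight teleportation POVMs on $\C^n\otimes\C^n$ are composed of maximally entangled projectors. The argument should proceed via a Choi--Jamio\l{}kowski-style analysis of the reduced identity: it expresses $\id_{N'}$ as a sum of $d$ normal CP maps, and tightness saturates an appropriate Cauchy--Schwarz/Pimsner--Popa inequality, forcing each summand to come from a single maximally entangled vector. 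The containment $F_i\in N'\oten N'$ then yields the intertwining $w_iNw_i^*=N$, i.e.\ $w_i\in\mc N_{M_n(\C)}(N)$. The projection relations $F_iF_j=\delta_{ij}F_i$ and $\sum_iF_i=1$ translate, via the identity $P(x\otimes 1)P=\tau_{N'}(x)P$ for $x\in N'$ and Lemma~\ref{l:me}, to $E_N(w_iw_j^*)=\delta_{ij}1$ and $\sum_i w_i^*e_Nw_i=1$, so $\{w_i\}$ is an orthonormal Pimsner--Popa basis of $M_n(\C)$ over $N$ in the normaliser.

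Finally, substituting $F_i=(w_i^*\otimes 1)P(w_i\otimes 1)$ into the reduced identity and using Lemma~\ref{l:me} to move $\widehat{T}_i(a)$ across $P$ reduces the equation to a single relation between $\widehat{T}_i$ and $\omega$. Solving this under the minimality constraint yields $\widehat{T}_i=\Ad(u_i)$ for unitaries $u_i\in \mc N_{M_n(\C)}(N)$; comparing with the shape of $F_i$ then forces $w_i=u^*u_i$ for a common unitary $u\in\mc N_{M_n(\C)}(N)$, and plugging back in uniquely determines $\omega=[M_n(\C):N](1\otimes z^{1/2}u)e_N(1\otimes u^*z^{1/2})$ for some positive invertible $z\in\mc{Z}(N)$. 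The unitary $u$ and the central element $z$ encode, respectively, a global gauge in Bob's identification of $B$ with $N'$ and the freedom in normalising the density on the central summands of $N'$. The main obstacle is the rigidity argument in the middle phase: Werner's original proof leans on the Choi isomorphism for $M_n(\C)$, and the subalgebra analogue will require an $N$-bimodular Choi-type identification for $N'\oten N'$ together with a delicate saturation argument that exploits tightness, faithfulness, and minimality simultaneously.
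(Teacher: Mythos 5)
Your opening reductions (minimality placing $\om$ in $1\ten N'\ten N'$ and $F_i$ in $N'\ten N'\ten 1$, and $A$-bimodularity collapsing $T_i$ to a UCP map on $N'$) match the paper, and your final answer is of course the right one. But the middle of your argument --- the actual rigidity step --- has a concrete error and then defers the remaining work to a ``delicate saturation argument'' that is precisely the part that does not transfer from Werner's setting. First, the error: you take $P$ to be the rank-one Jones projection for $\C\subseteq N'$ (projection onto the trace vector) and claim $F_i=(w_i^*\ten 1)P(w_i\ten 1)$ with the $F_i$ mutually orthogonal projections summing to $1$. Each such $F_i$ is rank one, so $d=\dim N'$ of them sum to an operator of rank $\dim N'<n^2$ unless $N=\C$; they cannot be a PVM on $\C^n\ten\C^n$. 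The relevant projection is $e_N$, the Jones projection for $N\subseteq M_n(\C)$, which has rank $\dim N$ and satisfies $e_Nxe_N=E_N(x)e_N$ rather than your $P(x\ten1)P=\tau_{N'}(x)P$; only then does the count $d\cdot\dim N=\dim N'\cdot\dim N=n^2$ (which uses the Markov-trace hypothesis) come out right, and only then do the PVM relations translate into the Pimsner--Popa conditions $E_N(w_iw_j^*)=\delta_{ij}1$ and $\sum_iw_i^*e_Nw_i=1$.

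Second, and more structurally: the paper explicitly notes that in this setting one cannot argue at the level of individual Kraus operators / maximally entangled vectors, which is what your ``Choi--Jamio\l kowski saturation'' step would require (it is the analogue of Werner's Proposition 3, which the paper circumnavigates). Instead the paper works at the level of CP maps and in the opposite order: it first decomposes $\om$ and each $F_i$ as $\sum_\alpha(w_\alpha^*\ten1)e_N(w_\alpha\ten1)$ via Lemma \ref{l:Choi} (yielding CP maps $\Phi,\Psi_i$ on $N'$), massages the teleportation identity into $\sum_i[M:N]^{-2}\Psi_i\circ\Psi\circ T_i=\id_{N'}$, applies the direct-sum generalisation of Werner's Lemma 7 (Lemma \ref{l:Lemma7}) to get that each composite is multiplication by a central $\sigma_i$, uses \emph{faithfulness} to make $\sigma_i$ invertible and hence $T_i$ a $*$-automorphism $\Ad(u_i)$, and only afterwards shows $\{\sqrt{[M:N]}z^{-1/2}u_i\sigma_i^{1/2}\}$ is a basis whose orthonormality follows from \emph{tightness} via the dimension count of Lemma \ref{l:ONB}; the forms of $\om$ and $F_i$ then drop out from the adjoint maps $\widetilde\Psi^*,\Psi_i^*$ being automorphisms. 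None of these ingredients (Lemmas \ref{l:Choi}, \ref{l:Lemma7}, \ref{l:ONB}, or the role of faithfulness in producing the invertible central $\sigma_i$ and $z$) appears in your outline, so as written the proposal does not close the gap it itself identifies as the main obstacle.
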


The proof requires several preparations. We follow the same general strategy as in \cite{w}, although our setting does not allow us to work at the level of individual Kraus operators. Instead, we mostly argue at the level of CP maps, which, incidentally, allows us to circumnavigate (the analogue of) \cite[Proposition 3]{w}, albeit we begin with the a priori stronger assumption of faithfulness.

The first Lemma is a simple (known) observation.

\begin{lemma}\label{l:normalise} Let $N\subseteq(M,\tau)$ be an inclusion of finite von Neumann algebras. Write $\cl N_M(N):=\{u\in \cl U(M)\mid u^*Nu=N\}$, and $\cl N_M(E_N):=\{u\in\mc{U}(M)\mid u^*E_N(x)u=E_N(u^*xu), \ x\in M\}$. Then we have the equality 
$$\cl N_M(N)=\cl N_M(E_N).$$
Moreover, any $u\in \mc{N}_M(N)$ satisfies
$$ue_Nu^*=\pi^r_\tau(u)e_N\pi^r_\tau(u^*).$$
\end{lemma}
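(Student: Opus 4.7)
The strategy is to establish the set equality by proving both inclusions, with the harder direction relying on the uniqueness of the $\tau$-preserving normal faithful conditional expectation (Takesaki's theorem), and then to deduce the formula for $u e_N u^*$ by computing both sides on $\Lambda_\tau(y)$.

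\textbf{Set equality.} The inclusion $\cl N_M(E_N)\subseteq \cl N_M(N)$ is immediate: if $u\in\cl N_M(E_N)$ and $x\in N$, then $u^*xu = u^*E_N(x)u = E_N(u^*xu)\in N$, so $u^*Nu\subseteq N$; applying the same argument to $u^*$ (which also lies in $\cl N_M(E_N)$ by symmetry of the defining condition) gives $uNu^*\subseteq N$, hence $u^*Nu=N$.

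For the reverse inclusion, given $u\in\cl N_M(N)$, the plan is to define
\[
\psi_u:M\to M,\qquad \psi_u(x):=uE_N(u^*xu)u^*,
\]
and verify that $\psi_u$ satisfies the hypotheses characterising $E_N$: it is normal, unital, positive, and maps into $N$ (using $u^*Nu=N$, equivalently $uNu^*=N$); it is an $N$-bimodule map (write $u^*nu,u^*mu\in N$ and pull them through $E_N$); and it is $\tau$-preserving by traciality of $\tau$ and the $\tau$-invariance of $E_N$. By the uniqueness clause of Takesaki's theorem (cited earlier as \cite[Theorem IX.4.2]{t2}), $\psi_u=E_N$, which rearranges to $uE_N(x)u^* = E_N(uxu^*)$ for all $x\in M$, and substituting $x\mapsto u^*xu$ yields $u^*E_N(x)u = E_N(u^*xu)$, i.e.\ $u\in\cl N_M(E_N)$.

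\textbf{The formula.} Fix $u\in\cl N_M(N)$ and $y\in M$. Using $e_N\Ltau(z)=\Ltau(E_N(z))$ and $\pi^r_\tau(v)\Ltau(z)=\Ltau(zv)$, compute
\[
u e_N u^*\Ltau(y) = \Ltau\bigl(uE_N(u^*y)\bigr),\qquad
\pi^r_\tau(u)e_N\pi^r_\tau(u^*)\Ltau(y) = \Ltau\bigl(E_N(yu^*)u\bigr).
\]
Equality of the two vectors reduces to the algebraic identity $uE_N(u^*y) = E_N(yu^*)u$, which follows from the first part: applying $uE_N(z)u^* = E_N(uzu^*)$ with $z=u^*y$ and multiplying on the right by $u$ gives exactly what is needed.

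\textbf{Main obstacle.} The only nontrivial step is the harder inclusion in the set equality; everything else is a direct computation. The key observation is that one must exhibit $\psi_u$ as a $\tau$-preserving conditional expectation onto $N$ to invoke Takesaki's uniqueness, and this in turn is what forces us to explicitly check normality, $N$-bimodularity, and the image condition — all of which follow routinely from $u^*Nu=N$ and traciality.
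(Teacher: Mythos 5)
Your proposal is correct and follows essentially the same route as the paper: the reverse inclusion is obtained by recognising $x\mapsto uE_N(u^*xu)u^*$ as a $\tau$-preserving normal conditional expectation onto $N$ and invoking the uniqueness clause of Takesaki's theorem, and the operator identity is verified by the same computation on $\Ltau(y)$, reducing to $uE_N(u^*y)=E_N(yu^*)u$. The only differences are cosmetic (you spell out the easy inclusion and the verification of the conditional-expectation axioms in slightly more detail than the paper does).
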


\begin{proof} The inclusion $\cl N_M(E_N)\subseteq \cl N_M(N)$ follows easily from the definitions. Given $u\in\cl N_M(N)$, it follows that $\mathrm{Ad}(u)\circ E_N\circ\mathrm{Ad}(u^*)$ is a $\tau$-preserving conditional expectation from $M$ to $N$. By uniqueness, $\mathrm{Ad}(u)\circ E_N\circ\mathrm{Ad}(u^*)=E_N$, which is the desired normalisation property.

If $u\in \mc{N}_M(N)=\cl N_M(E_N)$, then for all $x\in M$,

\begin{align*}u e_Nu^*\Ltau(x)&=\Ltau(uE_N(u^*x))=\Ltau(uE_N(u^*x)u^*u)=\Ltau(E_N(xu^*)u)\\
&=\pi^r_\tau(u)\Ltau(E_N(xu^*))=\pi^r_\tau(u)e_N\Ltau(xu^*)\\
&=\pi^r_\tau(u)e_N\pi^r_\tau(u^*)\Ltau(x).
\end{align*}
\end{proof}

The next two Lemmas generalise \cite[Lemma 7]{w} and \cite[Lemma 2]{w}, respectively.

\begin{lemma}\label{l:Lemma7}
Let $M$ be a finite-dimensional $C^*$-algebra and let $\{ T_i\}_{i=1}^d$ be a family of CP maps $ T_i : M \rightarrow M$ such that $\sum_{i=1}^d  T_i = \id_M$. If $M = \bigoplus_{j=1}^m z_jM$, where $z_1,...,z_m,$ are the minimal central projections of $M$, then for each $i,j$ there exists $\mu_i^j \in[0,\infty)$ such that 
$$T_i|_{z_jM} = \mu_i^j\id_{z_jM}.$$
\end{lemma}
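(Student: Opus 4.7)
The plan is to decompose each $T_i$ block-wise with respect to the central projections $z_j$, treating the off-diagonal and diagonal blocks separately. For each triple $(i,j,k)$ define
$$T_i^{(j,k)}\colon z_jM \to z_kM, \qquad T_i^{(j,k)}(x) := z_k T_i(x).$$
Each $T_i^{(j,k)}$ is CP (it is the composition of the CP map $T_i$ with the $*$-homomorphism $y\mapsto z_ky$), and the relation $\sum_i T_i=\id_M$ compresses, upon restricting to $z_jM$ and projecting onto $z_kM$, to
$$\sum_{i=1}^d T_i^{(j,k)} = \delta_{j,k}\,\id_{z_jM}.$$

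For $k\neq j$ the left side vanishes. Evaluating at the unit $z_j$ of $z_jM$ writes $0$ as a sum of positive elements of $z_kM$, forcing $T_i^{(j,k)}(z_j)=0$ for every $i$. Since a CP map from a unital $C^*$-algebra which annihilates the unit is identically zero (a standard consequence of $2$-positivity applied to the positive matrix $\bigl(\begin{smallmatrix}z_j & x\\ x^* & x^*x\end{smallmatrix}\bigr)$), we conclude $T_i^{(j,k)}\equiv 0$. Hence each $T_i$ leaves every summand $z_jM$ invariant.

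For $k=j$ I would pass to the Choi correspondence on the simple algebra $z_jM\cong M_{n_j}(\C)$. The Choi matrices $C_{T_i^{(j,j)}}\in M_{n_j}(\C)\otimes M_{n_j}(\C)$ are positive semidefinite and satisfy $\sum_i C_{T_i^{(j,j)}}=C_{\id_{z_jM}}$, which is a positive scalar multiple of the rank-one projection onto the maximally entangled vector $\ket{\Omega_{n_j}}$. A sum of positive semidefinite matrices equal to a rank-one positive matrix must have every summand supported inside that one-dimensional span, hence $C_{T_i^{(j,j)}}=\mu_i^j\,C_{\id_{z_jM}}$ for some $\mu_i^j\geq 0$. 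Inverting Choi gives $T_i^{(j,j)}=\mu_i^j\,\id_{z_jM}$, which is the desired conclusion.

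The argument is essentially elementary and I do not foresee a serious obstacle. The only substantive input is the rigidity fact that positive semidefinite matrices summing to a rank-one positive semidefinite matrix must each be nonnegative multiples of the same rank-one matrix; this is immediate from the observation that the kernel of the sum is contained in the kernel of every summand.
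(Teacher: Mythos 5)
Your proof is correct. It follows the same two-step strategy as the paper---first show that each $T_i$ preserves the central summands, then rigidify on each simple block---but the details differ at both steps. For the invariance step, the paper evaluates $T_i$ on a minimal projection $p$ and uses $0\le T_i(p)\le p$ together with $pMp=\C p$, whereas you compress by the central projections $z_k$ and invoke the fact that a CP map annihilating the unit vanishes; both are elementary and correct (in your version one does not even need the $2$-positivity trick, since for positive $x\in z_jM$ one has $0\le x\le \|x\| z_j$ and hence $0\le T_i^{(j,k)}(x)\le \|x\|\,T_i^{(j,k)}(z_j)=0$ already by positivity). For the simple blocks, the paper simply cites Lemma 7 of \cite{w}, while you reprove that lemma via the Choi correspondence and the observation that positive semidefinite matrices summing to a rank-one positive matrix are each nonnegative multiples of it---which is in essence Werner's own argument, so your write-up has the added merit of being self-contained. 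I see no gaps.
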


\begin{proof}
Let $p$ a minimal projection of $M$. Then, 
$$0 \leq  T_i(p) \leq \sum_i T_i(p) = p,$$
$ T_i(p) = p T_i(p)p \in pMp = \C p$ by minimality. It follows that $ T_i(z_jM) \subseteq z_jM$, for each $i$ and $j$. Then for each $j$, $ T_i|_{z_jM} = z_jM \rightarrow z_jM$ is CP and $\sum_i  T_i|_{z_jM} = \id_{z_jM}$. By \cite[Lemma 7]{w}, each $ T_i|_{z_jM} = \mu_i^j \id_{z_jM}$.
\end{proof}

\begin{remark} The coefficients $\mu_i^j$ depend on the summand $j$, in general, and can be zero even if $T_i\neq 0$. For instance, consider $T_1,T_2:\ell^\infty_2\rightarrow\ell^\infty_2$ given by
$$T_1(x,y)=(2^{-1}x,0), \ \ \ T_2(x,y)=(2^{-1}x,y), \ \ \ (x,y)\in\ell^\infty_2.$$
Then $T_1$ and $T_2$ are CP and $T_1+T_2=\id_{\ell^\infty_2}$. The associated coefficients are $\mu_1^1=\frac{1}{2}$, $\mu_1^2=0$, $\mu_2^1=\frac{1}{2}$, $\mu_2^2=1$.
\end{remark}

\begin{lemma}\label{l:ONB}
Let $N \subseteq (M,\tau)$ be a strongly Markov inclusion of finite-dimensional von Neumann algebras. A basis $\{\lambda_i\}_{i=1}^d$ for $M$ over $N$ is orthonormal (i.e., $E_N(\lm_i\lm_j^*)=\delta_{i,j}1$) if and only if,
$$d = \frac{\dim M}{\dim N}.$$
\end{lemma}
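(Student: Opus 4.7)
My plan is to identify $M$ with the free left $N$-module $N^d$ via the $N$-linear map $\phi\colon N^d\to M$, $\phi(a_1,\dots,a_d)=\sum_{i=1}^d a_i\lambda_i$, so that once $\phi$ is shown to be a $\C$-linear isomorphism the equality $d\dim N=\dim M$ is immediate. For the forward direction, assume $E_N(\lambda_i\lambda_j^*)=\delta_{ij}1$. The equivalent Pimsner--Popa formulation $x=\sum_i E_N(x\lambda_i^*)\lambda_i$ from the definition immediately yields surjectivity of $\phi$. For injectivity, if $\sum_i a_i\lambda_i=0$ with $a_i\in N$, then applying $E_N(\,\cdot\,\lambda_j^*)$ and using $N$-bimodularity of $E_N$ (i.e.\ $E_N(ayb)=aE_N(y)b$ for $a,b\in N$) together with orthonormality gives $a_j=\sum_i a_iE_N(\lambda_i\lambda_j^*)=0$. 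Hence $\phi$ is a $\C$-linear isomorphism and $d\dim N=\dim M$.

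For the converse, I would assume $d=\dim M/\dim N$. The Pimsner--Popa basis relation (which does \emph{not} require orthonormality) already gives surjectivity of $\phi$, and since $\dim_\C N^d=d\dim N=\dim M$, a finite-dimensional dimension count forces $\phi$ to be a $\C$-linear isomorphism, in particular injective. Applying the basis relation to $x=\lambda_j$ yields $\lambda_j=\phi(E_N(\lambda_j\lambda_1^*),\dots,E_N(\lambda_j\lambda_d^*))$, while trivially $\lambda_j=\phi(e_j)$ where $e_j\in N^d$ has $1$ in slot $j$ and $0$ elsewhere. Injectivity of $\phi$ then forces $E_N(\lambda_j\lambda_i^*)=\delta_{ij}1$, which is orthonormality.

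The entire argument reduces to finite-dimensional linear algebra once the two equivalent PP-basis formulations have been recorded, so no serious obstacle arises. The only step requiring minor care is the $N$-bimodularity computation yielding injectivity of $\phi$ in the forward direction; the converse is a purely formal dimension-counting and uniqueness argument. Note that neither direction invokes the Markov or trace structure beyond what is already built into the definitions of $E_N$ and of a Pimsner--Popa basis.
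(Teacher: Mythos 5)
Your proof is correct, and it takes a genuinely different route from the paper's. You work purely algebraically: the left $N$-module map $\phi\colon N^d\to M$, $\phi(a_1,\dots,a_d)=\sum_i a_i\lm_i$, is surjective by the reconstruction formula $x=\sum_i E_N(x\lm_i^*)\lm_i$, and the two directions become, respectively, an injectivity check via $N$-bimodularity of $E_N$ (orthonormality $\Rightarrow$ unique coordinates $\Rightarrow$ $d\dim N=\dim M$) and a dimension count forcing uniqueness of coordinates (so comparing the two coordinate vectors of $\lm_j$ yields $E_N(\lm_j\lm_i^*)=\delta_{ij}1$). The paper instead argues in $L^2(M,\tau)$: it writes $e_N=\sum_n\kb{e_n}{e_n}$, expands $1=\sum_{i,n}\lm_i^*\kb{e_n}{e_n}\lm_i$, and invokes Werner's Lemma 2 to conclude that $\{\lm_i^*\ket{e_n}\}$ is an orthonormal basis of $L^2(M,\tau)$ when $d\dim N=\dim M$; the converse counts ranks of the mutually orthogonal projections $\lm_i^*e_N\lm_i$. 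Your argument is shorter and avoids the Hilbert-space machinery entirely, at the cost of not producing the auxiliary geometric facts the paper extracts along the way --- the orthogonal decomposition $L^2(M,\tau)=\bigoplus_i\lm_i^*L^2(N,\tau)$ and the isometricity of each $\lm_i^*$ on $L^2(N,\tau)$ --- which are reused later (e.g.\ in the proof of Theorem \ref{t:finite}). Your closing remark is also accurate: beyond the bimodule property and the defining identity of a Pimsner--Popa basis, no trace or Markov structure is needed.
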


\begin{proof}
Assume $d = \dim M/\dim N$, and let $\{ e_n \ | \ n =1,...,\dim N\}$ be an orthonormal basis for $L^2(N,\tau)$. Then $e_N = \sum_{n=1}^{\dim N} |e_n\ra\la e_n|$, so that 
$$1_{L^2(M,\tau)} 
= \sum_{i=1}^d \lambda_i^* e_N \lambda_i 
= \sum_{i=1}^d \sum_{n=1}^{\dim N} \lambda_i^* |e_n\ra\la e_n| \lambda_i.$$
Since $d\dim N = \dim M = \dim(L^2(M,\tau))$, by \cite[Lemma 2]{w}, the set $\{\lambda_i^*|e_n\ra \ | \ i =1,...,d; \ n=1,...,\dim N \},$ is an orthonormal basis of $L^2(M,\tau)$. In particular, the subspaces $\lambda_i^*L^2(N,\tau)$ are orthogonal, $\{\lambda_i^*e_N\lambda_i\}_{i=1}^d$ are mutually orthogonal projections and each $\lm_i^*$ acts isometrically on $L^2(N,\tau)$. Hence, for every $\xi,\eta\in L^2(M,\tau)$
\begin{align*}\la E_N(\lm_i\lm_j^*)e_N\xi,\eta\ra&=\la e_N\lm_i\lm_j^*e_N\xi,\eta\ra\\
&=\la \lm_j^*e_N\xi,\lm_i^*e_N\eta\ra\\
&=\delta_{i,j}\la \lm_i^*e_N\xi,\lm_i^*e_N\eta\ra\\
&=\delta_{i,j}\la e_N\xi,\eta\ra.\\
\end{align*}
Since the map $N \ni y \rightarrow ye_N \in \mc{B}(L^2(M,\tau))$ is injective (by faithfulness of $\tau$), $E_N(\lambda_i\lambda_j^*) = \delta_{ij}1$.

Conversely, suppose $\{ \lambda_i \}_{i=1}^d$ is orthonormal. Then, 
$$(\lambda_i^*e_N\lambda_i)(\lambda_j^*e_N\lambda_j)=\lambda_i E_N(\lm_i\lm_j^*)e_N\lm_j=\delta_{i,j}\lm_i^*e_N\lm_i,$$
so $\lambda_i^*e_N\lambda_i$ are mutually orthogonal projections, and $\lambda_i^*$ is isometric on $L^2(N,\tau):$
\begin{align*}
\langle \lambda_i^*e_N\xi, \lambda_i^*e_N\eta \rangle 
&= \langle e_N\lambda_i\lambda_i^*e_N\xi, \eta \rangle \\ 
&= \langle E_N(\lambda_i\lambda_i^*)e_N \xi, \eta \rangle \\ 
&= \langle e_N\xi, \eta \rangle \\
&= \langle e_N\xi, e_N\eta \rangle.
\end{align*} 
It follows that each projection $\lambda_i^*e_N\lambda_i$ has rank $\dim N$. The identity $1_{L^2(M,\tau)} = \sum_{i=1}^d \lambda_i^*e_N\lambda_i$ then implies that $\dim M = d\dim N$.
\end{proof}

The next Lemma will produce useful decompositions for the operations in the teleportation scheme of Theorem \ref{t:Werner}.
 
\begin{lemma}\label{l:Choi}
Let $N\subseteq M$ be a strongly Markov inclusion with basis $\{\lm_i\}_{i=1}^d$. Then for each positive element $x_1\in N'\cap M_1$, there exists $\{a_i\}_{i=1}^d$ in $M$ such that
$$x_1=\sum_{i=1}^d a_i^*e_Na_i \ \ \ \textnormal{and} \ \ \ \sum_{i=1}^da_i^*(N'\cap M)a_i\subseteq N'\cap M.$$
Moreover, the CP map $\sum_ia_i^*(\cdot)a_i$ on $N'\cap M$ is independent of the chosen basis $\{\lm_i\}_{i=1}^d$. If, in addition, $M$ admits a unitary basis $\{u_i\}_{i=1}^d$ over $N$ in $\mc{N}_M(N)$, then we can choose the $a_i$ so that $a_i^*(N'\cap M)a_i, a_i(N'\cap M)a_i^*\subseteq N'\cap M$. 
\end{lemma}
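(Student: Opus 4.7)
The plan is to produce the Kraus decomposition by a pull-down argument and then to establish invariance and basis-independence by transporting the problem to the bimodule tensor product $M\otimes_N M$ via the canonical identification $\mathcal F\colon M\otimes_N M\to M_1$ determined by $a\otimes_N b\mapsto ae_Nb$, which is a linear isomorphism in the strongly Markov setting.

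For the decomposition I would first write $x_1=y^*y$ with $y\in M_1$, and then apply the pull-down identity to each $y\lm_i^*e_N$ to produce $b_i:=[M:N]E_M(y\lm_i^*e_N)\in M$ satisfying $y=\sum_i b_ie_N\lm_i$. Expanding $y^*y$ and using $e_Nze_N=E_N(z)e_N$ yields $x_1=\sum_{i,j}\lm_i^*E_N(b_i^*b_j)e_N\lm_j$, and the Gram matrix $B:=[E_N(b_i^*b_j)]\in M_d(N)$ is positive. Taking a factorization $C^*C=B$ in $M_d(N)$ and setting $a_k:=\sum_j c_{kj}\lm_j\in M$ then gives $x_1=\sum_{k=1}^d a_k^*e_Na_k$, since each $c_{kj}\in N$ commutes with $e_N$.

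For the invariance and basis-independence, under $\mathcal F$ the element $x_1$ corresponds to $X:=\sum_k a_k^*\otimes_N a_k$, and the hypothesis $x_1\in N'$ transports to the relation $nX=Xn$ for every $n\in N$ (with $N$ acting by left multiplication on the first tensor leg and by right multiplication on the second). For each $y\in N'\cap M$, the formula $\phi_y(a\otimes_N b):=ayb$ descends to a well-defined linear map $\phi_y\colon M\otimes_N M\to M$, the $N$-balancing reducing exactly to $yn=ny$. Thus $\Psi(y):=\sum_k a_k^*ya_k=\phi_y(X)$ depends only on $X$, giving basis-independence, and applying $\phi_y$ to $nX=Xn$ gives $n\Psi(y)=\Psi(y)n$, so $\Psi(y)\in N'\cap M$.

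In the normalizer-basis case, setting $p_k:=u_k^*e_Nu_k$ yields a partition of unity by projections in $N'\cap M_1$, and the computation $p_kM_1p_k=u_k^*e_NM_1e_Nu_k=u_k^*Ne_Nu_k=Np_k$ (using $u_k\in\mc N_M(N)$) gives $p_k(N'\cap M_1)p_k=Z(N)p_k$, hence $N'\cap M_1=\bigoplus_k Z(N)p_k$. Positivity of $x_1$ forces $x_1=\sum_k\zeta_kp_k$ with $\zeta_k\in Z(N)^+$; pushing $\zeta_k$ through $u_k^*$ rewrites each summand as $\zeta_kp_k=(\beta_k^{1/2}u_k)^*e_N(\beta_k^{1/2}u_k)$ with $\beta_k:=u_k\zeta_ku_k^*\in Z(N)^+$ central in $N$. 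A short check yields $a_k:=\beta_k^{1/2}u_k\in\mc N_M(N)$, and then $a_k^*(N'\cap M)a_k\subseteq N'\cap M$ and $a_k(N'\cap M)a_k^*\subseteq N'\cap M$ follow from the standard normalizer computation together with the fact that $y\in N'$ commutes with both $\alpha_k(n):=u_knu_k^*$ and $\alpha_k^{-1}(n):=u_k^*nu_k$ in $N$. The main difficulty I anticipate is justifying the isomorphism $\mathcal F$, and in particular its injectivity, in the full strongly Markov generality: this is classical for finite-index $\mathrm{II}_1$-factor inclusions and for connected finite-dimensional $C^*$-inclusions, and should bootstrap to strongly Markov inclusions from a Pimsner--Popa basis by the dimension/uniqueness count of Lemma~\ref{l:ONB}.
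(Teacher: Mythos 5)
Your first two parts are correct and take a genuinely different route from the paper's. For the decomposition the paper simply takes $y=\sqrt{x_1}$, so the pull-down identity gives $a_i^*e_N=\sqrt{x_1}\lm_i^*e_N$ with $a_i^*=[M:N]E_M(\sqrt{x_1}\lm_i^*e_N)$ and hence $x_1=\sum_i a_i^*e_Na_i$ at once, with no Gram matrix to factor; your version with a general factorisation $x_1=y^*y$ and $B=C^*C$ in $M_d(N)$ also works (it is $e_N\in N'$ that lets the $c_{kj}$ slide past $e_N$). For the invariance and basis-independence the paper computes directly: $\sum_i a_i^*xa_i=[M:N]E_M(x_1\gamma_0(x))\in N'\cap M$ using Lemma~\ref{l:me} and $N$-bimodularity of $E_M$, and a change of basis is handled by expanding $a_i^*=\sum_j b_j^*E_N(\mu_j\lm_i^*)$. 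Your argument through $M\otimes_N M$ is cleaner and proves strictly more: the map $\sum_k a_k^*(\cdot)a_k$ on $N'\cap M$ depends only on $x_1$, not on the particular Kraus decomposition $x_1=\sum_k a_k^*e_Na_k$ at all. The injectivity of $\mathcal F$ that you flag is genuine but cheap once a Pimsner--Popa basis exists --- though not via Lemma~\ref{l:ONB}, which is finite-dimensional and about orthonormality. Instead: every element of $M\otimes_N M$ can be written $\sum_j\lm_j^*\otimes_N c_j$ using $a=\sum_j\lm_j^*E_N(\lm_j a)$; if $\sum_j\lm_j^*e_Nc_j=0$, left multiplication by $e_N\lm_i$ gives $e_Nd_i=0$ where $d_i=\sum_jE_N(\lm_i\lm_j^*)c_j$, whence $E_N(d_i^*d_i)=0$ and $d_i=0$ by faithfulness; re-expanding $\lm_j^*=\sum_i\lm_i^*E_N(\lm_i\lm_j^*)$ shows the original tensor equals $\sum_i\lm_i^*\otimes_N d_i=0$. (Alternatively, this identification is standard in the strongly Markov setting, cf.~\cite{jp}.)

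The normaliser case, however, has a genuine gap. From $\sum_k p_k=1$ and $p_k(N'\cap M_1)p_k=Z(N)p_k$ you may only conclude that the \emph{diagonal} corners of an element of $N'\cap M_1$ lie in $Z(N)p_k$; the off-diagonal corners $p_kxp_l$ ($k\neq l$) are untouched, so $N'\cap M_1=\bigoplus_k Z(N)p_k$ is false. Concretely, for $N=\C\subseteq M_n(\C)$ one has $N'\cap M_1=M_n(\C)\ten M_n(\C)$, of dimension $n^4$, while your right-hand side has dimension $n^2$: each $p_k$ is the rank-one projection onto the maximally entangled vector $u_k^*\psi_n$, so $\sum_k\zeta_kp_k$ is diagonal in that orthonormal basis, and any positive $x_1$ not diagonal in it (say, a rank-one projection onto a generic vector) refutes ``positivity forces $x_1=\sum_k\zeta_kp_k$''. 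This case cannot be discarded, since in the intended application (Theorem~\ref{t:Werner}) $\om$ and the $F_i$ are arbitrary positive elements of $N'\ten N'$. The repair is the paper's: keep the canonical pull-down operators $a_i^*=[M:N]E_M(\sqrt{x_1}\,u_i^*e_N)$ --- the essential point being $\sqrt{x_1}\in N'$ --- and verify \emph{termwise} that $a_i^*xa_i$ and $a_ixa_i^*$ commute with every $y\in N$: push $y$ inside $E_M$ by bimodularity, commute it past $\sqrt{x_1}$, write $yu_i^*=u_i^*(u_iyu_i^*)$ with $u_iyu_i^*\in N$ commuting with $e_N$, move it past $x\in N'$, and let it re-emerge from the other $E_M$ factor on the far side.
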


\begin{proof}
By positivity of $x_1$, we get  $x_1=\sqrt{x_1}\cdot\sqrt{x_1}=\sum_{i=1}^d\sqrt{x_1}\lm_i^*e_N\lm_i\sqrt{x_1}$.
Since $\sqrt{x_1}\lm_i^*\in M_1$, there exists a unique element $a_i^*\in M$ such that $a_i^*e_N=\sqrt{x_1}\lm_i^*e_N$ \cite[Lemma 4.3.1, Remark 4.3.2(a)]{js}, namely $a_i^*=[M:N]E_M(\sqrt{x_1}\lm_i^*e_N)$. The first equation follows. 

As for the inclusion, noting that $a_i^*=[M:N]E_M(a_i^*e_N)$, for $x\in N'\cap M$ we have
\begin{align*}
a_i^*xa_i&=[M:N]E_M(a_i^*e_N)xa_i=[M:N]E_M(a_i^*e_Nxa_i)=[M:N]E_M(a_i^*e_N\gamma_0(x)a_i) \ \ \ (\textnormal{Lemma \ref{l:me}})\\
&=[M:N]E_M(a_i^*e_Na_i\gamma_0(x)) \ \ \ (\gamma_0(x)\in M'\cap M_1),
\end{align*}
and $\sum_{i=1}^d a_i^*e_Na_i=x_1$, so $\sum_{i=1}^d a_i^*xa_i=[M:N]E_M(x_1\gamma_0(x))$ 
which belongs to $N'\cap M$ since $x_1,\gamma_0(x)\in N'$. If $\{\mu_j\}$ was another basis for $M$ over $N$, and $b_j^*=[M:N]E_M(\sqrt{x_1}\mu_j^*e_N)$, then decomposing $\lm_i$ relative to $\{\mu_j\}$, we see that
\begin{align*}a_i^*&=[M:N] E_M(\sqrt{x_1}\lm_i^*e_N)\\
&=[M:N] \sum_{j}E_M(\sqrt{x_1}\mu_j^*E_N(\mu_j\lm_i^*)e_N)\\
&=[M:N] \sum_{j}E_M(\sqrt{x_1}\mu_j^*e_N)E_N(\mu_j\lm_i^*)\\
&=\sum_j b_j^*E_N(\mu_j\lm_i^*).\\
\end{align*}
Hence, for every $x\in N'\cap M$, we have 
\begin{align*}\sum_i a_i^*xa_i&=\sum_i\sum_{j,k} b_j^*E_N(\mu_j\lm_i^*)xE_N(\lm_i\mu_k^*)b_k\\
&=\sum_i\sum_{j,k} b_j^*E_N(\mu_j\lm_i^*E_N(\lm_i\mu_k^*))xb_k\\
&=\sum_{j,k} b_j^*E_N(\mu_j\mu_k^*)xb_k\\
&=[M:N]\sum_{j,k} E_M(\sqrt{x_1}\mu_j^*e_N)E_N(\mu_j\mu_k^*)xb_k\\
&=[M:N]\sum_{j,k} E_M(\sqrt{x_1}\mu_j^*E_N(\mu_j\mu_k^*)e_N)xb_k\\
&=[M:N]\sum_{k} E_M(\sqrt{x_1}\mu_k^*e_N)xb_k\\
&=\sum_{k} b_k^*xb_k.
\end{align*}

Now, suppose, in addition, that each $\lm_i=u_i\in \mc{N}_M(N)$ and $x_1\in (N'\cap M_1)^+$. Then defining the $a_i$ as above, for every $x\in N'\cap M$ and $y\in N$, we have

\begin{align*}y(a_i^*xa_i)&=[M:N]^2yE_M(\sqrt{x_1}u_i^*e_N)xE_M(e_Nu_i\sqrt{x_1})\\
&=[M:N]^2E_M(y\sqrt{x_1}u_i^*e_N)xE_M(e_Nu_i\sqrt{x_1}) \ \ \ \ \textnormal{(bimodule property of $E_M$)}\\
&=[M:N]^2E_M(\sqrt{x_1}(yu_i^*)e_N)xE_M(e_Nu_i\sqrt{x_1}) \ \ \ \ \textnormal{($\sqrt{x_1}\in N'$)}\\
&=[M:N]^2E_M(\sqrt{x_1}u_i^*(u_iyu_i^*)e_N)xE_M(e_Nu_i\sqrt{x_1})\\
&=[M:N]^2E_M(\sqrt{x_1}u_i^*e_N)xE_M(e_N(u_iyu_i^*)u_i\sqrt{x_1}) \ \ \ \ \textnormal{($u_iyu_i^*\in N$)}\\
&=[M:N]^2E_M(\sqrt{x_1}u_i^*e_N)xE_M(e_Nu_iy\sqrt{x_1})\\
&=[M:N]^2E_M(\sqrt{x_1}u_i^*e_N)xE_M(e_Nu_i\sqrt{x_1})y\\
&=(a_i^*xa_i)y.
\end{align*}

Similarly, 

\begin{align*}y(a_ixa_i^*)&=[M:N]^2yE_M(e_Nu_i\sqrt{x_1})xE_M(\sqrt{x_1}u_i^*e_N)\\
&=[M:N]^2E_M(e_N(yu_i)\sqrt{x_1})xE_M(\sqrt{x_1}u_i^*e_N)\\
&=[M:N]^2E_M(e_Nu_i(u_i^*yu_i)\sqrt{x_1})xE_M(\sqrt{x_1}u_i^*e_N)\\
&=[M:N]^2E_M(e_Nu_i\sqrt{x_1})xE_M(\sqrt{x_1}(u_i^*yu_i)u_i^*e_N) \ \ \ \ \textnormal{($u_i^*yu_i\in N$)}\\
&=[M:N]^2E_M(e_Nu_i\sqrt{x_1})xE_M(\sqrt{x_1}u_i^*ye_N)\\
&=(a_ixa_i^*)y.
\end{align*}
\end{proof}

Finally, we require the following equivalence with the trace assumption in Theorem \ref{t:Werner}, which will be used in conjunction with tightness to deduce orthonormality of the constructed basis.

\begin{lemma}\label{l:Markovtr} Let $N\subseteq (M_n(\C),\tau) $ be an inclusion such that $N=\bigoplus_{j=1}^JM_{n_j}(\C)\ten 1_{m_j}$. Then $\tau|_{N'}$ is the Markov trace for the inclusion $\C\subseteq N'$ if and only if 
$$\frac{n_j}{m_j}=\frac{n}{\mathrm{dim}N'}, \ \ \ j=1,...,J.$$
When this is the case, $(\id\ten\tau)e_N=(\tau\ten\id)e_N=[M_n(\C):N]^{-1}1$.

\end{lemma}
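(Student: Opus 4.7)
The plan is to work entirely in the GNS identification $L^2(M_n(\C),\tau)\cong \C^n\ten\C^n$ via $\Lambda_\tau(e_{ij})=\tfrac{1}{\sqrt{n}}|i\ra\ten|j\ra$, fixing an orthonormal basis $\{|j;a;b\ra:1\le j\le J,\ 1\le a\le n_j,\ 1\le b\le m_j\}$ of $\C^n$ adapted to the block decomposition $\C^n=\bigoplus_j\C^{n_j}\ten\C^{m_j}$. In this basis $N$ acts as $\bigoplus_j a_j\ten 1_{m_j}$, the minimal central projections of $N$ are $q_j:=\sum_{a,b}|j;a;b\ra\la j;a;b|$, and $N'=\bigoplus_j 1_{n_j}\ten M_{m_j}(\C)$. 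Everything then reduces to explicit finite-dimensional bookkeeping; no subfactor machinery beyond what is already stated in the excerpt is needed.

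For the ``iff'' half, I would evaluate $\tau$ directly on each central summand of $N'$: for $y=\bigoplus_j(1_{n_j}\ten y_j)$ one reads off $\tau(y)=\tfrac{1}{n}\sum_j n_j\tr_{m_j}(y_j)$. Comparing with the Markov trace formula $\tau_{N'}(y)=\tfrac{1}{\dim N'}\sum_j m_j\tr_{m_j}(y_j)$ for $\C\subseteq N'=\bigoplus_j M_{m_j}(\C)$ (recalled in the discussion preceding Proposition~\ref{p:directsum}), equality on every summand is precisely $\tfrac{n_j}{n}=\tfrac{m_j}{\dim N'}$, i.e., the stated condition.

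For the second assertion, I would take as an orthonormal basis of $(N,\tau|_N)$ the rescaled matrix units $\sqrt{n/m_j}\,E_{kl}^{(j)}$, where $E_{kl}^{(j)}:=e_{kl}^{(j)}\ten 1_{m_j}$ is supported in the $j$-th block (one checks $\tau(E_{kl}^{(j)*}E_{k'l'}^{(j')})=\delta_{jj'}\delta_{kk'}\delta_{ll'}\,m_j/n$). Pushing this through the GNS identification, the spectral resolution of the Jones projection becomes
\[
e_N=\sum_{j,k,l}\tfrac{1}{m_j}\sum_{b,b'}|j;k;b\ra\la j;k;b'|\ten|j;l;b\ra\la j;l;b'|.
\]
Tracing out either tensor leg then collapses $b=b'$ and, from one of the two matrix-index sums, produces an overall factor $n_j$, so both $(\id\ten\tau)e_N$ and $(\tau\ten\id)e_N$ equal $\tfrac{1}{n}\sum_j\tfrac{n_j}{m_j}q_j$ (consistent with the tensor-flip symmetry of $e_N$). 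Under the Markov hypothesis this collapses to $\tfrac{1}{\dim N'}\cdot 1$, matching $[M_n(\C):N]^{-1}$ because the inclusion matrix $\Lambda_N^{M_n(\C)}$ has entries $(m_j)_j$, whence $[M_n(\C):N]=\|\Lambda_N^{M_n(\C)}\|^2=\sum_j m_j^2=\dim N'$. The only mildly delicate step is writing $e_N$ cleanly in the tensor basis and keeping the indices straight through the partial traces; after that nothing is subtle, so I do not anticipate any real obstacle.
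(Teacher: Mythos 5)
Your proof is correct. The first half---comparing the trace vector $\tfrac1n(n_1,\dots,n_J)$ of $\tau|_{N'}$ with the Markov trace vector $\tfrac{1}{\dim N'}(m_1,\dots,m_J)$---is exactly the paper's argument. For the second assertion you take a genuinely different route: the paper identifies $M_1=JN'J=M_n(\C)\otimes N'$, observes that the $\tau_1$-preserving expectation $E_{M_n(\C)}:M_1\to M_n(\C)$ is $\id\otimes\tau_{N'}$ with $\tau_{N'}$ the Markov trace on $N'$, reads off $(\id\otimes\tau)e_N=E_{M_n(\C)}(e_N)=[M_n(\C):N]^{-1}1$ from the general Markov identity $E_M(e_N)=[M:N]^{-1}1$, and gets the other partial trace from $Je_NJ=e_N$. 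You instead expand $e_N$ as the sum of rank-one projections onto the GNS images of the orthonormal basis $\sqrt{n/m_j}\,e^{(j)}_{kl}\otimes 1_{m_j}$ of $(N,\tau|_N)$ and trace out each leg by hand; your formula for $e_N$ and the resulting identity $(\id\otimes\tau)e_N=(\tau\otimes\id)e_N=\tfrac1n\sum_j\tfrac{n_j}{m_j}q_j$ check out, and they hold for an arbitrary inclusion of the given form. This is more elementary and in fact slightly stronger---it shows the partial traces are scalar precisely when the Markov condition holds, so it reproves the equivalence---at the cost of having to identify $[M_n(\C):N]=\|\Lambda_N^{M_n(\C)}\|^2=\sum_j m_j^2=\dim N'$ separately via the inclusion matrix (legitimate here, since $M_n(\C)$ has a unique tracial state, so $\tau$ is automatically the Markov trace for the connected inclusion $N\subseteq M_n(\C)$ and the preliminaries apply). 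The paper's route avoids that explicit index computation by invoking the abstract Markov property directly. No gaps.
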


\begin{proof} Since $N'=\bigoplus_{j=1}^J1_{n_j}\ten M_{m_j}(\C)$, the trace vector associated to $\tau|_{N'}$ is $\frac{1}{n}(n_1,...,n_J)$. Since the Markov trace on $N'$ has trace vector $\frac{1}{\mathrm{dim}N'}(m_1,...,m_J)$, the first claim follows.

Let $M_1=\la M_n(\C),e_N\ra$ be the result of the basic construction. Then (abusing notation) $M_1=JN'J=M_n(\C)\ten N'$ and the canonical conditional expectation $E_{M_n(\C)}:M_1=M_n(\C)\ten N'\rightarrow M_n(\C)$ is simply $(\id\ten\tau_{N'})$, where $\tau_{N'}$ is the Markov trace for the inclusion $\C\subseteq N'$. Thus, assuming $\tau|_{N'}=\tau_{N'}$, the Markov property implies that
$$(\id\ten\tau)e_N=E_{M_n(\C)}(e_N)=[M_n(\C):N]^{-1}1.$$
As $e_N=Je_NJ$, we also have
$$(\tau\ten\id)e_N=(\tau\ten\id)Je_NJ=\overline{(\id\ten\tau)e_N}=[M_n(\C):N]^{-1}1.$$
\end{proof}

We are now in position to prove the main result of the section.

\begin{proof}[Proof of Theorem \ref{t:Werner}] Throughout the proof we let $M:=M_n(\C)$. Transposition/complex conjugation on $M_n(\C)$ will be taken relative to a block-diagonalising basis for $N'$.


Suppose $(\om,\{F_i\}_{i=1}^d,\{T_i\}_{i=1}^d)$ is a tight, minimal faithful teleportation scheme for $N'$, with respect to the bipartite system
$$M_n(\C)\ten M_n(\C)\ten 1, 1\ten 1\ten N'\subseteq M_n(\C)\ten M_n(\C)\ten N',$$
where we take $A_0=N'\ten 1\ten 1$ and $A_1=1\ten N'\ten 1$ inside $A=M_n(\C)\ten M_n(\C)\ten 1$, and $B=1\ten 1\ten N'$. Since $T_i$ is an $A$-bimodule map, it must be supported solely on Bob's algebra $B$. We therefore view each $T_i$ as a UCP map on $N'$.

Tightness means $d=\mathrm{dim}N'$, and minimality ensures that 
\begin{itemize}
\item $\{F_i\}_{i=1}^d\subseteq A_0\vee A_1= N'\ten N'\ten 1$, and
\item $\om\in A_1\vee B= 1\ten N'\ten N'$.
\end{itemize}
The teleportation identity then reads
\begin{equation}\label{e:tproof}x=\sum_{i=1}^d (E_{N'}\ten\tau\ten\tau)((F_i\ten 1)(1\ten1\ten T_i(x))(1\ten \om)), \ \ \ x\in N'.\end{equation}

Fixing a Pimsner-Popa basis $\{\lm_\alpha\}$ for $M$ over $N$ (which always exists for connected inclusions of finite-dimensional $C^*$-algebras \cite{b}), by Lemma \ref{l:Choi}, there exist families $\{w_\alpha\}$, $\{a_{i,\beta}\}$ in $M$ (indexed by the same set) satisfying
$$\om=\sum_{\alpha}(w_\alpha^*\ten 1)e_N(w_\alpha\ten 1), \ \ \ F_i=\sum_\beta (a_{i,\beta}^*\ten 1)e_N(a_{i,\beta}\ten 1), \ \ \ i=1,...,d.$$
Expanding the argument of $(E_{N'}\ten\tau\ten\tau)$ from equation (\ref{e:tproof}), we can rearrange and apply entanglement of $e_N$ (Lemma \ref{l:me}) as follows
\begin{align*}&(F_i\ten 1)(1\ten1\ten T_i(x))(1\ten \om)\\
&=\sum_{\alpha,\beta}((a_{i,\beta}^*\ten 1)e_N(a_{i,\beta}\ten 1)\ten 1)(1\ten 1\ten T_i(x))(1\ten ((w_\alpha^*\ten 1)e_N(w_\alpha\ten 1)))\\
&=\sum_{\alpha,\beta}((a_{i,\beta}^*\ten 1\ten 1)(e_N\ten 1)(a_{i,\beta}\ten w_\alpha^*\ten 1)(1\ten (1\ten T_i(x)e_N(w_\alpha\ten 1)))\\
&=\sum_{\alpha,\beta}((a_{i,\beta}^*\ten 1\ten 1)(e_N\ten 1)(a_{i,\beta}\ten w_\alpha^*T_i(x)^t\ten 1)(1\ten (e_N(w_\alpha\ten 1))).
\end{align*}
But then, as $\tau|_{N'}$ is the Markov trace for $\C\subseteq N'$, Lemma \ref{l:Markovtr} implies
\begin{align*}
&(E_{N'}\ten\tau\ten\tau)((F_i\ten 1)(1\ten1\ten T_i(x))(1\ten \om))\\
&=\sum_{\alpha,\beta}(E_{N'}\ten\tau\ten\tau)((a_{i,\beta}^*\ten 1\ten 1)(e_N\ten 1)(a_{i,\beta}\ten w_\alpha^*T_i(x)^t\ten 1)(1\ten (e_N(w_\alpha\ten 1)))\\
&=[M:N]^{-1}\sum_{\alpha,\beta}(E_{N'}\ten\tau)((a_{i,\beta}^*\ten 1)e_N(a_{i,\beta}\ten w_\alpha^*T_i(x)^tw_\alpha))\\
&=[M:N]^{-1}(E_{N'}\ten\tau)((\Psi_i\ten \id)(e_N)(1\ten \Phi(T_i(x)^t))),\\
\end{align*}
where $\Phi:=\sum_\alpha w_\alpha^*(\cdot)w_\alpha$ and $\Psi_i:=\sum_{\beta}a_{i,\beta}^*(\cdot)a_{i,\beta}$ are CP maps $N'\rightarrow N'$ (by Lemma \ref{l:Choi}). Applying entanglement of $e_N$ and the Markov property once again, we have
\begin{align*}&(E_{N'}\ten\tau\ten\tau)((F_i\ten 1)(1\ten1\ten T_i(x))(1\ten \om))\\
&=[M:N]^{-1}(E_{N'}\ten\tau)((\Psi_i\ten \id)(e_N(1\ten \Phi(T_i(x)^t))))\\
&=[M:N]^{-1}(E_{N'}\ten\tau)((\Psi_i\ten \id)(e_N(\Phi(T_i(x)^t)^t\ten 1)))\\
&=[M:N]^{-2}E_{N'}(\Psi_i(\Phi(T_i(x)^t)^t))\\
&=[M:N]^{-2}\Psi_i(\Phi(T_i(x)^t)^t)\\
&=[M:N]^{-2}\Psi_i(\Psi(T_i(x))),
\end{align*}
where $\Psi:=t\circ\Phi\circ t=\sum_{\alpha}w_{\alpha}^t(\cdot)\overline{w_\alpha}$ is a CP map $N'\rightarrow N'$ (as $N'$ is invariant under transposition). Thus, we have shown
$$\sum_{i=1}^d[M:N]^{-2}\Psi_i\circ\Psi\circ T_i =\id_{N'}.$$
Hence, by Lemma \ref{l:Lemma7}, if $N'=\bigoplus_{j=1}^Jz_jN'$, for minimal central projections $z_1,...,z_J$, then for each $i$, there exist $\mu_i^j,...,\mu_i^J\geq 0$ such that
$$[M:N]^{-2}\Psi_i\circ\Psi\circ T_i|_{z_jN'} =\mu_i^j\id_{z_jN'}.$$
Let $\sigma_i:=\sum_j \mu_i^jz_j\in\mc{Z}(N)$. Then for every $x\in N'$,
\begin{equation}\label{e:ucp}[M:N]^{-2}\Psi_i\circ\Psi\circ T_i(x)=\sum_{j=1}^J[M:N]^{-2}\Psi_i\circ\Psi\circ T_i(z_jx)=\sum_{j=1}^J\mu_i^jz_jx=\sigma_ix.
\end{equation}
In particular, $\sigma_i=[M:N]^{-2}\Psi_i\circ\Psi\circ T_i(1)=[M:N]^{-2}\Psi_i(\Psi(1))$.
Next, we show that $\sigma_i$ is invertible. Since the scheme is faithful, the element 
$$\rho_i:=(\id\ten\tau\ten\tau)((F_i\ten 1)(1\ten \om))\in N'$$
satisfies
$$\tau(\rho\rho_i)=(\tau\ten\tau\ten\tau)((F_i\ten 1)(\rho\ten 1\ten 1)(1\ten \om))>0$$
for all $\tau$-densities $\rho\in N'$. Thus, $\rho_i$ is invertible in $N'$. In fact, $\rho_i=\sigma_i$: expanding the operators $F_i$ and $\om$ once again using Lemma \ref{l:Choi} and performing similar manipulations using the entanglement and Markov property of $e_N$, we see that
\begin{align*} \rho_i&=\sum_{\alpha,\beta}(\id\ten\tau\ten\tau)(((a_{i,\beta}^*\ten 1)e_N(a_{i,\beta}\ten 1)\ten 1)(1\ten (w_\alpha^*\ten 1)e_N(w_\alpha\ten 1)))\\
&=(\id\ten\tau\ten\tau)(((\Psi_i\ten \id)(e_N)\ten 1)(1\ten (\Phi\ten\id)(e_N)))\\
&=[M:N]^{-1}(\id\ten\tau)((\Psi_i\ten \id)(e_N)(1\ten\Phi(1))) \ \ \ \ \textnormal{(trace out $3^{rd}$ leg)}\\
&=[M:N]^{-1}(\id\ten\tau)((\Psi_i\ten \id)(e_N(1\ten\Phi(1))))\\
&=[M:N]^{-1}(\id\ten\tau)((\Psi_i\ten \id)(e_N(\Phi(1)^t\ten 1))) \ \ \ \ \textnormal{($\Phi(1)\in N'$)}\\
&=[M:N]^{-2}\Psi_i(\Phi(1)^t)\\
&=[M:N]^{-2}\Psi_i(\Psi(1)).
\end{align*}
Hence, 
$$\sigma_i=[M:N]^{-2}\Psi_i\circ\Psi\circ T_i(1)=[M:N]^{-2}\Psi_i(\Psi(1))=\rho_i$$
is a positive invertible element of $\mc{Z}(N)$. Hence, $[M:N]^{-2}\sigma_i^{-1/2}\Psi_i(\Psi(\cdot))\sigma_i^{-1/2}$ is a UCP map $N'\rightarrow N'$ which is a left (hence two-sided) inverse to $T_i$ by equation (\ref{e:ucp}) (and finite-dimensionality of $N'$). Then $T_i$ is a unital complete order isomorphism of the unital $C^*$-algebra $N'$, so is necessarily a $*$-automorphism (see, e.g., \cite[Corollary 5.2.3]{er}). $T_i$ is therefore the restriction of a $*$-automorphism of $M$ to $N'$ (by the proof of \cite[Proposition 2.3.3]{dhj}, for instance), that is, $T_i(x)=u_ixu_i^*$ for some unitary $u_i\in \mc{N}_M(N')=\mc{N}_M(N)$. As shown above,
\begin{equation}\label{e:sigma}(E_{N'}\ten\tau\ten\tau)((F_i\ten 1)(1\ten1\ten T_i(x))(1\ten \om))=[M:N]^{-2}\Psi_i(\Psi(T_i(x)))=\sigma_ix, \ \ \ x\in N',\end{equation}
so we have
$$\sigma_i^{1/2}u_i^*xu_i\sigma_i^{1/2}=(E_{N'}\ten\tau\ten\tau)((F_i\ten 1)(1\ten1\ten x)(1\ten \om)), \ \ \ x\in N'.$$
Summing over $i$, and using the fact that $\{F_i\}_{i=1}^d$ is a POVM,
\begin{equation}\label{e:z}\sum_{i=1}^d\sigma_i^{1/2}u_i^*xu_i\sigma_i^{1/2}=(E_{N'}\ten\tau\ten\tau)((1\ten1\ten x)(1\ten \om))=\tau(x(\tau\ten\id)(\om))1, \ \ \ x\in N'.\end{equation}
Put $z:=(\tau\ten\id)(\om)\in N'$. The above relation together with the fact that $e_N\in N'\cap M_1=N'\ten N'$ implies
$$\sum_{i=1}^d(1\ten \sigma_i^{1/2}u_i^*)e_N(1\ten u_i\sigma_i^{1/2})=(\id\ten\tau)(e_N(1\ten z))\ten 1=(\id\ten\tau)(e_N(z^t\ten 1))\ten 1=[M:N]^{-1}z^t\ten 1.$$
Tracing out the right hand side, and using the fact that $u_i\sigma_i u_i^*\in N'$, 
$$[M:N]^{-1}z^t=\sum_{i=1}^d(\id\ten\tau)(e_N(1\ten u_i\sigma_i u_i^*))=\sum_{i=1}^d(\id\ten\tau)(e_N(\overline{u_i}\sigma_i^t u_i^t\ten 1)=[M:N]^{-1}\sum_{i=1}^d\overline{u_i}\sigma_i^t u_i^t,$$
implying $z=\sum_{i=1}^du_i\sigma_iu_i^*$ is a positive invertible element of $\mc{Z}(N)$. But then by (\ref{e:z})
$$\sum_{i=1}^d\sigma_i^{1/2}u_i^*z^{-1/2}xz^{-1/2}u_i\sigma_i^{1/2}=\tau(x)1, \ \ \ x\in N',$$
from which it follows that
$$\sum_{i=1}^d(\sigma_i^{1/2}u_i^*z^{-1/2}\ten 1)e_N(z^{-1/2}u_i\sigma_i^{1/2}\ten 1)=1\ten (\tau\ten\id)(e_N)=[M:N]^{-1}1\ten 1,$$
i.e., $\{\sqrt{[M:N]}z^{-1/2}u_i\sigma_i^{1/2}\}_{i=1}^d$ forms a basis for $M$ over $N$. However, $\tau|_{N'}$ is the Markov trace for $\C\subseteq N'$, so by Lemma \ref{l:Markovtr} we have
$$\frac{n_j}{m_j}=\frac{n}{\mathrm{dim}N'}, \ \ \ j=1,...,J,$$
where $N=\bigoplus_{j=1}^JM_{n_j}(\C)\ten 1_{m_j}$ is the decomposition induced from the inclusion $N\subseteq M_n(\C)$. Hence,
$$\dim N=\sum_{j=1}^Jn_j^2=\sum_{j=1}^J\frac{m_j^2n^2}{(\dim N')^2}=\frac{\dim N'\cdot n^2}{(\dim N')^2}=\frac{\dim M}{\dim N'}.$$
By tightness, we therefore have $d=\dim N'=\frac{\dim M}{\dim N}$, which entails the orthonormality of the basis $\{\sqrt{[M:N]}z^{-1/2}u_i\sigma_i^{1/2}\}_{i=1}^d$ by Lemma \ref{l:ONB}. As $u_i\sigma_iu_i^*\in\mc{Z}(N)$, we get
$$1=[M:N]E_N(z^{-1/2}u_i\sigma_iu_i^*z^{-1/2})=[M:N]z^{-1/2}u_i\sigma_iu_i^*z^{-1/2},$$
that is, $u_i\sigma_iu_i^*=[M:N]^{-1}z$. But then the basis elements are
$$\sqrt{[M:N]}z^{-1/2}u_i\sigma_i^{1/2}=\sqrt{[M:N]}z^{-1/2}(u_i\sigma_i^{1/2}u_i^*)u_i=u_i,$$
so $\{u_i\}_{i=1}^d$ is an orthonormal basis of $M$ over $N$ inside $\mc{N}_M(N)$. 

Next, consider the decomposition of $\om$ induced by $\{u_i\}_{i=1}^d$ as in Lemma \ref{l:Choi}, that is 
$$\om=\sum_i (w_i^*\ten 1) e_N(w_i\ten 1)$$
with each $w_i^*w_i, w_iw_i^*\in N'$, where $w_i^*=[M:N]E_M(\sqrt{\om}u_i^*e_N)$. Also by Lemma \ref{l:Choi}, the associated CP map $\Phi:N'\rightarrow N'$ (defined above) satisfies $\Phi=\sum_i w_i^*(\cdot)w_i$, so that $\Psi=t\circ\Phi\circ t=\sum_{i}w_i^t(\cdot)\overline{w_i}$. 

Let $\widetilde{\Psi}=[M:N]^{-1}\Psi\circ\Ad(z^{-1/2})$. From equation (\ref{e:sigma}) it follows that
\begin{equation}\label{e:Psi}\Psi_i(\widetilde{\Psi}(x))=[M:N]^{-1}\Psi_i(\Psi(z^{-1/2}xz^{-1/2}))=[M:N](\sigma_i^{1/2}u_i^*z^{-1/2})x(z^{-1/2}u_i\sigma_i^{1/2})=u_i^*xu_i\end{equation}
for every $x\in N'$. Moreover, $\widetilde{\Psi}$ is $\tau$-preserving on $N'$: 
\begin{align*}\tau(\widetilde{\Psi}(x))&=[M:N]^{-1}\tau(\Psi(z^{-1/2}xz^{-1/2}))\\
&=[M:N]^{-1}\sum_{i=1}^d\tau(w_i^tz^{-1/2}xz^{-1/2}\overline{w_i})\\
&=[M:N]^{-1}\sum_{i=1}^d\tau(z^{-1/2}xz^{-1/2}\overline{w_i}w_i^t)\\
&=\sum_{i=1}^d\tau(z^{-1/2}xz^{-1/2}(\tau\ten\id)(e_N(1\ten \overline{w_i}w_i^t)))\\
&=\sum_{i=1}^d\tau(z^{-1/2}xz^{-1/2}(\tau\ten\id)(e_N(w_iw_i^*\ten 1))) \ \ \ \ \textnormal{($w_iw_i^*\in N '$)}\\
&=\sum_{i=1}^d\tau(z^{-1/2}xz^{-1/2}(\tau\ten\id)((w_i^*\ten 1)e_N(w_i\ten 1)))\\
&=\tau(z^{-1/2}xz^{-1/2}(\tau\ten\id)(\om))\\
&=\tau(z^{-1/2}xz^{-1/2}z)\\
&=\tau(x).
\end{align*}
Thus, $T_i\circ\Psi_i = (\widetilde{\Psi})^{-1} $ (by equation (\ref{e:Psi})) is $\tau$-preserving, implying that $\Psi_i$ is $\tau$-preserving as $T_i$ is. Hence, the adjoint maps $\widetilde{\Psi}^*,\Psi_i^*,T_i^*\in\mc{CP}(N')$, defined relative to $\tau$, are all UCP, and satisfy $\widetilde{\Psi}^*\circ\Psi_i^*\circ T_i^*=\id_{N'}$. It follows that $\widetilde{\Psi}^*$ and $\Psi_i^*$ are $*$-automorphisms of $N'$, so there exist $u,v_1,...,v_d\in \mc{N}_M(N')=\mc{N}_M(N)$ such that 
$$\widetilde{\Psi}^*(x)=uxu^*, \ \ \ \textnormal{and} \ \ \ \Psi_i^*(x)=v_ixv_i^*, \ \ \ x\in N'.$$
Hence, $\Psi(x)=[M:N]\widetilde{\Psi}(z^{1/2}xz^{1/2})=[M:N]u^*z^{1/2}xz^{1/2}u$, so that
$$\Phi(x)=t\circ\Psi\circ t(x)=[M:N]u^tz^{1/2}xz^{1/2}\overline{u},$$
and therefore
$$\om=(\Phi\ten\id)(e_N)=[M:N](u^tz^{1/2}\ten 1)e_N(z^{1/2}\overline{u}\ten 1)=[M:N](1\ten z^{1/2}u)e_N(1\ten u^*z^{1/2}),$$
the last equality following from Lemma \ref{l:normalise} (and the entanglement of $e_N$).

Finally, as $uv_iu_i^*(\cdot)u_iv_i^*u^*=\id_{N'}$, it follows that 
$$F_i=(\Psi_i\ten\id)(e_N)=(v_i^*\ten1)e_N(v_i\ten 1)=(u_i^*u\ten1)e_N(u^*u_i\ten1),$$
and the proof is complete.
\end{proof}

\begin{remark} The hypothesis that $\tau|_{N'}$ is the Markov trace for $\bC\subseteq N'$ is valid whenever both $N$ and $N'$ are homogeneous subalgebras of $M_n(\C)$, that is $N=\bigoplus_{j=1}^J M_k(\C)\ten 1_l$ (where $k,l$ are constant in $j$). Indeed, in this case we have $n=Jkl$, $N'=\bigoplus_{j=1}^J1_{k}\ten M_{l}(\C)$, and $\mathrm{dim}N'=Jl^2$, so that $\frac{k}{n}=\frac{l}{\mathrm{dim}N'}$. Since the trace vector associated to $\tau|_{N'}$ is $\frac{1}{n}(k,...,k)$ and the Markov trace on $N'$ has trace vector $\frac{1}{\mathrm{dim}N'}(l,...,l)$, the claim follows.

Hence, Theorem \ref{t:Werner} applies in particular whenever $N'=\bigoplus_{j=1}^JM_{l}(\C)$ ($k\equiv 1$). As noted above, homogeneous subalgebras model a distinguished special case of  hybrid classical/quantum codes that lend themselves to explicit code constructions and analyses \cite{cao2021higher,grassl2017codes,nemec2018hybrid,nemec2021infinite}.  
\end{remark}

\begin{remark} Verdon recently generalised Werner's characterisation of tight teleportation schemes to the setting of entanglement-invertible channels using graphical techniques \cite{verdon}. One can phrase Theorem \ref{t:Werner} in Verdon's context, but it is unclear whether the explicit structure of our resulting scheme (i.e., unitary Pimsner-Popa basis in the normaliser) would follow from their characterisation. In any event, our independent work uses different techniques. 
\end{remark}

\section{Applications to Quantum Chromatic Numbers}

Quantum graphs can be studied from a variety of perspectives, including non-commutative confusability graphs of quantum channels \cite{dsw}, quantum relations \cite{weav1,weav2}, and $C^*$-algebras with a quantum adjacency matrix \cite{mrv,betal}. See \cite{daws} for a recent survey and relations between the approaches. In this work, we follow Weaver's approach \cite{weav1,weav2} via quantum relations, so that a \textit{quantum graph} is a triple $(\mc{S},M,\BH)$, consisting of a von Neumann algebra $M\subseteq\BH$ and a weak* closed operator system $\mc{S}\subseteq\BH$ which is an $M'$-bimodule. 

A simple way to construct quantum graphs over a von Neumann algebra $M$ is through inclusions: any von Neumann subalgebra $N\subseteq M$ gives rise to a pair of quantum graphs $(M,N',\BH)$ and $(N',M,\BH)$, whose associated bimodules are given by the inclusions $N\subseteq M$ and $M'\subseteq N'$, respectively. In this section we combine some of our techniques with those of \cite{bgh} to compute chromatic numbers for examples of such quantum graphs. For simplicity, we restrict attention to finite-dimensional examples and leave the infinite-dimensional generalizations to future work.

As with quantum graphs themselves, generalizations of graph theoretic parameters including chromatic numbers can be studied from a variety of perspectives. Motivated by \cite[Definition 5.10]{bhtt} and \cite[Theorem 4.7]{bgh}, we will use the following definition. See \cite[Theorem 4.7]{bgh} for the mentioned equivalence.

\begin{definition}\label{d:colour} A quantum graph $(\mc{S},M,\mc{B}(H))$ on a finite-dimensional Hilbert space $H$ \textit{is $(L,c)$ colourable}, where $L$ is a tracial von Neumann algebra and $c\in \N$, if the following two equivalent conditions hold:
\begin{enumerate}
\item There is a UCP map $\Phi:\ell^\infty_c\rightarrow M\ten L$
of the form $\Phi(\cdot)=\sum_{i=1}^m A_i^*(\cdot)A_i$ satisfying
\begin{enumerate}
\item $A_i(M'\ten 1_L)A_j^*\subseteq \ell^\infty_c$ for all $i,j$, and
\item $A_i(\mc{S}\cap(M')^\perp\ten 1_L)A_j^*\subseteq(\ell^\infty_c)^\perp,$ for all $i,j$.
\end{enumerate}
\item There is a PVM $\{P_a\}_{a=1}^c$ in $M\ten L$ satisfying 
\begin{equation}\label{e:col}P_a((\mc{S}\cap (M')^\perp)\ten 1_L)P_a=0, \ \ \ a=1,...,c.
\end{equation}
\end{enumerate}
Orthogonal complements of $M'$ and $\ell^\infty_c$ are taken in $\mc{B}(H)$ and $M_c(\C)$, respectively.

We will then refer to either $(L,c,\Phi)$ or $(L,c,\{P_a\}_{a=1}^c)$ as a \emph{colouring} of $(\mc{S},M,\mc{B}(H))$.
\end{definition}

\begin{remark} It is not clear to the authors that Definition \ref{d:colour} is independent of the embedding $M\subseteq\BH$.\end{remark}

By \cite[Theorem 4.7]{bgh}, Definition \ref{d:colour}(1) means precisely that $(L,c,\Phi)$ forms a perfect quantum commuting strategy for the quantum-to-classical graph homomorphism game between $(\mc{S},M,\mc{B}(H))$ and $(M_c(\C),\ell^\infty_c,M_c(\C))$ (the complete graph on $c$ vertices). Condition (a) is then viewed as a type of ``synchronicity'' condition arising from this non-local game. See \cite[\S4,\S5]{bgh} and \cite[\S5]{bhtt} for details and related notions. One can also interpret condition (a) in terms of quantum relations: it means that 
$\Phi$ preserves the ``diagonal'' quantum relations within the reflexive quantum relations defined by the quantum graphs (see \cite{weav2}). Condition (b) means that $(L,c,\Phi)$ is an ``entanglement assisted'' quantum graph homomorphism from $(\mc{S},M,\mc{B}(H))$ to $(M_c(\C),\ell^\infty_c,M_c(\C))$. Indeed, when $L=\C$, we recover the notion of quantum graph homomorphism through pushforwards of traceless operator systems as introduced by Stahlke \cite{s}. Note that the use of orthogonal complements in Definition \ref{d:colour} matches Stahlke's definition for traceless operator systems. For simplicity, we do not consider more general algebraic colourings as in \cite{bgh} (which loosens restrictions on the $*$-algebra $L$) but some of our arguments carry through verbatim to ``hereditary'' colourings (see \cite[\S5]{bgh} for details on algebraic colourings). 

Definition \ref{d:colour}(2) is a useful reformulation that will be frequently used in the sequel.

\begin{definition} Let $(\mc{S},M,\mc{B}(H))$ be a quantum graph on a finite-dimensional Hilbert space $H$. 
\begin{itemize}
\item Its \textit{quantum commuting chromatic number} is
$$\chi_{qc}(\mc{S},M,\mc{B}(H)):=\min\{c\in\N\mid \text{$(\mc{S},M,\mc{B}(H))$ is $(L,c)$ colourable for some $L$}\}.$$
\item Its \textit{quantum chromatic number} is
$$\chi_{q}(\mc{S},M,\mc{B}(H)):=\min\{c\in\N\mid \text{$(\mc{S},M,\mc{B}(H))$ is $(L,c)$ colourable with $L$ finite-dimensional}\}.$$
\item Its \textit{local chromatic number} is
$$\chi_{loc}(\mc{S},M,\mc{B}(H)):=\min\{c\in\N\mid \text{$(\mc{S},M,\mc{B}(H))$ is $(L,c)$ colourable with $L=\C$}\}.$$
\end{itemize}
We sometimes refer to the corresponding sets of colourings as \emph{$qc$-colourings}, \emph{$q$-colourings} and \emph{$loc$-colourings}, respectively.
\end{definition}

For complete quantum graphs $(M_n(\C),M,M_n(\C))$, it was shown in \cite[Theorem 5.6, Theorem 5.9]{bgh} that
$$\chi_q(M_n(\C),M,M_n(\C))=\chi_{qc}(M_n(\C),M,M_n(\C))=\mathrm{dim}M.$$
Their result, which utilizes teleportation type techniques for one direction, generalises in a straightforward fashion to quantum graphs from finite-dimensional inclusions $N\subseteq M$ with $N$ a factor, see Theorem \ref{t:chrom} below. We include details for convenience of the reader.

\begin{proposition}\label{p:chrom} Let $N\subseteq M$ be an inclusion of von Neumann algebras on a finite-dimensional Hilbert space $H$, with $N$ a factor. Then
\begin{equation}\label{e:chromatic}\chi_q(N',M, \mc{B}(H))\leq [M:N].\end{equation}
\end{proposition}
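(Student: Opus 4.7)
The plan is to reduce the claim to the complete-graph case handled by Brannan--Ganesan--Harris, exploiting the factor hypothesis on $N$. Since $N$ is a finite-dimensional factor acting on the finite-dimensional Hilbert space $H$, there is a unitary identification $H\cong K_1\ten K_2$ under which $N=\cl B(K_1)\ten 1_{K_2}$ and $N'=1_{K_1}\ten \cl B(K_2)$. The inclusion $N\subseteq M\subseteq\BH$ then forces the commutant $M'\subseteq N'$ to take the form $M'=1_{K_1}\ten M_2'$ for a unital subalgebra $M_2\subseteq\cl B(K_2)$, and taking commutants in $\BH$ yields $M=\cl B(K_1)\ten M_2$. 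A short calculation identifies the index: choosing a Markov-orthonormal basis $\{v_a\}$ of $M_2$ (which exists since $\C\subseteq M_2$ is a connected finite-dimensional inclusion), the set $\{1_{K_1}\ten v_a\}$ is an orthonormal Pimsner--Popa basis for $M$ over $N$, and since $\dim M/\dim N = \dim M_2$ Lemma~\ref{l:ONB} gives $[M:N]=\dim M_2$.

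Next I would observe that the Hilbert--Schmidt inner product on $\BH=\cl B(K_1)\ten \cl B(K_2)$ factors as a tensor product of the inner products on $\cl B(K_1)$ and $\cl B(K_2)$, whence
\[ N'\cap (M')^\perp \;=\; 1_{K_1}\ten \cl R, \qquad \cl R:=\cl B(K_2)\cap (M_2')^\perp \subseteq \cl B(K_2), \]
with $(M_2')^\perp$ taken inside $\cl B(K_2)$. In effect, the $q$-colouring problem for $(N',M,\BH)$ has been reduced to the analogous problem for the complete quantum graph $(\cl B(K_2),M_2,\cl B(K_2))$ on $K_2$.

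To finish, I would invoke \cite[Theorem 5.6]{bgh}, which gives $\chi_q(\cl B(K_2),M_2,\cl B(K_2))\leq\dim M_2$ and hence produces a finite-dimensional tracial von Neumann algebra $L$ together with a PVM $\{Q_a\}_{a=1}^{\dim M_2}\subseteq M_2\ten L$ satisfying $Q_a(y\ten 1_L)Q_a=0$ for every $y\in\cl R$. Setting $P_a:=1_{K_1}\ten Q_a\in\cl B(K_1)\ten M_2\ten L=M\ten L$ then yields a PVM in $M\ten L$ of size $\dim M_2 = [M:N]$, and for each $X=1_{K_1}\ten y$ with $y\in\cl R$ one computes
\[ P_a\bigl((1_{K_1}\ten y)\ten 1_L\bigr)P_a \;=\; 1_{K_1}\ten Q_a(y\ten 1_L)Q_a \;=\; 0, \]
so that $\{P_a\}$ is a $q$-colouring of $(N',M,\BH)$ with $[M:N]$ colours. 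The only delicate point in this argument is the structural identification $[M:N]=\dim M_2$; once the tensor decomposition $M=\cl B(K_1)\ten M_2$ is in hand, the rest is a routine lift of the BGH colouring along the inert factor $\cl B(K_1)$.
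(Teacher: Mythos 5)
Your argument is correct, and it rests on the same structural fact as the paper's proof --- namely that a factor $N\cong M_d(\C)$ acting on a finite-dimensional $H$ forces a tensor decomposition $H\cong K_1\ten K_2$, $N=\cl B(K_1)\ten 1$, $M=\cl B(K_1)\ten M_2$, with $[M:N]=\dim M_2=\sum_j l_j^2$. Where you diverge is in how the colouring is produced: the paper re-derives the Brannan--Ganesan--Harris construction explicitly (Weyl orthonormal bases of each $M_{l_j}(\C)$, embeddings $\pi_j:M_{l_j}(\C)\hookrightarrow M_l(\C)$ with $l=\mathrm{lcm}(l_1,\dots,l_m)$, and a direct verification that the resulting $P_{i,j}$ kill $N'\cap(M')^\perp$), whereas you isolate the clean identity $N'\cap(M')^\perp=1_{K_1}\ten\bigl(\cl B(K_2)\cap(M_2')^\perp\bigr)$ and then simply tensor a BGH colouring of the complete quantum graph $(\cl B(K_2),M_2,\cl B(K_2))$ with $1_{K_1}$. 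Your route is shorter and makes the logical dependence on \cite[Theorem 5.6]{bgh} transparent; the paper's route is self-contained and exhibits the concrete $L=M_l(\C)$ and PVM, which is what the authors say they include ``for convenience of the reader.'' One small imprecision: Lemma~\ref{l:ONB} only asserts that a Pimsner--Popa basis is orthonormal if and only if its cardinality equals $\dim M/\dim N$; it does not by itself identify that cardinality with $[M:N]$. To conclude $[M:N]=\dim M_2$ you should either note that $\sum_a(1\ten v_a)^*(1\ten v_a)=[M:N]1$ and take the trace (as the paper does at the start of the proof of Theorem~\ref{t:finite}), or just compute $[M:N]=\norm{\Lambda_N^M}^2=\sum_j l_j^2$ directly from the inclusion matrix. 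This is easily repaired and does not affect the validity of the argument.
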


\begin{proof} Let $M=\bigoplus_{j=1}^m 1_{n_j}\ten M_{k_j}(\C)$ be the induced decomposition from the representation $M\subseteq\mc{B}(H)$. Since $N$ is a factor, we have $N\cong M_d(\C)$ for some $d$, and without loss of generality, for each $j$, there exists $l_j\in\N$ for which $k_j=l_jd$, so that $[M:N]=\sum_{j=1}^d l_j^2$, and
$$M=\bigoplus_{j=1}^m 1_{n_j}\ten M_{l_j}(\C)\ten M_d(\C)=\bigg(\bigoplus_{j=1}^m 1_{n_j}\ten M_{l_j}(\C)\bigg)\ten M_d(\C).$$
The embedding $N\subseteq M$ is then simply $x\mapsto 1_n\ten x$, where $n=\sum_{j=1}^m n_j l_j$. 
For each compressed inclusion $1_{l_j}\ten M_d(\C)\subseteq M_{l_j}(\C)\ten M_d(\C)$, $x\mapsto 1_{l_j}\ten x$ (still unital), pick an orthonormal Pimsner-Popa basis $\{u_i\}_{i=1}^{l_j^2}$ of unitaries lying in $M_{l_j}(\C)\ten 1$, and let $e_j\in M_{l_j}(\C)\ten M_{l_j}(\C)$ denote the Jones projection for the inclusion $\C\subseteq M_{l_j}(\C)$ (i.e., the maximally entangled state).

Set $l=\mathrm{lcm}(l_1,....,l_m)$, and for each $j$, pick a unital $*$-homomorphism $\pi_j:M_{l_j}(\C)\hookrightarrow M_l(\C)$. Letting $\Sigma$ denote the tensor flip, define projections $P_{i,j}\in M\ten M_l(\C)$ by 
$$P_{i,j}:= 1_{n_j}\ten(\id_{l_j}\ten \id_d\ten\pi_j)(\Sigma_{23}((u_i^*\ten1_{l_j})e_j(u_i\ten 1_{l_j})\ten 1_d)), \ \ \ j=1,...,m, \ i=1,...,l_j^2.$$
Then $\{P_{i,j}\}$ is a PVM:
\begin{align*}\sum_{j=1}^m\sum_{i=1}^{l_j^2}P_{i,j}&=\sum_{j=1}^m\sum_{i=1}^{l_j^2}1_{n_j}\ten(\id_{l_j}\ten \id_d\ten\pi_j)(\Sigma_{23}((u_i^*\ten1_{l_j})e_j(u_i\ten 1_{l_j})\ten 1_d))\\
&=\sum_{j=1}^m1_{n_j}\ten(\id_{l_j}\ten \id_d\ten\pi_j)(\Sigma_{23}(1_{l_j}\ten 1_{l_j}\ten 1_d))\\
&=\sum_{j=1}^m1_{n_j}\ten(\id_{l_j}\ten \id_d\ten\pi_j)(1_{l_j}\ten 1_d\ten 1_{l_j})\\
&=\sum_{j=1}^m1_{n_j}\ten1_{l_j}\ten 1_d \ten 1_l\\
&=1_M\ten 1_l.
\end{align*}

Now, the relative complement 
\begin{align*}N'\cap (M')^\perp&=\{X=[X_{j,j'}]\ten 1_d\in\mc{B}(\oplus_{j=1}^m \C^{n_j}\ten \C^{l_j})\ten 1_d\mid X \perp(\bigoplus_{j=1}^m M_{n_j}(\C)\ten 1_{l_j}\ten 1_d)\}\\
&=\{X=[X_{j,j'}]\ten 1_d\in\mc{B}(\oplus_{j=1}^m \C^{n_j}\ten \C^{l_j})\ten 1_d\mid (\id\ten\tau_{l_j})(X_{j,j})=0 \ \forall \ j=1,...,m\},
\end{align*}
where $\tau_{l_j}$ is the normalised trace on $M_{l_j}(\C)$. Since 
$$(1_{n_j}\ten e_j)(Y\ten 1_{l_j})(1_{n_j}\ten e_j)=(\id\ten\tau_{l_j})(Y)\ten e_j$$
for any $Y\in M_{n_j}(\C)\ten M_{l_j}(\C)$ (maximally entangled state is a trace vector), for any $X=[X_{j,j'}]\ten 1_d\in N'\cap (M')^\perp$, and every $j$ we have

\begin{align*}&(1_{n_j}\ten (u_i^*\ten 1_{l_j})e_j(u_i\ten 1_{l_j}))([X_{k,k'}]\ten 1_{l_j})(1_{n_j}\ten (u_i^*\ten 1_{l_j})e_j(u_i\ten 1_{l_j}))\\
&=(1_{n_j}\ten (u_i^*\ten 1_{l_j})e_j(1_{l_j}\ten u_i^t))(X_{j,j}\ten 1_{l_j})(1_{n_j}\ten (1_{l_j}\ten \overline{u_i})e_j(u_i\ten 1_{l_j}))\\
&=(1_{n_j}\ten (u_i^*\ten 1_{l_j})e_j)(X_{j,j}\ten 1_{l_j})(1_{n_j}\ten (1_{l_j}\ten e_j(u_i\ten 1_{l_j}))\\
&=(1_{n_j}\ten u_i^*\ten 1_{l_j})((\id_{n_j}\ten\tau_{l_j})(X_{j,j})\ten e_j)(1_{n_j}\ten (1_{l_j}\ten u_i\ten 1_{l_j}))\\
&=0.
\end{align*}
Simple manipulations with the flip map $\Sigma_{23}$ show that
$$P_{i,j}(X\ten 1_l)P_{i,j}=0, \ \ \ X\in N'\cap(M')^\perp.$$
Hence, $(M_l(\C),[M:N],\{P_{i,j}\})$ is a finite-dimensional colouring of $(N',M,\BH)$, so its quantum chromatic number is at most $[M:N]$.
\end{proof}

Continuing with the proof strategy of \cite[Theorem 5.9]{bgh}, we now show that equality holds in (\ref{e:chromatic}). The same argument works more generally for hereditary colourings (see \cite[\S 5]{bgh}), but for simplicity of presentation we restrict to quantum and quantum commuting colourings. We require a generalised version of \cite[Lemma 5.8]{bgh}.

\begin{lemma}\label{l:chrom} Let $N\subseteq M$ be factors on a finite-dimensional Hilbert space $H$.
  If $(L,c,\{P_a\}_{a=1}^c)$ is a colouring of $(N',M,\mc{B}(H))$,
  then for each $a$, $R_a=[M:N](E_N\ten\id_L)P_a$ is a projection such that $\sum_{a=1}^c R_a=[M:N]1_N\ten 1_L$. 
\end{lemma}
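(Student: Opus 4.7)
My plan is to extract, from the colouring condition, the multiplicative identity
\[
R_a P_a = P_a
\]
inside $M\ten L$, where $R_a\in N\ten L$ is viewed as an element of $M\ten L$ via the inclusion $N\ten L\subseteq M\ten L$. Once this is in hand, applying $E_N\ten\id_L$ to both sides and invoking the $N$-bimodularity of $E_N$ will immediately give $R_a^2=R_a$; self-adjointness and the sum identity are then routine.

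I would begin by exploiting the factor hypotheses. Since $N\subseteq M$ are finite-dimensional factors, the relative commutant $A:=N'\cap M$ is also a factor, $[M:N]=\dim A=:l^2$, and abstractly $M\cong A\ten N$ with $E_N=\tau_A\ten\id_N$, where $\tau_A$ is the normalised trace on $A$. The representation of $\mc{B}(H)$ correspondingly factors as $H\cong \C^l\ten K\ten\C^d$ with $A=M_l\ten 1_K\ten 1_d$, $N=1_l\ten 1_K\ten M_d$, $M'=1_l\ten\mc{B}(K)\ten 1_d$, and a short computation then gives
\[
N'\cap (M')^\perp \;=\; (A\cap\ker\tau_A)\ten\mc{B}(K)\ten 1_d.
\]
Since $P_a\in M\ten L$ is trivial in the $\mc{B}(K)$-slot, I would apply the colouring condition to simple tensors $a_0\ten k\ten 1_d\ten 1_L$ with $a_0$ traceless in $A$ and $k\in\mc{B}(K)$ nonzero; factoring out the (nonzero) $k$, and then re-introducing the trace via $P_a^2=P_a$, should produce the synchronicity relation
\[
P_a(x\ten 1_N\ten 1_L)\,P_a \;=\; \tau_A(x)\,P_a, \qquad x\in A,
\]
viewed in $M\ten L$ with $A$ embedded as $A\ten 1_N$.

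To pass from synchronicity to the desired identity, I would fix an orthonormal basis of unitaries $\{u_\alpha\}_{\alpha=1}^{l^2}$ of $A$ (e.g., generalised Pauli operators), apply the synchronicity relation with $x=u_\alpha$, multiply on the left by $u_\alpha^*\ten 1_N\ten 1_L$, and sum over $\alpha$. On the right, the orthonormal expansion $\sum_\alpha \tau_A(u_\alpha)u_\alpha^*=1_A$ collapses the sum to $P_a$; on the left, the twirl identity $\sum_\alpha u_\alpha^* y u_\alpha = l^2\tau_A(y)\,1_A$ (for $y\in A$), applied componentwise in the $A$-factor of $P_a$, produces $l^2(1_A\ten Q_a)\,P_a=(1_A\ten R_a)\,P_a$, where $Q_a:=(E_N\ten\id_L)(P_a)$. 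Hence $R_a P_a=P_a$ in $M\ten L$.

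Applying $E_N\ten\id_L$ to this identity, and using that $E_N$ is an $N$-bimodule map (so $E_N\ten\id_L$ is $N\ten L$-bimodular), I obtain $R_a\cdot Q_a=Q_a$, i.e.\ $R_a^2=R_a$. Self-adjointness is inherited from $P_a=P_a^*$ and the $*$-preservation of $E_N$, and the sum identity $\sum_a R_a=[M:N]\,1_N\ten 1_L$ drops out on applying $[M:N](E_N\ten\id_L)$ to $\sum_a P_a=1_M\ten 1_L$ and using $E_N(1_M)=1_N$. I expect the main obstacle to be extracting the synchronicity relation in a clean form: one has to carefully track the multiplicity slot $\mc{B}(K)$, exploit the triviality of $P_a$ there to strip the $k$-factor, and reduce from a condition on the full space $N'\cap(M')^\perp$ to one on the traceless part of $A$. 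The subsequent twirl and the final application of $E_N\ten\id_L$ are routine once the factor structure has been put in place.
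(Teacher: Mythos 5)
Your argument is correct, and although it rests on the same structural reduction as the paper's proof (decomposing $H$ so that $M\cong A\ten N$ with $A=N'\cap M\cong M_l(\C)$, $[M:N]=l^2$, and identifying $N'\cap(M')^\perp$ with the traceless part of $A$ tensored with the multiplicity slot), the execution is genuinely different. The paper expands $P_a$ in matrix units, extracts three coordinate identities from the colouring condition (applied to off-diagonal units $e_{i_0,j_0}$ and to differences $e_{i_0,i_0}-e_{j_0,j_0}$) together with $P_a^2=P_a$, and then verifies $R_a^2=R_a$ by a direct index computation. You package the same input as the single coordinate-free synchronicity relation $P_a(x\ten 1)P_a=\tau_A(x)P_a$ for $x\in A$ --- your derivation, stripping the nonzero $\mc{B}(K)$-factor using that $P_a$ is trivial in the multiplicity slot and then splitting $x$ into scalar and traceless parts, is sound --- and convert it into the stronger intermediate identity $R_aP_a=P_a$ by twirling over a unitary error basis of $A$, using $\sum_\alpha u_\alpha^*yu_\alpha=l^2\tau_A(y)1$ and $\sum_\alpha\tau_A(u_\alpha)u_\alpha^*=1$; idempotency then follows from one application of the $(N\ten L)$-bimodule map $E_N\ten\id_L$. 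Your route buys a cleaner conceptual statement ($R_a$ acts as the identity on the range of $P_a$) and avoids all index bookkeeping; the paper's coordinate identities, on the other hand, are reused almost verbatim in the proof of Theorem~\ref{t:chrom} to establish the orthogonality $R_a^i\perp R_a^j$ across central summands, so the matrix-unit formalism is not wasted there. Your twirl is also essentially the conditional expectation $E_{M'}=\frac{1}{[M:N]}\sum_i u_i^*(\cdot)u_i$ that appears in the proof of Theorem~\ref{t:finite}, so the argument fits naturally with the surrounding material.
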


\begin{proof} There exist $d,m,n\in\N$ such that $H=\C^d\ten\C^m\ten\C^n$, $M=1_d\ten M_m(\C)\ten M_n(\C)$ and $N=1_d\ten1_m\ten M_n(\C)$. Then $[M:N]=m^2$. 

Since $P_a\in M\ten L=1_d\ten M_m(\C)\ten M_n(\C)\ten L$, write
$$P_a=\sum_{i,j=1}^m\sum_{k,l=1}^n 1_d\ten e_{i,j}\ten e_{k,l}\ten P^a_{i,j,k,l},$$
where $e_{i,j}\in M_m(\C)$ and $e_{k,l}\in M_n(\C)$ are matrix units and $P^a_{i,j,k,l}\in L$.
Let $i_0,j_0\in\{1,...,m\}$, $i_0\neq j_0$. Then 
$$1_d\ten e_{i_0,j_0}\ten 1_n\in N'\cap(M')^\perp = \{X\in M_d(\C)\ten M_m(\C)\ten 1_n\mid (\id_d\ten\tau_{m}\ten\tau_n)(X)=0\}.$$
Hence,
\begin{align*}0&=P_a(1_d\ten e_{i_0,j_0}\ten 1_n\ten 1_L)P_a\\
&=\sum_{i,i',j,j'=1}^m\sum_{k,k',l,l'=1}^n 1_d\ten e_{i,j}e_{i_0,j_0}e_{i',j'}\ten e_{k,l}e_{k',l'}\ten P^a_{i,j,k,l}P^a_{i',j',k',l'}\\
&=\sum_{i,j'=1}^m\sum_{k,k',l'=1}^n 1_d\ten e_{i,j'}\ten e_{k,l'}\ten P^a_{i,i_0,k,k'}P^a_{j_0,j',k',l'}\\
&=\sum_{i,j'=1}^m\sum_{k,l'=1}^n 1_d\ten e_{i,j'}\ten e_{k,l'}\ten \bigg(\sum_{k'=1}^nP^a_{i,i_0,k,k'}P^a_{j_0,j',k',l'}\bigg),
\end{align*}
so that 
\begin{equation}\label{e:star}\sum_{k'=1}^nP^a_{i,i_0,k,k'}P^a_{j_0,j',k',l'}=0, \ \ \ \forall \ i,j',k,l', \ \ i_0\neq j_0.\end{equation}
Similarly, $1_d\ten(e_{i_0,i_0}-e_{j_0,j_0})\ten1_n\in N'\cap(M')^\perp$ and it follows that
\begin{equation}\label{e:star2}\sum_{l=1}^nP^a_{i,i_0,k,l}P^a_{i_0,j',l,l'}=\sum_{l=1}^nP^a_{i,j_0,k,l}P^a_{j_0,j',l,l'}, \ \ \ \forall \ i,j',k,l',i_0,j_0.\end{equation}
Finally, since $P_i$ is a projection, one easily sees that
\begin{equation}\label{e:star3}P^a_{i,j',k,l'}=\sum_{j=1}^m\sum_{l=1}^nP^a_{i,j,k,l}P^a_{j,j',l,l'} \ \ \ \forall \ i,j',k,l'.\end{equation}

The conditional expectation $E_N:M\rightarrow N$ is the (normalised) partial trace $(\id_d\ten\tau_m\ten\id_n)$, so that
$$R_a=m^2(E_N\ten\id_L)P_a=m\sum_{i=1}^m\sum_{k,l=1}^n 1_d\ten e_{k,l}\ten P^a_{i,i,k,l}.$$
Hence, 
\begin{align*}R_a^2&=m^2\sum_{k,l'=1}^n 1_d\ten e_{k,l'}\ten \bigg(\sum_{i,i'=1}^m\sum_{k'=1}^nP^a_{i,i,k,k'}P^a_{i',i',k',l'}\bigg)\\
&=m^2\sum_{k,l'=1}^n 1_d\ten e_{k,l'}\ten \bigg(\sum_{i=1}^m\sum_{k'=1}^nP^a_{i,i,k,k'}P^a_{i,i,k',l'}\bigg) \ \ \ \ \textnormal{(by (\ref{e:star}))}\\
&=m\sum_{k,l'=1}^n 1_d\ten e_{k,l'}\ten \bigg(\sum_{i,j=1}^m\sum_{k'=1}^nP^a_{i,j,k,k'}P^a_{j,i,k',l'}\bigg) \ \ \ \ \textnormal{(by (\ref{e:star2}))}\\
&=m\sum_{k,l'=1}^n 1_d\ten e_{k,l'}\ten \bigg(\sum_{i=1}^mP^a_{i,i,k,l'}\bigg) \ \ \ \ \textnormal{(by (\ref{e:star3}))}\\
&=R_a.
\end{align*}
That $R_a=R_a^*$ and $\sum_{a=1}^cR_a=m^21_N\ten 1_L$ are immediate from its definition.
\end{proof}

\begin{remark}\label{r:chrom} Note that in the above proof we took $R_a\in 1_d\ten M_n(\C)\ten L$. Since $P_a\in 1_d\ten M_m(\C)\ten M_n(\C)\ten L$, we could also trace out the (trivial) first leg of $P_a$ in the definition of $R_a$ to the same end. This will be done in the next proof.
\end{remark}

\begin{theorem}\label{t:chrom} Let $N\subseteq M$ be an inclusion of von Neumann algebras on a finite-dimensional Hilbert space $H$ with $N$ a factor. Then 
$$\chi_q(N',M, \mc{B}(H))=\chi_{qc}(N',M,\mc{B}(H))=[M:N].$$
\end{theorem}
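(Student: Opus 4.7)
The upper bound is immediate: Proposition \ref{p:chrom} furnishes $\chi_q(N',M,\mc{B}(H))\leq[M:N]$, and $\chi_{qc}\leq\chi_q$ by definition. The content of the theorem is therefore the matching lower bound $\chi_{qc}(N',M,\mc{B}(H))\geq[M:N]$, which requires extending Lemma \ref{l:chrom} to the case where $M$ need not be a factor.

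Following the proof of Proposition \ref{p:chrom}, write $M=\bigoplus_{j=1}^m 1_{n_j}\ten M_{l_j}(\C)\ten M_d(\C)$ with $N\cong M_d(\C)$ diagonally embedded, so that $[M:N]=\sum_j l_j^2$ and each compressed inclusion $z_jN\subseteq z_jMz_j$ is a factor-subfactor of index $l_j^2$. Given any colouring $(L,c,\{P_a\}_{a=1}^c)$ of $(N',M,\mc{B}(H))$, decompose $P_a=\bigoplus_j P_a^{(j)}$ with $P_a^{(j)}\in z_jMz_j\ten L$. Every element of $N'\cap(M')^\perp$ supported in a single block $z_j$ belongs to $(z_jN)'\cap(z_jMz_j)'^\perp$ in $\mc{B}(z_jH)$, so $\{P_a^{(j)}\}_a$ is a colouring of the factor-factor quantum graph $(z_jN',z_jMz_j,\mc{B}(z_jH))$. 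Lemma \ref{l:chrom} then guarantees that
$$R_a^{(j)}:=l_j^2(E_j\ten\id_L)(P_a^{(j)})\in N\ten L$$
(identifying $z_jN\cong N$) is a projection with $\sum_{a=1}^c R_a^{(j)}=l_j^2\cdot 1_{N\ten L}$.

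The new ingredient when $M$ is not a factor comes from off-block elements of $N'\cap(M')^\perp$. Since $N$ is a factor we have $N'=\mc{B}(\bigoplus_j\C^{n_jl_j})\ten 1_d$, whereas $M'=\bigoplus_j M_{n_j}(\C)\ten 1_{l_j}\ten 1_d$ is block-diagonal, so every off-block matrix unit $Z=(|p_1\ra\la p_1'|\ten|p_2\ra\la p_2'|)\ten 1_d$ linking the $z_j$- and $z_{j'}$-blocks automatically lies in $N'\cap(M')^\perp$. Block-diagonality of $P_a$ reduces the colour condition $P_a(Z\ten 1_L)P_a=0$ to $P_a^{(j)}(Z\ten 1_L)P_a^{(j')}=0$ for all such $Z$. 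Expanding $P_a^{(j)}$ and $P_a^{(j')}$ in matrix units of $1_{n_j}\ten M_{l_j}(\C)\ten M_d(\C)$, letting $Z$ run over all off-block rank-one units, and then partial-tracing out the $M_{l_j}(\C)\ten M_{l_{j'}}(\C)$ factor produces a cross-block analogue of relation (\ref{e:star}) which collapses to the orthogonality
$$R_a^{(j)}R_a^{(j')}=0\quad\text{in }N\ten L\qquad(j\neq j').$$

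Setting $R_a:=\sum_{j=1}^m R_a^{(j)}$, the cross-block orthogonality together with the within-block projection property makes $R_a$ a projection in $N\ten L$, and $\sum_{a=1}^c R_a=\sum_j l_j^2\cdot 1_{N\ten L}=[M:N]\cdot 1_{N\ten L}$. Since $N$ is a factor and $L$ is tracial, $\tau_N\ten\tau_L$ is a tracial state on $N\ten L$; applying it gives
$$[M:N]=\sum_{a=1}^c(\tau_N\ten\tau_L)(R_a)\leq c,$$
as each projection $R_a\leq 1_{N\ten L}$ has trace at most $1$, so $c\geq[M:N]$. The main technical hurdle is the cross-block matrix-unit bookkeeping that produces $R_a^{(j)}R_a^{(j')}=0$; the key insight (peculiar to $N$ being a factor) is that the \emph{entire} off-block part of $N'$ automatically lies in $(M')^\perp$, so the colour conditions supply enough relations to force the desired orthogonality.
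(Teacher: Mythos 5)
Your proposal is correct and follows essentially the same route as the paper's proof: decompose $M$ into its central blocks, apply Lemma \ref{l:chrom} to each block colouring $\{P_a(z_j\otimes 1_L)\}_a$, derive the cross-block orthogonality $R_a^{(i)}R_a^{(j)}=0$ from the off-diagonal matrix units of $N'$ (which lie in $(M')^\perp$ because $M'$ is block-diagonal), and sum to get projections $R_a$ with $\sum_a R_a=[M:N]1$. The only cosmetic difference is the last step, where you apply a trace while the paper uses positivity of $\sum_a(1-R_a)$; both give $c\geq[M:N]$.
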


\begin{proof} By Proposition \ref{p:chrom} and the fact that $\chi_q(N',M, \mc{B}(H))\geq\chi_{qc}(N',M,\mc{B}(H))$, it suffices to show that
$$\chi_{qc}(N',M,\mc{B}(H))\geq [M:N].$$
Let $M=\bigoplus_{j=1}^m 1_{n_j}\ten M_{k_j}(\C)$ be the induced decomposition from the representation $M\subseteq\mc{B}(H)$. As in the proof of Proposition \ref{p:chrom}, we may assume $N\cong M_d(\C)$ for some $d\in\N$ and $k_j=l_jd$ for some $l_j\in\N$  so that 
$$M=\bigoplus_{j=1}^m 1_{n_j}\ten M_{l_j}(\C)\ten M_d(\C)=\bigg(\bigoplus_{j=1}^m 1_{n_j}\ten M_{l_j}(\C)\bigg)\ten M_d(\C).$$
Note that $[M:N]=\sum_{j=1}^ml_j^2$. 

Suppose $\{P_a\}_{a=1}^c\subseteq M\ten L$ is a $qc$-colouring of $(N',M,\mc{B}(H))$. Letting $z_j$ denote the central projection of $M$ onto the $j^{th}$ summand $M_{l_j}(\C)\ten M_d(\C)$, it follows that $\{P_a(z_j\ten 1_L)\}_{a=1}^c\subseteq Mz_j\ten L$ is a $qc$-colouring of $(z_jN'z_j,Mz_j,\mc{B}(z_jH))$, as $(Mz_j)'=M'z_j$ and $X\in z_jN'z_j\cap(M'z_j)^\perp\cap \mc{B}(z_jH)$, implies $X=z_jXz_j\in N'\cap (M')^\perp$, so that
$$P_a(z_j\ten 1_L)((z_jN'z_j\cap(M'z_j)^\perp\cap\mc{B}(z_jH))\ten 1_L)(z_j\ten 1_L)P_a=0.$$
By Lemma \ref{l:chrom}, $R^j_a=l_j^2(\tau_{n_j}\ten\tau_{l_j}\ten\id_d\ten\id_L)((z_j\ten 1)P_a)$ is a projection in $M_d(\C)\ten L$ satisfying $\sum_{a=1}^cR^j_a=l_j^2(1_d\ten 1_L)$ (see also Remark \ref{r:chrom}). Moreover, $R_a^i\perp R_a^j$ when $i\neq j$. To see this, first note that $N'=\mc{B}(\oplus_j \C^{n_j}\ten \C^{l_j})\ten 1_d$, so taking a matrix unit $(e^i_{i_0}\ten e^i_{j_0})(e^j_{k_0}\ten e^j_{l_0})^*\ten 1_d\in N'\cap (M')^\perp$ ($i\neq j$), and writing  
$$P_a(z_j\ten 1_L)=\sum_{k,l=1}^{l_j} 1_{n_j}\ten e^j_{k,l}\ten P^j_{a,k,l},$$
with $P^j_{a,k,l}\in M_d(\C)\ten L$, we have
\begin{align*}
0&=P_a((e^i_{i_0}\ten e^i_{j_0})(e^j_{k_0}\ten e^j_{l_0})^*\ten 1_d\ten 1_L)P_a\\
&=P_a(z_i\ten 1_L)((e^i_{i_0}\ten e^i_{j_0})(e^j_{k_0}\ten e^j_{l_0})^*\ten 1_d\ten 1_L)(z_j\ten 1_L)P_a\\
&=e^i_{i_0}(e^j_{k_0})^*\ten\sum_{k,l=1}^{l_i}\sum_{k',l'=1}^{l_j} e^i_{k,l}e^i_{i_0}(e^j_{l_0})^*e^j_{k',l'}\ten P^i_{a,k,l}P^j_{a,k',l'}\\
&=e^i_{i_0}(e^j_{k_0})^*\ten\sum_{k=1}^{l_i}\sum_{l'=1}^{l_j} e^i_{k}(e^j_{l'})^*\ten P^i_{a,k,i_0}P^j_{a,l_0,l'}.
\end{align*}
Hence, $P^i_{a,k,i_0}P^j_{a,l_0,l'}=0$ for all $k,i_0,l_0,l'$ whenever $i\neq j$, from which the claim $R^i_a\perp R^j_a$ is easily deduced. It follows that $R_a:=\sum_{j=1}^m R^j_a$ is a projection in $M_d(\C)\ten L$ satisfying
$$\sum_{a=1}^c R_a=\sum_{a=1}^c\sum_{j=1}^m R^j_a=\sum_{j=1}^ml_j^2(1_d\ten 1_L)=[M:N]1_d\ten L_1.$$
On the other hand, $1_d\ten 1_L-R_a\geq 0$ so that
$$(c-[M:N])1_d\ten 1_L=\sum_{a=1}^c(1_d\ten 1_L-R_a)\geq0,$$
implying $c\geq [M:N]$. 
\end{proof}

We now combine some of our techniques from previous sections with those of \cite[Theorem 5.9]{bgh} to calculate $\chi_{loc}(M,N',\BLTM)$ for a large class of finite-dimensional inclusions $N\subseteq M$.

\begin{theorem}\label{t:finite} Let $N\subseteq M$ be a strongly Markov inclusion of finite-dimensional von Neumann algebras which admits an orthonormal Pimsner-Popa basis $\{u_i\}$ for $M$ over $N$ in $\mc{N}_M(N)$. Then 
$$\chi_{loc}(M,N',\BLTM)=\chi_{q}(M,N',\BLTM)=\chi_{qc}(M,N',\BLTM)=[M:N].$$ 
\end{theorem}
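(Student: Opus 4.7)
Since $\chi_{qc}\le\chi_q\le\chi_{loc}$ by definition, it suffices to establish the two inequalities $\chi_{loc}(M,N',\BLTM)\le[M:N]$ and $\chi_{qc}(M,N',\BLTM)\ge[M:N]$. My plan for the upper bound is to exhibit an explicit local colouring built from the hypothesised unitary Pimsner-Popa basis together with the Jones projection; for the lower bound I will adapt the strategy of Lemma \ref{l:chrom} to the dual inclusion $M'\subseteq N'$.

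For the upper bound, set $P_i:=u_ie_Nu_i^*$ for $i=1,\dots,d:=[M:N]$ and verify that $(\C,d,\{P_i\})$ is a $loc$-colouring. Each $P_i$ lies in $N'$ since, by Lemma \ref{l:normalise}, $P_i=\pi^r_\tau(u_i)e_N\pi^r_\tau(u_i^*)$ with $\pi^r_\tau(u_i)\in M'\subseteq N'$ and $e_N\in N'$. The normaliser hypothesis forces $E_N(u_i^*u_j)=u_i^*E_N(u_ju_i^*)u_i=\delta_{ij}$, so that $P_iP_j=u_iE_N(u_i^*u_j)e_Nu_j^*=\delta_{ij}P_i$. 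Since $u_iN=Nu_i$ and $\{u_i\}$ is orthonormal, one obtains the direct-sum decomposition $L^2(M,\tau)=\bigoplus_i u_iL^2(N,\tau)$, whence $\sum_iP_i=1$. Finally, for $x\in M\cap N^\perp=\ker E_N$,
\[P_ixP_i = u_iE_N(u_i^*xu_i)e_Nu_i^* = u_i(u_i^*E_N(x)u_i)e_Nu_i^* = 0,\]
using $u_i\in\mc{N}_M(E_N)$ once more. This gives $\chi_{loc}(M,N',\BLTM)\le d$.

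For the lower bound, given any $qc$-colouring $(L,c,\{P_a\}_{a=1}^c)\subseteq N'\ten L$, I set $R_a:=[M:N](E_{M'}\ten\id_L)(P_a)\in M'\ten L$, where $E_{M'}\colon N'\to M'$ is the unique trace-preserving conditional expectation. The relation $\sum_aR_a=[M:N]\cdot 1$ is immediate from $\sum_aP_a=1$, and self-adjointness is clear. The crucial input is that $N$ and $N'$ commute, so $N\ten 1_L$ commutes with every element of $N'\ten L$; combined with the null condition this yields
\[P_a(y\ten 1_L)P_a=(E_N(y)\ten 1_L)P_a, \qquad y\in M,\]
and specialising to $y=u_i$ (with $u_1=1$, which is possible by relabelling) gives $P_a(u_i\ten 1_L)P_a=\delta_{i,1}P_a$. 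Combining this with Pimsner-Popa completeness $1=\sum_iu_i^*e_Nu_i$ and the dual Markov identity $E_{M'}(e_N)=[M:N]^{-1}$ (inherited from $E_M(e_N)=[M:N]^{-1}$ via the anti-iso $J\cdot J\colon M_1\to N'$) should yield $R_a^2=R_a$. Then $c\cdot 1\ge\sum_aR_a=[M:N]\cdot 1$ forces $c\ge[M:N]$.

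The principal obstacle is verifying idempotency $R_a^2=R_a$ without the factor hypothesis of Lemma \ref{l:chrom}, whose proof exploits the matricial decomposition $M=1_d\ten M_m\ten M_n$. My preferred route is to decompose $P_a$ along a Pimsner-Popa basis $\{v_j\}$ of $N'$ over $M'$, which exists by transporting the standard basis of $M_1$ over $M$ (built from $\{u_i\}$ and $e_N$) through $J\cdot J$, and then to exploit the identity $P_a(u_i\ten 1_L)P_a=\delta_{i,1}P_a$ to force the cross-cancellations needed for $R_a^2=R_a$. A fallback is to reduce to the factorial case by cutting with the central projections of $Z(N)\vee Z(M)$ in $\BLTM$, on each block of which the unitary basis hypothesis descends and Theorem \ref{t:chrom}'s dualised argument applies.
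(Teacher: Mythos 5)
Your overall architecture coincides with the paper's: the same two inequalities, the same PVM for the upper bound (the paper takes $P_i=u_i^*e_Nu_i$; your $P_i=u_ie_Nu_i^*$ is the same construction with $u_i$ and $u_i^*$ interchanged, legitimate since you correctly check $E_N(u_i^*u_j)=\delta_{ij}$ from the normaliser property), and the same operator $R_a=[M:N](E_{M'}\ten\id_L)(P_a)$ for the lower bound. Two points in your upper bound deserve attention. First, completeness $\sum_iP_i=1$ needs the dimension count $d\cdot\dim N=\dim M$ (Lemma \ref{l:ONB}, together with $d=[M:N]$, which follows from $1=\tau_1(\sum_iu_i^*e_Nu_i)=d/[M:N]$); your appeal to ``$u_iN=Nu_i$ and orthonormality'' gives pairwise orthogonality of the subspaces $u_iL^2(N,\tau)$ but not that they span. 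Second, and more seriously, you assert $M\cap N^\perp=\Ker(E_N)$ without proof. The orthogonal complement in Definition \ref{d:colour} is taken in $\BLTM$ with respect to its unique tracial state $\tau_d$, and $\tau_d|_M$ is in general \emph{not} $\tau$ (it is the Markov trace for $\C\subseteq M$, whereas $\tau$ is adapted to $N\subseteq M$), so the identification is not automatic: the paper proves it by showing $E_N$ is $\tau_d$-preserving, a computation that itself uses the normaliser hypothesis via the orthonormal basis $\{u_i^*\psi_n\}$ of $L^2(M,\tau)$. Since the entire colouring condition is phrased through $M\cap N^\perp$, this step cannot be skipped.

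The genuine gap is the one you yourself flag: idempotency of $R_a$ is never established, and neither of your two proposed routes works as written. The fallback of cutting by minimal projections of $Z(N)\vee Z(M)$ fails because the unitaries $u_i$ normalise $N$ but need not fix those central projections (they merely permute the minimal projections of $Z(N)$), so the unitary-basis hypothesis does not descend to the blocks; moreover Theorem \ref{t:chrom} concerns the quantum graph $(N',M,\mc{B}(H))$, not $(M,N',\BLTM)$, so ``dualising'' it is itself unproven. The paper closes the gap with one observation you nearly have: by the left-basis version of \cite[Proposition 2.24]{jp}, the trace-preserving conditional expectation is explicitly the average $E_{M'}(y)=[M:N]^{-1}\sum_{i}u_i^*yu_i$, so that $R_a=\sum_i(u_i^*\ten 1_L)P_a(u_i\ten 1_L)$. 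Your relation $P_a(x\ten 1_L)P_a=P_a(E_N(x)\ten 1_L)P_a$, applied to $x=u_iu_j^*$, gives $P_a(u_iu_j^*\ten 1_L)P_a=\delta_{i,j}P_a$ directly from orthonormality --- no normalisation $u_1=1$ is needed (and note that a \emph{relabelling} cannot arrange $u_1=1$; you would have to pass to the modified basis $\{u_iu_1^*\}$, which does remain an orthonormal basis in $\mc{N}_M(N)$). This relation says precisely that the summands $(u_i^*\ten 1_L)P_a(u_i\ten 1_L)$ are mutually orthogonal projections, so $R_a$ is a projection with no further computation, and your counting argument $\sum_aR_a=[M:N]1_{M'}\ten 1_L$, hence $c\geq[M:N]$, then goes through verbatim. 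So the missing ingredient is not a new cancellation scheme along a basis of $N'$ over $M'$, but the averaging formula for $E_{M'}$ over the given unitary basis, which converts idempotency from an obstacle into a triviality.
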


\begin{proof} First, by the Markov property, the cardinality of $\{u_i\}$ is the index $[M:N]$:
$$1=\tau_1(1)=\sum_i\tau_1(u_i^*e_Nu_i)=\sum_i\tau_1(e_N)=\frac{|\{u_i\}|}{[M:N]}.$$
It suffices to show $\chi_{loc}(M,N',\BLTM)\leq[M:N]$ and $\chi_{qc}(M,N',\BLTM)\geq[M:N]$. 

$\underline{\chi_{loc}(M,N',\BLTM)\leq[M:N]}:$ 
Let $\tau_d$ denote the unique tracial state on $\BLTM$.
Recall that $\mc{N}_M(N)=\mc{N}_{M}(E_N)$ (Lemma \ref{l:normalise}), so that each $u_i$ normalises $E_N$, and that $\{u_i^*\psi_n\mid i=1,...,[M:N], \ n=1,...,\dim N\}$ forms an orthonormal basis of $L^2(M,\tau)$ whenever $\{\psi_n\}$ is an orthonormal basis of $L^2(N,\tau)$ (see the proof of Lemma \ref{l:ONB}). It follows that $E_N$ is $\tau_d$-invariant: given $x\in M$, we have
\begin{align*}\tau_d(x)&=\frac{1}{\dim M}\sum_{n=1}^{\dim N}\sum_{i=1}^{[M:N]}\la xu_i^*\psi_n,u_i^*\psi_n\ra\\
&=\frac{1}{\dim M}\sum_{n=1}^{\dim N}\sum_{i=1}^{[M:N]}\la e_Nu_ixu_i^*e_N\psi_n,\psi_n\ra\\
&=\frac{1}{\dim M}\sum_{n=1}^{\dim N}\sum_{i=1}^{[M:N]}\la E_N(u_ixu_i^*)\psi_n,\psi_n\ra\\
&=\frac{1}{\dim M}\sum_{n=1}^{\dim N}\sum_{i=1}^{[M:N]}\la u_iE_N(x)u_i^*\psi_n,\psi_n\ra\\
&=\tau_d(E_N(x)).
\end{align*}
Combined with the faithfulness of $\tau_d$, it follows that
$$M\cap N^\perp=\{x\in M\mid \tau_{d}(xy)=0, \ \forall \ y\in N\}=\{x\in M\mid E_N(x)=0\}=\mathrm{Ker}(E_N).$$

Now, let $P_i:=u_i^*e_Nu_i$. Then $\{P_i\}_{i=1}^{[M:N]}$ is a PVM in $N'$ since $e_N\in N'$, and each $u_i$ normalises $N'$. 
To show that $\{P_i\}_{i=1}^{[M:N]}$ is an $loc$-colouring of the quantum graph $(M,N',\BLTM)$, by (\ref{e:col}), it suffices to show that $P_ixP_i=0$ for all $x$ in  $M\cap ((N')')^\perp=M\cap N^\perp$. But for any $x\in M\cap N^\perp=\mathrm{Ker}(E_N)$,
\begin{align*}P_ixP_i&=(u_i^*e_Nu_i)x(u_i^*e_Nu_i)\\
&=u_i^*e_N(u_i x u_i^*)e_Nu_i\\
&=u_i^*E_N(u_i x u_i^*)e_Nu_i\\
&=u_i^*u_iE_N(x)u_i^*e_Nu_i\\
&=0.
\end{align*}
$\underline{\chi_{qc}(M,N',\BLTM)\geq[M:N]}:$ Suppose $\{P_a\}_{a=1}^c\subseteq N'\ten L$ is a $qc$-colouring with $L$ a tracial von Neumann algebra. Then $P_a(x\ten 1_L)P_a=0$ for all $x\in M\cap N^\perp=\mathrm{Ker}(E_N)$, implying $P_a(x\ten 1_L)P_a=P_a(E_N(x)\ten 1_L)P_a$ for all $x\in M$. In particular,
$$P_a(u_iu_j^*\ten 1_L)P_a=P_a(E_N(u_iu_j^*)\ten 1_L)P_a=\delta_{i,j}P_a.$$
Hence, for each $a$, $\{(u_i^*\ten 1_L)P_a(u_i\ten 1_L)\}_{i=1}^{[M:N]}$ is a family of mutually orthogonal projections in $N'\ten L$. By (the left basis version of) \cite[Proposition 2.24]{jp}, the map
$$E_{M'}:N'\ni y\mapsto\frac{1}{[M:N]}\sum_{i=1}^{[M:N]}u_i^*yu_i\in M'$$
is a conditional expectation (unique with respect to canonical traces on $N'$ and $M'$). Define $R_a:=[M:N](E_{M'}\ten \id_L)P_a$. Then
$$R_a=\sum_{i=1}^{[M:N]}(u_i^*\ten 1_L)P_a(u_i\ten 1_L)$$
is a projection in $M'\ten L$ satisfying $\sum_{a=1}^cR_a=[M:N]1_{M'}\ten 1_L$. But then, 
$$(c-[M:N])1_{M'}\ten 1_L=\sum_{a=1}^c(1_{M'}\ten 1_L-R_a)\geq	0,$$
forcing $c\geq [M:N]$. 
\end{proof}

\begin{remark} Note that Theorem \ref{t:finite} does not contradict \cite[Theorem 5.11]{bgh} which forbids finite local chromatic number for complete quantum graphs of the form $(M_n(\C),M,M_n(\C))$ with $M$ non-abelian. There, the operator system $\mc{S}=M_n(\C)$ coincides with the algebra of bounded operators on the representation space $\C^n$, whereas in Theorem \ref{t:finite}, the operator system $\mc{S}=M$ is not the full algebra $\BLTM$ (unless $M=\C$). 
\end{remark}

\section{Outlook}

In this work, we introduced a model of quantum teleportation in the commuting operator framework, deepened connections with subfactor theory and generalised Werner's characterisation of tight teleportation schemes. Several natural lines of investigation are left for future work, including
\begin{enumerate}
\item rigidity of teleportation for more general inclusions $N\subseteq M$;
\item futher connections with subfactor theory, depth-2 inclusions \cite{nv}, weak $C^*$-Hopf algebras \cite{metal,metal2,nv}, and categorical approaches to quantum teleportation \cite{ac,jlw,lwj}; 
\item superdense coding in the commuting operator framework, building on \cite{huang};
\item colourings of infinite quantum graphs from finite-index inclusions of II$_1$ factors;
\item connections with quantum automorphism groups \cite[\S7]{bevw}.
\end{enumerate}

\vspace{0.1in}

{\noindent}{\it Acknowledgements.} The second author was partially supported by the NSERC Discovery Grant RGPIN-2017-06275, and would like to thank Michael Brannan, Samuel Harris and Ivan Todorov for helpful discussions. The third author was partially supported by the NSERC Discovery Grant RGPIN-2018-400160, and would like to thank Rajesh Pereira for helpful discussions. The authors would also like to thank the anonymous referees for helpful comments which improved the presentation of the paper.

\end{document}